\newtheorem{theorem}{Theorem}
\newtheorem{definition}[theorem]{Definition}
\newtheorem{example}[theorem]{Example}
\newtheorem{lemma}[theorem]{Lemma}
\newtheorem{proposition}[theorem]{Proposition}
\newtheorem{remark}[theorem]{Remark}
\begin{document}

\author{J. J. S\'anchez-Gabites}
\thanks{The author is partially supported by MICINN (grant MTM 2009--07030).}
\address{Facultad de Ciencias Econ{\'{o}}micas y Empresariales. Universidad Aut{\'{o}}noma de Madrid, Campus Universitario de Cantoblanco. 28049 Madrid (Espa{\~{n}}a)}
\email{JaimeJ.Sanchez@uam.es}
\subjclass[2010]{54H20, 37B25, 37E99}

\title[Arcs, balls and spheres that cannot be attractors in $\mathbb{R}^3$]{Arcs, balls and spheres that cannot be attractors in $\mathbb{R}^3$}
\begin{abstract} For any compact set $K \subseteq \mathbb{R}^3$ we define a number $r(K)$ that is either a nonnegative integer or $\infty$. Intuitively, $r(K)$ provides some information on how wildly $K$ sits in $\mathbb{R}^3$. We show that attractors for discrete or continuous dynamical systems have finite $r$ and then prove that certain arcs, balls and spheres cannot be attractors by showing that their $r$ is infinite.
\end{abstract}
\thanks{The author wishes to express his deepest gratitude to Professor Rafael Ortega (Universidad de Granada) for his generous support and encouragement during the writing of this paper}
\maketitle

\section{Introduction}

Many dynamical systems have an \emph{attractor}, which is a compact invariant set $K$ such that every initial condition sufficiently close to $K$ approaches it as the system evolves (see Definition \ref{def:atrac}). Eventually the dynamics of the system will be indistinguishable from the dynamics on the attractor, and in this sense we may say that the attractor captures the long term behaviour of the system (at least for a certain range of initial conditions).
\smallskip

The structure of attractors as subsets of the phase space $M$ is frequently very intrincate, and this leads naturally to the following \emph{characterization problem}:

``characterize \emph{topologically} what compact sets $K \subseteq M$ can be an attractor for a dynamical system on $M$''.

Notice that no conditions are placed on the dynamics on the attractor: this is a very interesting variant of the problem (for instance, it arises in some questions related to partial differential equations; see Section \ref{sec:final}.1), but almost intractable by our present knowledge. Also, the characterization will generally depend on $M$ and whether one is interested in discrete or continuous dynamical systems. The latter are well understood \cite{gunthersegal1, mio3} so we shall restrict ourselves to discrete dynamical systems.
\smallskip

Heuristically, solving the characterization problem requires that we identify the obstructions that prevent a set from being an attractor. For example, attractors have finitely generated \v{C}ech cohomology with $\mathbb{Z}_2$ coefficients \cite{mio4}, which proves that the sets shown in Figure \ref{fig:fig2} cannot be attractors. We could say that this is an \emph{intrinsic} obstruction, in the sense that it only depends on the set $K$.

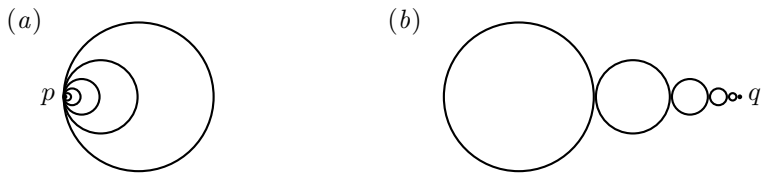
\begin{figure}[h!]
\begin{pspicture}(-1.5,-0.25)(8.2,2.2)
\pscircle(-0.96875,1){0.03125} \pscircle(-0.9375,1){0.0625} \pscircle(-0.875,1){0.125} \pscircle(-0.75,1){0.25} \pscircle(-0.5,1){0.5} \pscircle(0,1){1}
\rput[r](-1.1,1){$p$} \rput(8.1,1){$q$}
\rput(3.5,2){({\it b}\/)} \rput(-1.5,2){({\it a}\/)}
\pscircle(5,1){1} \pscircle(6.5,1){0.5} \pscircle(7.25,1){0.25} \pscircle(7.625,1){0.125} \pscircle(7.8125,1){0.0625} \pscircle(7.90625,1){0.03125}
\end{pspicture}
\caption{Sets that cannot be attractors}
\label{fig:fig2}
\end{figure}

In fact, when $M = \mathbb{R}^2$, the obstruction just mentioned is the only one. If a compact set $K \subseteq \mathbb{R}^2$ has finitely generated \v{C}ech cohomology with $\mathbb{Z}_2$ coefficients then an argument of G\"unther and Segal \cite[Corollary 3, p. 326]{gunthersegal1} shows that it is an attractor for a suitable flow, and consequently also for a homeo{\-}morphism (namely, the time-one map of the flow). This solves the characterization problem for $M = \mathbb{R}^2$: a compact set $K \subseteq \mathbb{R}^2$ is an attractor for a homeomorphism if, and only if, it has finitely generated \v{C}ech cohomology with $\mathbb{Z}_2$ coefficients.
\smallskip

Let us move on to $M = \mathbb{R}^3$, which will be our framework from now on. With the increase in dimension a new phenomenon arises, related to the fact that attractors need to \emph{sit} in phase space in a suitable way. For instance, consider a straight segment $A$ joining two points in $\mathbb{R}^3$. It is easy to prove that $A$ is an attractor for a homeomorphism (Example \ref{ex:segment}), but we will show in Theorem \ref{teo:wild_arc} that it is possible to embed $A$ in some wild fashion so that the resulting arc $A^*$ \emph{cannot} be an attractor any more (a picture of $A^*$ can be seen Figure \ref{fig:wild_1}). Note that $A^*$ is homeomorphic to $A$, so it still has the ``correct'' \emph{intrinsic} topological properties to be an attractor, but the way it sits in $\mathbb{R}^3$ prevents this from happening. In this sense we may say that there are also \emph{extrinsic} obstructions involved in the characterization problem in $\mathbb{R}^3$, standing in contrast with the case of $\mathbb{R}^2$.
\smallskip

Our aim in this paper is not to solve the characterization problem (we are far from being able to do this) but to gain some understanding about it by exploring the phenomenon just described, showing that there are not only arcs $A^*$ that cannot be attractors, but also balls $B^*$ and spheres $S^*$ (Theorems \ref{teo:ball} and \ref{teo:alex}). The interesting point about these results is the technique used in proving them, which can be roughly described as follows. We associate to every compact set $K \subseteq \mathbb{R}^3$ a number $r(K) \in \{0,1,2,\ldots,\infty\}$ that, on intuitive grounds, measures the ``crookedness'' of $K$ as a subset of $\mathbb{R}^3$ (Section \ref{sec:motiva}). Attractors have finite $r$, so we only need to show that there are arcs, balls, and spheres with $r = \infty$. However, $r$ is notoriously difficult to compute, and some powerful tools (the nullity and subadditivity properties of $r$) will be needed in order to complete our plan (Sections \ref{sec:fin} to \ref{sec:sub}).

\subsection*{Requirements} Care has been taken in making the paper as self contained as possible, although some knowledge of singular or simplicial homology and cohomology is required, including the duality theorems of Alexander and Lefschetz. The books by Hatcher \cite{hatcher1} and Munkres \cite{munkres3} contain everything that is needed for our purposes. Throughout the paper we tacitly use $\mathbb{Z}_2$ coefficients as a convenient choice to make some arguments simpler and be able to switch between homology and cohomology, but any other coefficient field would work just as well.

For an abelian group $G$ the notation ${\rm rk}\ G$ means the rank of $G$. In our context $G$ will usually be a homology or cohomology group with $\mathbb{Z}_2$ coefficients (hence a vector space over the field $\mathbb{Z}_2$) and then ${\rm rk}\ G$ agrees with the perhaps more familiar dimension of $G$ as a vector space.

\section{An arc that cannot be an attractor} \label{sec:arcs}

In order to provide the reader with some familiarity with attractors and the computation of $r$ we prove in this section that there exist arcs that cannot be attractors. We begin with some basic definitions.
\smallskip

Let $U \subseteq \mathbb{R}^3$ be open. A \emph{discrete dynamical system} on $U$ is an injective and continuous mapping $f : U \longrightarrow U$. A compact set $K \subseteq U$ is called \emph{invariant} if $f(K) = K$. A compact set $P \subseteq U$ is said to be \emph{attracted} by $K$ if for every neighbourhood $V$ of $K$ there exists $n_0 \in \mathbb{N}$ such that $f^n(P) \subset V$ whenever $n \geq n_0$.

\begin{definition} \label{def:atrac} Let $f : U \longrightarrow U$ be a discrete dynamical system on an open set $U \subseteq \mathbb{R}^3$. A set $K \subseteq U$ is an \emph{attractor} for $f$ if:
\begin{itemize}
	\item[({\it i}\/)] $K$ is compact and invariant,
	\item[({\it ii}\/)] $K$ has a neighbourhood $A \subseteq U$ such that every compact set $P \subseteq A$ is attracted by $K$.
\end{itemize}
\end{definition}

The biggest set $A$ for which condition ({\it ii}\/) is satisfied is called the \emph{basin of attraction} of $K$ and denoted $\mathcal{A}(K)$. It is always an open invariant subset of $U$ and can be alternatively characterized as \[\mathcal{A}(K) = \{p \in U : f^n(p) \longrightarrow K \text{ as } n \longrightarrow \infty\},\] where the condition $f^n(p) \longrightarrow K$ should be understood in the sense that $f^n(p)$ eventually enters (and never leaves again) any prescribed neighbourhood of $K$. An attractor is called \emph{global} if its basin of attraction coincides with all of $U$.
\smallskip

Definition \ref{def:atrac} is pretty standard \cite{hale1,katok1}, although some authors consider only global attractors or minimal attractors. Both restrictions are unnecesary in our context.

\begin{example} \label{ex:trivial} The unit sphere $\mathbb{S}^2 \subseteq \mathbb{R}^3$ is an attractor for the homeomorphism $f : \mathbb{R}^3 \longrightarrow \mathbb{R}^3$ defined as $f(p) := p \sqrt{\|p\|}$, its basin of attraction being $\mathcal{A}(\mathbb{S}^2) = \mathbb{R}^3 - \{0\}$. A very similar construction proves that the closed unit ball $\mathbb{B}^3 \subseteq \mathbb{R}^3$ is a global attractor.
\end{example}

\begin{example} \label{ex:segment} Any straight line segment $L \subseteq \mathbb{R}^3$ is an attractor. To prove this, first let $h : \mathbb{R}^3 \longrightarrow \mathbb{R}^3$ be a homeomorphism that takes $L$ onto the straight line segment $L'$ having endpoints $(-1,0,0)$ and $(1,0,0)$. Consider the homeomorphism $\varphi : \mathbb{R} \longrightarrow \mathbb{R}$ given by \[\varphi(x) := \left\{ \begin{array}{ccc} {\rm sgn}(x)\sqrt{|x|} & \text{ if } & |x| \geq 1 \\ x & \text{ if } & |x| \leq 1 \end{array} \right.\] It is easy to see that $[-1,1]$ is a global attractor for $\varphi$. Then the homeomorphism $f' : \mathbb{R}^3 \longrightarrow \mathbb{R}^3$ defined by $f'(x,y,z) := (\varphi(x),\nicefrac{y}{2},\nicefrac{z}{2})$ has $L'$ as a global attractor, and $f := h^{-1} \circ f' \circ h$ has $L$ as a global attractor.
\end{example}

A \emph{ball} is a set homeomorphic to $\mathbb{B}^3$, a \emph{sphere} is a set homeomorphic to $\mathbb{S}^2$, and an \emph{arc} is a set homeomorphic to a straight line segment in $\mathbb{R}^3$. Examples \ref{ex:trivial} and \ref{ex:segment} imply that, whatever intrinsic obstructions there may be for a set to be an attractor, they are not present for balls, spheres, and arcs.
\smallskip

As mentioned earlier, our goal in this section is to construct an arc that cannot be an attractor. We shall approach this by first constructing an arc that cannot be a \emph{global} attractor, which is a much easier task. The concept of cellularity \cite[p. 35]{daverman1} will play an important role in the sequel:

\begin{definition} A compact set $K \subseteq \mathbb{R}^3$ is \emph{cellular} if it has a neighbourhood basis comprised of balls.
\end{definition}

Consider the arc $F$ shown in Figure \ref{fig:wild_2}. This arc was introduced in a classical paper by Fox and Artin \cite[Example 1.3 and Figure 8, p. 985]{foxartin1}, where it is also shown that $F$ is not cellular.

\begin{figure}[h]
\begin{pspicture}(0,0)(8.5,2)
\rput[bl](0,0){\scalebox{0.5}{\includegraphics{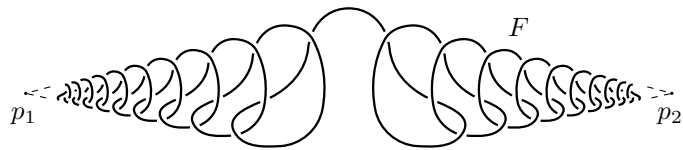}}}
\rput[t](0,0.8){$p_1$} \rput[t](8.5,0.8){$p_2$}
\rput[b](6.5,1.75){$F$}
\end{pspicture}
\caption{The Fox--Artin arc $F$}
\label{fig:wild_2}
\end{figure}

\begin{example} \label{ejem:toy} The arc $F$ shown in Figure \ref{fig:wild_2} cannot be a \emph{global} attractor in $\mathbb{R}^3$.
\end{example}
\begin{proof} We argue by contradiction. Suppose $F$ is an attractor for a continuous and injective map $f : \mathbb{R}^3 \longrightarrow \mathbb{R}^3$. Let $B$ be a ball big enough so that $F$ is contained in its interior and set $N_k := f^k(B)$ for $k \in \mathbb{N}$.

The theorem on invariance of domain \cite[Theorem 36.5, p. 207]{munkres3} guarantees that $f$ is an open map, which implies that each $N_k$ is a neighbourhood of $f^k(F) = F$ (recall that an attractor is required to be invariant). Moreover, since $B$ is a compact subset of the basin of attraction of $F$ because $F$ was assumed to be a global attractor and $F$ attracts compact sets, the $N_k$ get arbitrarily close to $F$ as $k$ increases, so $\{N_k\}$ is a neighbourhood basis of $F$.

Each map $f^k$ is continuous and injective so, again because of the compactness of $B$, the restrictions $f^k|_B : B \longrightarrow f^k(B) = N_k$ are all homeomorphisms. Consequently all the $N_k$ are balls and therefore $F$ should be cellular, which it is not as mentioned earlier. This contradiction finishes the proof.
\end{proof}

This very same argument proves that a global attractor in $\mathbb{R}^3$ must be cellular. The converse is also true: a cellular subset of $\mathbb{R}^3$ is an attractor for some discrete dynamical system; in fact, it is an attractor for a flow \cite[Theorem 2.7, p. 152]{garay1}. This solves the characterization problem for \emph{global} attractors both for continuous and discrete dynamical systems.
\smallskip

Let us now elaborate on the Fox--Artin arc $F$ to construct another arc $A^*$ that cannot be an attractor whatsoever; that is, we do \emph{not} assume now that $A^*$ is a global attractor. The argument of Example \ref{ejem:toy} is no longer useful in this situation because there may not exist a ball $B$ contained in the region of attraction of $A^*$. At this point we need to turn to the invariant $r$ mentioned in the introduction. The precise definition is postponed to the following section, and for now we just enumerate some of its properties (in a slightly informal fashion) which enable us to make some computations and prove that $A^*$ cannot be an attractor.

The first one provides the link between $r(K)$ and dynamics:

\begin{itemize}
	\item the \emph{finiteness property}: if $K$ is an attractor for some dynamical system, then $r(K) < \infty$.
\end{itemize}

Thus to prove that a given compact set $K$ cannot be an attractor it is enough to show that $r(K) = \infty$. This is not easy to do directly, and we need to use some ``geometric'' properties of $r(K)$ as an aid in this task. They are not subtle enough to allow us to compute $r(K)$ exactly, but they suffice to show that $r(K) = \infty$ for certain compacta $K$. These geometric properties are the following:

\begin{itemize}
	\item the \emph{invariance property}: if $K$ and $K'$ are ambient homeomorphic, then $r(K) = r(K')$;
	\item the \emph{nullity property}: if $K$ is an arc or a ball, then $r(K) = 0$ if and only if $K$ is cellular;
	\item the \emph{subadditivity property}: under suitable hypotheses, if $K$ is expressed as the union of two compact sets $K_1$ and $K_2$, then $r(K) \geq r(K_1) + r(K_2)$.
\end{itemize}

Recall that two subsets $K$ and $K'$ of $\mathbb{R}^3$ are said to be \emph{ambient homeomorphic} if there exists a homeomorphism $h : \mathbb{R}^3 \longrightarrow \mathbb{R}^3$ such that $h(K) = K'$. This notion captures the intuitive idea that $K$ and $K'$ sit in $\mathbb{R}^3$ in equivalent ways. Ambient homeomorphic sets are also frequently said to be topologically equivalently embedded, topologically equivalent, or simply equivalent.
\smallskip

Now we can construct an arc $A^*$ and show that it cannot be an attractor.

\begin{theorem} \label{teo:wild_arc} There is an arc $A^* \subseteq \mathbb{R}^3$ that cannot be an attractor.
\end{theorem}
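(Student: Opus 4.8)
The plan is to build $A^*$ as an infinite concatenation of copies of the Fox--Artin arc $F$ (or of the arc $F$ minus one of its tame endpoints), arranged so that infinitely many of these copies sit side by side along $A^*$ in an ``independent'' way, and then to play off the three advertised properties of $r$ against each other. The finiteness property reduces the problem to exhibiting an arc with $r(A^*) = \infty$. The nullity property, together with the fact quoted from Fox--Artin that $F$ is not cellular, tells us that each copy of $F$ has $r(F) = r_0 \geq 1$ (here we use that $F$ is an arc, so ``$r = 0$ iff cellular'' applies, and since $F$ is not cellular we get $r(F) \geq 1$). The subadditivity property will then let us add up these contributions: if $A^*$ contains $n$ disjoint (in the appropriate sense) subarcs each ambient homeomorphic to $F$, then $r(A^*) \geq n \cdot r_0 \geq n$, and letting $n \to \infty$ forces $r(A^*) = \infty$.

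More concretely, I would first fix a standard picture of $F$ sitting in a small cube, with its two tame endpoints on opposite faces, so that copies of $F$ can be strung together end to end — much as one strings together the coils of Figure~\ref{fig:fig2}. Choose a sequence of pairwise disjoint closed cubes $C_1, C_2, \ldots$ in $\mathbb{R}^3$ accumulating at a single point $p_\infty$, and place in each $C_k$ a copy $F_k$ of $F$; arrange the whole configuration (adding short tame connecting arcs between consecutive cubes, and a final tame arc limiting onto $p_\infty$) so that the union $A^* := \{p_\infty\} \cup \bigcup_k (F_k \cup \text{connectors})$ is an arc. The point of the accumulation is precisely to fit infinitely many non-cellular ``defects'' into a single compact arc. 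Then, for each $n$, I would isolate $n$ of the $F_k$'s and present $A^*$ as a union $K_1 \cup K_2$ (or iterate the binary subadditivity $n$ times) in a way that meets the ``suitable hypotheses'' of the subadditivity property, concluding $r(A^*) \ge n \, r(F) \ge n$.

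The main obstacle I anticipate is verifying the ``suitable hypotheses'' under which subadditivity holds — the statement in the excerpt is deliberately informal on this point, so the real work is to arrange the decomposition of $A^*$ so that those hypotheses are genuinely met at each stage (presumably something about how $K_1$ and $K_2$ overlap, e.g. that they meet in a tame arc or a single point, and perhaps a condition ensuring the relevant homology/cohomology pieces are independent). A secondary, more cosmetic obstacle is checking that the infinite concatenation really is an arc — i.e. that the map from $[0,1]$ parametrizing the $F_k$'s, the connectors, and the limit point $p_\infty$ is a continuous bijection, hence a homeomorphism by compactness; this is standard but must be set up carefully so that the copies $F_k$ shrink to $p_\infty$ and no unintended self-intersections or wild behaviour at $p_\infty$ are introduced. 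Once the configuration is pinned down, the invariance property guarantees each $F_k$ contributes exactly $r(F)$, and the computation $r(A^*) = \infty$ follows, so $A^*$ cannot be an attractor by the finiteness property.
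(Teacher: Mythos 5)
Your proposal is correct and follows essentially the same route as the paper: the paper's $A^*$ is exactly an infinite end-to-end concatenation of shrinking copies of $F$ accumulating at a point $p_\infty$, and the paper likewise iterates the binary subadditivity along the decompositions $A^* = F_1 \cup p_2p_\infty$, $p_2p_\infty = F_2 \cup p_3p_\infty$, etc., using invariance and nullity to get $r(F_i) \geq 1$ and finiteness to conclude. The "suitable hypotheses" you worry about are met just as you guessed — consecutive pieces meet in a single point, which is a tame decomposition with trivial $\check{H}^1$ of the intersection.
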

\begin{proof} Let $F_1 := F$, the Fox--Artin arc considered earlier, with endpoints $p_1$ and $p_2$. Place another copy of $F$, scaled down by a factor of $\frac{1}{2}$, next to $F_1$. Its left endpoint coincides with $p_2$; its right endpoint is denoted $p_3$. Denote this copy $F_2$ and continue in the same fashion adding more copies $F_3, F_4, \ldots$ as in Figure \ref{fig:wild_1}. The right endpoints of these arcs accumulate at a point $p_{\infty}$. Let $A^* := \bigcup_{n=1}^{\infty} F_n \cup p_{\infty}$. It is clear that $A^*$ is an arc with endpoints $p_1$ and $p_{\infty}$.

\begin{figure}[h]
\begin{pspicture}(0,0)(12.5,1.6)
\rput[bl](0,0){\scalebox{0.5}{\includegraphics{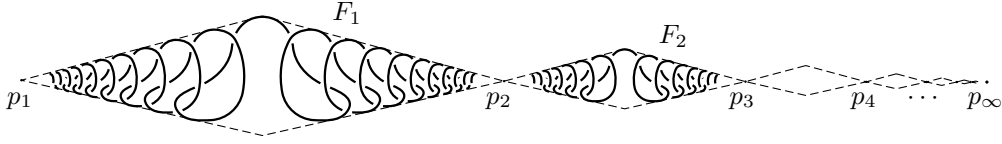}}}
\rput[t](0,0.6){$p_1$} \rput[t](6.3,0.6){$p_2$} \rput[t](9.5,0.6){$p_3$} \rput[t](11.1,0.6){$p_4$} \rput[t](11.9,0.6){$\cdots$} \rput[t](12.7,0.6){$p_{\infty}$}
\rput[b](4.3,1.5){$F_1$} \rput[lb](8.4,1.2){$F_2$}
\end{pspicture}
\caption{The arc $A^*$}
\label{fig:wild_1}
\end{figure}

By the finiteness property, in order to prove that $A^*$ cannot be an attractor it is enough to show that $r(A^*) = \infty$. Express $A^*$ as the union $A^* = F_1 \cup p_2 p_{\infty}$, where $p_2 p_{\infty}$ denotes the subarc of $A^*$ comprised between $p_2$ and $p_{\infty}$. The subadditivity property implies that $r(A^*) \geq r(F_1) + r(p_2 p_{\infty})$. Repeating the same process with $p_2 p_{\infty}, p_3p_{\infty},\ldots$ we see inductively that \[r(A^*) \geq r(F_1) + r(F_2) + \ldots + r(F_s) + r(p_{s+1}p_{\infty})\] for any $s \geq 0$. Since $r$ always takes nonnegative values, \[r(A^*) \geq r(F_1) + r(F_2) + \ldots + r(F_s).\]

Each of the arcs $F_i$ is a scaled down copy of $F$; hence they are all ambient homeomorphic to each other and therefore $r(F_i) = r(F)$ by the invariance property. We mentioned earlier that $F$ is not cellular so by the nullity property we have $r(F) \geq 1$; consequently $r(F_i) \geq 1$ for every $i$ too. Thus $r(A^*) \geq s$. Since $s$ was arbitrary, we conclude that $r(A^*) = \infty$.
\end{proof}

There is nothing special about using copies of the same arc $F$ in the construction of $A^*$ in Theorem \ref{teo:wild_arc}; it is just the simplest choice. Placing whatever arcs $A_i$ with endpoints $p_i$ and $p_{i+1}$ in each of the diamond shaped regions of Figure \ref{fig:wild_1} we see from the subadditivity property that \[r(A^*) \geq \sum_{i=1}^{\infty} r(A_i)\] and so if infinitely many of the $A_i$ are noncellular (thus they satisfy $r(A_i) \geq 1$) then $r(A^*) = \infty$, which implies that $A^*$ cannot be an attractor. It is not difficult to see that this construction can be used to produce uncountably many non ambient homeomorphic arcs (so they are all ``different'') none of which can be an attractor.

\section{The definition of $r(K)$} \label{sec:motiva}

Our definition of $r(K)$ is suggested by the argument of Example \ref{ejem:toy}. The key idea there was that if the Fox--Artin arc $F$ were an attractor, then by picking a ball $B$ that is a neighbourhood of $F$ contained in its region of attraction and iterating $B$ with the map $f$ we would obtain a neighbourhood basis of the arc comprised of balls, and we know that such a basis does not exist because the arc is not cellular. We also mentioned that in general this argument breaks down because the existence of the ball $B$ is not guaranteed if the basin of attraction of the presumed attractor is not assumed to be all of $\mathbb{R}^3$.
\smallskip

Let us now deal informally (details will be given throughout this section) with the general case, where no assumption is placed on $\mathcal{A}(K)$. If $K$ is an attractor for $f$ then, since $\mathcal{A}(K)$ is an open neighbourhood of $K$, we can find \emph{some} neighbourhood of $K$ that is a compact $3$--manifold (with boundary) $N$ contained in $\mathcal{A}(K)$. Iterating $N$ we obtain a neighbourhood basis $\{N_k\}$ of $K$ comprised of compact manifolds, \emph{all homeomorphic to each other}. So in order to find a compact set $K$ that cannot be an attractor we need to find a set that does \emph{not} have such a neighbourhood basis. For the sake of brevity we call a neighbourhood basis of $K$ whose members are compact manifolds an $m$--neighbourhood basis.
\smallskip

The simplest way to tell that not all the members of an $m$--neighbourhood basis $\{N_k\}$ are homeomorphic is by comparing their homology groups. For instance, we may compare the ranks of their first homology groups with coefficients in $\mathbb{Z}_2$. These ranks are finite because the $N_k$ are compact manifolds. Thus, given an $m$--neighbourhood basis $\{N_k\}$ of $K$, we can consider the limit inferior \[\liminf_{k \rightarrow \infty} \ {\rm rk}\ H_1(N_k;\mathbb{Z}_2)\] and if this limit is $\infty$ then not all the $\{N_k\}$ are homeomorphic (otherwise their first homology groups would all have the same finite rank $r$ and the limit inferior would be precisely this number $r$).
\smallskip

Finally, if we want to show that $K$ cannot be an attractor then the limit inferior described above has to be infinite for every $m$--neighbourhood basis $\{N_k\}$, so we define \[r(K) := \inf\left\{ \liminf_{k \rightarrow \infty}\ {\rm rk}\ H_1(N_k;\mathbb{Z}_2) : \{N_k\} \text{ is an $m$--neighbourhood basis of } K\right\}\] and conclude that if $r(K) = \infty$, then $K$ cannot be an attractor whatsoever. The reader may recognise this as an equivalent statement of the finiteness property that was introduced in the previous section.
\smallskip

The computation of $r(K)$ is usually very difficult. Because it is an infimum of a set of numbers it is generally easy to bound from above, but hard to bound from below. In this paper we are interested in showing that certain sets have infinite $r$, which corresponds exactly to the hard lower bounds problem that we just mentioned. This is why we need the nullity and subadditivity properties (notice that both bound $r(K)$ from below). These follow from some ``woodworking'' constructions with neighbourhoods of $K$, which requires us to work with polyhedra.

\subsection{Polyhedra in $\mathbb{R}^3$ \label{sec:poliedros}} We include here a very quick review of some piecewise linear topology in $\mathbb{R}^3$, loosely following the books by Hudson \cite{hudson1} and Moise \cite{moise2} but tailoring everything to our fairly modest needs.

A \emph{$d$--dimensional simplex} ($0 \leq d \leq 3$) in $\mathbb{R}^3$ is the convex hull $\sigma$ of $d+1$ affinely independent points in $\mathbb{R}^3$, called the \emph{vertices} of $\sigma$. A \emph{proper face} of $\sigma$ is a simplex spanned by some, but not all, of the vertices of $\sigma$. According to this, a $0$--simplex is just a point (with no proper faces). A $1$--simplex is a segment, whose proper faces are its endpoints. A $2$--simplex is a triangle, whose proper faces are its vertices and edges. And a $3$--simplex is a tetrahedron, whose proper faces are its vertices, edges and faces (in the elementary geometry sense of the word).

Although very restrictive, the following definition will be enough for our purposes: a \emph{polyhedron} $P \subseteq \mathbb{R}^3$ is the union of a finite collection of simplices, not necessarily all of the same dimension. Therefore, for us, polyhedra are compact. It is important to distinguish between a polyhedron, which is a subset of $\mathbb{R}^3$, and its expression as a finite union of simplices. The latter is clearly not unique and may be very complicated. However, one can prove that any polyhedron admits especially neat expressions as finite unions of simplices, called triangulations: a finite collection of simplices $\{\sigma_i : i \in I\}$ is called a \emph{triangulation} of the polyhedron $P$ if $P = \bigcup_{i \in I} \sigma_i$ and any two different $\sigma_i$ and $\sigma_j$ are either disjoint or meet in a proper face. For instance, any two triangles ($2$--simplices) in a triangulation have to be either disjoint, meet in a single vertex or meet along a single edge.

Because of the way they are constructed, polyhedra sit nicely (piecewise linearly) in Euclidean space, making them especially suitable for geometric arguments. They also have the following useful properties, which are immediate consequences of the definition or the existence of triangulations:
\begin{itemize}
	\item finite intersections and unions of polyhedra are, again, polyhedra;
	\item polyhedra have finitely generated homology and cohomology groups.
\end{itemize}

An extremely useful tool in piecewise linear topology is the theory of regular neighbourhoods \cite[p. 57ff.]{hudson1}. If $P$ is a polyhedron in $\mathbb{R}^3$, then it has arbitrarily small neighbourhoods $N$, called \emph{regular neighbourhoods}, with the following properties: ({\it i}\/) $N$ is a polyhedron, ({\it ii}\/) $N$ is a $3$--manifold with boundary, ({\it iii}\/) the inclusion $P \subseteq N$ is a homotopy equivalence.
\medskip

The following result is very easy to prove and probably well known to the reader.

\begin{lemma} \label{lem:easy} Let $K \subseteq \mathbb{R}^3$ be compact. Then $K$ has arbitrarily small neighbourhoods $N$ that are polyhedral $3$--manifolds.
\end{lemma}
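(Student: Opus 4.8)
The plan is to start from an arbitrary compact set $K \subseteq \mathbb{R}^3$ and cover it by small open cubes whose union $U$ is a neighbourhood of $K$; taking a finite subcover gives a neighbourhood of $K$ whose closure is a polyhedron $P$ with $K \subseteq \operatorname{int} P$. The first subtlety is that $P$ need not be a $3$--manifold (two cubes may meet along an edge or a vertex only). To fix this I would replace $P$ by a \emph{regular neighbourhood} $N$ of $P$ in $\mathbb{R}^3$, invoking the theory of regular neighbourhoods quoted just above: such an $N$ is a polyhedron, is a $3$--manifold with boundary, and can be taken arbitrarily small. Since $K \subseteq \operatorname{int} P \subseteq \operatorname{int} N$, the set $N$ is the desired polyhedral $3$--manifold neighbourhood.

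In more detail, the key steps in order are: (i) fix $\varepsilon > 0$; for each point $p \in K$ choose a rational cube (or a small simplex) of diameter $< \varepsilon/2$ containing $p$ in its interior; (ii) by compactness extract finitely many of them, say $C_1,\ldots,C_m$, and set $P := \bigcup_{i=1}^m C_i$, a polyhedron with $K \subseteq \bigcup \operatorname{int} C_i \subseteq \operatorname{int} P$ and $\operatorname{diam}$ of each component controlled; (iii) take a regular neighbourhood $N$ of $P$ in $\mathbb{R}^3$ small enough that $N$ is contained in the $\varepsilon$--neighbourhood of $K$ — this is possible because regular neighbourhoods can be chosen arbitrarily small and $P$ itself lies within $\varepsilon/2$ of $K$; (iv) conclude from properties (i) and (ii) of regular neighbourhoods that $N$ is a polyhedral $3$--manifold with boundary, and from $K \subseteq \operatorname{int} P \subseteq \operatorname{int} N$ that it is a neighbourhood of $K$. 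Letting $\varepsilon \to 0$ produces arbitrarily small such neighbourhoods.

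I expect the only real point requiring care — hardly an "obstacle" — to be the passage from the polyhedron $P$ (which is genuinely not a manifold in general) to the manifold $N$; this is exactly what regular neighbourhood theory is designed to handle, so citing \cite[p.~57ff.]{hudson1} suffices and no hands-on construction is needed. Everything else is the standard compactness argument. An alternative, if one wished to avoid quoting regular neighbourhoods at full strength, would be to first take a fine triangulation $T$ of all of $\mathbb{R}^3$ adapted so that $K$ lies in the interior of the union of the closed simplices of $T$ meeting a small neighbourhood of $K$, and then let $N$ be the closed second-derived (barycentric) neighbourhood of the full subcomplex spanned by those simplices; the second-derived neighbourhood of a subcomplex in a combinatorial $3$--manifold is well known to be a $3$--manifold with boundary, giving the same conclusion. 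Either route yields the lemma.
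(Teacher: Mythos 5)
Your argument is correct, and it is essentially the intended one: the paper itself omits the proof (declaring the lemma easy), but the regular-neighbourhood machinery it quotes immediately beforehand is exactly what you invoke, and your cube-cover-plus-regular-neighbourhood construction (or the equivalent second-derived-neighbourhood variant) fills the gap without issue. The only point worth stating explicitly is the small-diameter bookkeeping you already do, namely that the cubes meet $K$ and have diameter $<\varepsilon/2$ and the regular neighbourhood is chosen within $\varepsilon/2$ of $P$, so that $N$ lies in any prescribed neighbourhood of $K$.
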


{\bf Terminology.} In the sequel we use the expressions $p$--neighbourhood, $m$--neighbourhood and $pm$--neighbourhood to denote neighbourhoods that are polyhedra, manifolds, or polyhedral manifolds respectively. The terms $p$--neighbourhood basis, $m$--neighbour{\-}hood basis and $pm$--neighbourhood basis have an analogous meaning which should be clear for the reader.

\subsection{The definition of $r(K)$} Although we started with a problem in dynamics, the definition of $r(K)$ is purely topological. From this point of view, $r(K)$ somehow measures the ``crookedness'' of the embedding of $K$ in $\mathbb{R}^3$: we approximate $K$ with nice neighbourhoods $N_k$ and keep track of how complicated these neighbourhoods have to become in order to trace $K$ ever more closely. This is the idea that we intended to highlight in the actual definition of $r(K)$ given below (Definition \ref{defn:rdek}), which is different from the one obtained above but equivalent to it, as shown at the end of this section.

For $r$ a nonnegative integer, consider the following property that $K$ may or may not have: \[(P_r) : \text{ $K$ has arbitrarily small $p$--neighbourhoods $N$ with } {\rm rk}\ H_1(N) \leq r.\]

\begin{definition} \label{defn:rdek} $r(K)$ is the smallest nonnegative integer $r$ for which $(P_r)$ holds, or $\infty$ if $(P_r)$ does not hold for any $r$.
\end{definition}

Instead of defining $r(K)$ in terms of $p$--neighbourhoods of $K$ we could have chosen $m$--neighbourhoods or even $pm$--neighbourhoods. It is a remarkable fact that all three choices lead to the same result:

\begin{proposition} \label{prop:equivalentes} Let $K \subseteq \mathbb{R}^3$ be compact. Then, for any nonnegative integer $r$ the following are equivalent:
\begin{enumerate}
	\item[({\it i}\/)] $K$ has arbitrarily small $p$--neighbourhoods $N$ with ${\rm rk}\ H_1(N) \leq r$,
	\item[({\it ii}\/)] $K$ has arbitrarily small $m$--neighbourhoods $N$ with ${\rm rk}\ H_1(N) \leq r$,
	\item[({\it iii}\/)] $K$ has arbitrarily small $pm$--neighbourhoods $N$ with ${\rm rk}\ H_1(N) \leq r$.
\end{enumerate}
Consequently $r(K)$ can be characterized as the least nonnegative integer (possibly $\infty$) for which any one of ({\it i}\/), ({\it ii}\/) or ({\it iii}\/) holds.
\end{proposition}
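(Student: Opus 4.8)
The plan is to prove the implications $(i)\Rightarrow(iii)$ and $(ii)\Rightarrow(iii)$; since a $pm$--neighbourhood is simultaneously a $p$--neighbourhood and an $m$--neighbourhood, the implications $(iii)\Rightarrow(i)$ and $(iii)\Rightarrow(ii)$ are trivial, and then all three statements are equivalent and the final sentence follows. The implication $(i)\Rightarrow(iii)$ is straightforward via regular neighbourhoods: given an open $U\supseteq K$, use $(i)$ to pick a $p$--neighbourhood $N\subseteq U$ of $K$ with ${\rm rk}\,H_1(N)\le r$, and take a regular neighbourhood $N'$ of $N$ small enough to lie in $U$; then $N'$ is a polyhedral $3$--manifold, it is still a neighbourhood of $K$ because $K\subseteq{\rm int}\,N\subseteq{\rm int}\,N'$, and the inclusion $N\hookrightarrow N'$ is a homotopy equivalence so ${\rm rk}\,H_1(N')={\rm rk}\,H_1(N)\le r$. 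As $U$ was arbitrary, $(iii)$ holds; in particular $(i)$ and $(iii)$ are already equivalent.

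The substantial point is $(ii)\Rightarrow(iii)$. Let $U\supseteq K$ be open; by $(ii)$ choose an $m$--neighbourhood $M$ of $K$, as small as we wish (so that $M$ lies comfortably inside $U$), with ${\rm rk}\,H_1(M)\le r$. By invariance of domain ${\rm int}\,M$ is open in $\mathbb{R}^3$, and by the collar theorem $\partial M$ has a collar $e\colon\partial M\times[0,1)\hookrightarrow M$; shrinking this collar if necessary we may assume its image is disjoint from $K$, so $M_1:=M\smallsetminus e\big(\partial M\times[0,\tfrac12)\big)$ is a compact $3$--manifold, homeomorphic to $M$ via the collar retraction, with $K\subseteq{\rm int}\,M_1$ and $M_1\hookrightarrow M$ a homotopy equivalence. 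I would now replace the possibly wildly embedded frontier $\partial M_1$ by a \emph{polyhedral} surface $\Sigma$ lying in the open collar region $e\big(\partial M\times(0,\tfrac12)\big)$, parallel there to $\partial M_1$, and let $P$ be the closure of the bounded component of $\mathbb{R}^3\smallsetminus\Sigma$ that contains ${\rm int}\,M_1$. Then $P$ is a polyhedron and a $3$--manifold; it contains $K$ in its interior; it is contained in $M$ (hence in $U$) because $\Sigma\subseteq M$ and the unbounded side of $\Sigma$ contains $\mathbb{R}^3\smallsetminus M$; and, since $\Sigma$ is parallel to $\partial M_1$ inside the collar, $P$ is homeomorphic to $M_1$, hence to $M$, so that ${\rm rk}\,H_1(P)={\rm rk}\,H_1(M)\le r$. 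As $U$ was arbitrary, $(iii)$ follows.

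The requirement that $\Sigma$ be \emph{parallel} to $\partial M_1$ --- equivalently, that $P$ be homeomorphic to $M$ rather than merely sandwiched between two homotopy--equivalent copies of it --- is essential and not a technicality. If one only knew $M_1\subseteq P\subseteq M$ with $M_1\hookrightarrow M$ a homotopy equivalence, then $H_1(M_1)\to H_1(P)$ would be split injective and $H_1(P)\to H_1(M)$ surjective, giving ${\rm rk}\,H_1(P)={\rm rk}\,H_1(M)+{\rm rk}\,\ker\!\big(H_1(P)\to H_1(M)\big)$, and when $M$ sits wildly in $\mathbb{R}^3$ a carelessly chosen polyhedral neighbourhood --- for instance a generic one produced by Lemma \ref{lem:easy} --- may well have arbitrarily many ``extra'' $1$--cycles, i.e. an arbitrarily large kernel. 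Excluding this is exactly what makes the construction of $\Sigma$ the crux of the argument, and it is where genuine $3$--dimensional topology is needed: one triangulates $M$ by Moise's theorem and then, working inside the collar, produces the parallel polyhedral surface by means of the classical approximation and taming results for surfaces in $3$--manifolds, in the spirit of Bing's approximation theorems. I expect obtaining $\Sigma$ to be the main obstacle; everything else is bookkeeping.
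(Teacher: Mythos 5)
Your trivial implications and your $(i)\Rightarrow(iii)$ argument via regular neighbourhoods coincide with the paper's proof. Where you diverge is $(ii)\Rightarrow(iii)$: the paper settles this step in a few lines by quoting Moise's approximation theorem for compact $3$--manifolds in $\mathbb{R}^3$ --- there is an embedding $h\colon N\to\mathbb{R}^3$ moving points less than $\varepsilon$ with $h(N)$ a polyhedron --- so that for $\varepsilon$ small $h(N)$ is still a neighbourhood of $K$ inside $U$, is homeomorphic to $N$, and has the same ${\rm rk}\,H_1$. You replace this by a collar push-in $M_1$ followed by the construction of a polyhedral surface $\Sigma$ \emph{parallel} to $\partial M_1$ inside the collar, and you correctly isolate parallelism (rather than mere closeness) as the point that controls ${\rm rk}\,H_1(P)$.

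The gap is that this parallel $\Sigma$ --- which you yourself call the main obstacle --- is not delivered by the tools you name. Bing's approximation theorem applies to arbitrary, possibly wild, surfaces and only yields a nearby homeomorphic polyhedral copy, with no ambient control and no product region in between; parallelism can genuinely fail, e.g.\ there is no polyhedral sphere parallel to the Alexander sphere inside its non--simply connected complementary domain (a product region between them would exhibit the closure of that domain as a ball). Likewise, triangulating $M$ by Moise's triangulation theorem is beside the point: an intrinsic triangulation of $M$ says nothing about polyhedrality with respect to the linear structure of $\mathbb{R}^3$, which is what the definition of a $p$--neighbourhood requires. What actually saves your construction is a feature you did not use: the level surface $\partial M_1=e\bigl(\partial M\times\{\tfrac12\}\bigr)$ lies in ${\rm int}\,M$ (open by invariance of domain) and is \emph{bicollared} in $\mathbb{R}^3$, hence locally flat, and in dimension $3$ locally flat surfaces are $\varepsilon$--tame (Bing's $1$--ULC taming criterion); applying such an $\varepsilon$--taming homeomorphism of $\mathbb{R}^3$ and setting $P:=$ the image of $M_1$ gives the required polyhedral $3$--manifold directly, with no separate discussion of parallelism or of which complementary component of $\Sigma$ to take (a point that anyway needs care when $M$ is disconnected). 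So your route can be completed, but only by invoking a taming theorem at least as deep as --- and less off-the-shelf than --- the Moise re-embedding theorem the paper uses; as written, the decisive step is missing.
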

\begin{proof} Implications ({\it iii}\/) $\Rightarrow$ ({\it i}\/) and ({\it iii}\/) $\Rightarrow$ ({\it ii}\/) are trivial. If we prove their converses the equivalence of all three will be established.

({\it i}\/) $\Rightarrow$ ({\it iii}\/) Let $U$ be a neighbourhood of $K$ and choose a $p$--neighbourhood $N$ of $K$ contained in $U$ with ${\rm rk}\ H_1(N) \leq r$. Now let $N'$ be a regular neighbourhood of $N$ small enough to be still contained in $U$. We know that $N$ is a polyhedral manifold; moreover, since the inclusion $N \subseteq N'$ is a homotopy equivalence it follows that $H_1(N) = H_1(N')$ and therefore ${\rm rk}\ H_1(N') \leq r$. Thus we see that ({\it iii}\/) holds.

({\it ii}\/) $\Rightarrow$ ({\it iii}\/) Let $U$ be a neighbourhood of $K$ and choose an $m$--neighbourhood $N$ of $K$ contained in $U$ with ${\rm rk}\ H_1(N) \leq r$. Now we invoke a very deep theorem of Moise \cite[Theorem 1, p. 253]{moise2}:

\smallskip
{\it Theorem.} (Moise) Let $N \subseteq \mathbb{R}^3$ be a compact $3$--manifold. Let $\varepsilon > 0$ be given. There is an embedding $h : N \longrightarrow \mathbb{R}^3$ that moves points less than $\varepsilon$ (that is, the distance from $p$ to $h(p)$ is less than $\varepsilon$ for every $p \in N$) and such that $h(N)$ is a polyhedron.
\smallskip

Let $h : N \longrightarrow \mathbb{R}^3$ be the embedding provided by the theorem of Moise and let $N' := h(N)$, which is a polyhedron. Choosing $\varepsilon$ small enough guarantees that $N'$ is still a neighbourhood of $K$ contained in $U$. Since $N'$ is homeomorphic to $N$, it is a manifold and, moreover, $H_1(N') = H_1(N)$ so that ${\rm rk}\ H_1(N') \leq r$. Therefore ({\it iii}\/) holds.
\end{proof}

In the sequel we will use $p$--neighbourhoods, $m$--neighbourhoods and $pm$--neigh{\-}bour{\-}hoods interchangeably to compute $r(K)$.

\begin{example} \label{ejem:cell} If $K$ is cellular, then it has arbitrarily small neighbourhoods $B$ that are balls. Thus ({\it ii}\/) in Proposition \ref{prop:equivalentes} holds with $r = 0$, and so $r(K) = 0$.
\end{example}

To close this section we find an expression of $r(K)$ in terms of neighbourhoods bases of $K$. This will be useful later on, when we establish the subadditivity property. 

\begin{proposition} \label{prop:easy1} Let $K \subseteq \mathbb{R}^3$ be compact. Then:
\begin{enumerate}
	\item[({\it i}\/)] for every $pm$--neighbourhood basis $\{N_k\}$ of $K$ \[r(K) \leq \liminf_{k \rightarrow \infty} {\rm rk}\ H_1(N_k),\]
	\item[({\it ii}\/)] if $r(K) < \infty$, then $K$ has a $pm$--neighbourhood basis $\{N_k\}$ such that \[{\rm rk}\ H_1(N_k) = r(K)\ \ \text{for every } k.\]
\end{enumerate}

As a consequence, $r$ admits the expression \[r(K) = \inf\ \Bigl\{\liminf_{k \rightarrow \infty} {\rm rk}\ H_1(N_k) : \{N_k\} \text{ is a $pm$--neighbourhood basis of $K$}\Bigr\}.\]
\end{proposition}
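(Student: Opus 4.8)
The plan is to prove (i) and (ii) directly from Definition~\ref{defn:rdek} and Proposition~\ref{prop:equivalentes}, and then to read off the displayed formula as a formal consequence. For (i), suppose $\{N_k\}$ is a $pm$--neighbourhood basis of $K$ and put $\ell:=\liminf_{k\to\infty}{\rm rk}\,H_1(N_k)$; we may assume $\ell<\infty$ (the claim being trivial otherwise). Because the ranks are nonnegative integers, $\ell<\infty$ forces infinitely many indices $k$ with ${\rm rk}\,H_1(N_k)\le\ell$ — otherwise the $\liminf$ would be at least $\ell+1$. The corresponding sub-collection is again a neighbourhood basis of $K$ (it still gets arbitrarily close to $K$), and every one of its members is a polyhedron $N$ with ${\rm rk}\,H_1(N)\le\ell$. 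Hence property $(P_\ell)$ holds, so $r(K)\le\ell$ by Definition~\ref{defn:rdek}.

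The crux is (ii). Write $r:=r(K)<\infty$. By Definition~\ref{defn:rdek} property $(P_r)$ holds, and — this is the point where minimality of $r(K)$ is used — if $r\ge 1$ then $(P_{r-1})$ fails, i.e.\ there is a neighbourhood $U_0$ of $K$ such that every $p$--neighbourhood $N\subseteq U_0$ has ${\rm rk}\,H_1(N)\ge r$ (when $r=0$ this holds vacuously, with $U_0$ arbitrary). Now take a decreasing sequence of neighbourhoods $U_0\supseteq U_1\supseteq U_2\supseteq\cdots$ of $K$ with $\bigcap_k U_k=K$, for instance $U_k=U_0\cap\{x:d(x,K)<1/k\}$. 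Using the equivalence (i)$\Leftrightarrow$(iii) of Proposition~\ref{prop:equivalentes}, property $(P_r)$ may be realised by $pm$--neighbourhoods, so for each $k$ we may choose a $pm$--neighbourhood $N_k\subseteq U_k$ of $K$ with ${\rm rk}\,H_1(N_k)\le r$. Since also $N_k\subseteq U_0$, the reverse inequality ${\rm rk}\,H_1(N_k)\ge r$ holds as well, so ${\rm rk}\,H_1(N_k)=r$ for every $k$; and $\{N_k\}$ is a $pm$--neighbourhood basis because $N_k\subseteq U_k$ and $\bigcap_k U_k=K$. This is exactly (ii).

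Finally, a $pm$--neighbourhood basis of $K$ exists at all by Lemma~\ref{lem:easy} (choose polyhedral $3$--manifolds $N_k\subseteq\{x:d(x,K)<1/k\}$), so the infimum in the displayed formula is taken over a nonempty set. Part (i) gives $r(K)\le\liminf_{k\to\infty}{\rm rk}\,H_1(N_k)$ for every such basis, hence $r(K)\le\inf\{\cdots\}$. Conversely, if $r(K)<\infty$ the basis built in (ii) achieves $\liminf_{k\to\infty}{\rm rk}\,H_1(N_k)=r(K)$, while if $r(K)=\infty$ part (i) forces every member of the set to equal $\infty$; either way $\inf\{\cdots\}\le r(K)$, and equality follows. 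I expect the only genuinely delicate point to be the step in (ii) where minimality of $r(K)$ is converted into a single neighbourhood $U_0$ supplying a uniform lower bound on the ranks of all polyhedral neighbourhoods it contains — without it one gets only ${\rm rk}\,H_1(N_k)\le r(K)$, not equality — together with the routine but necessary observation that passing to a cofinal sub-collection of a neighbourhood basis leaves it a neighbourhood basis.
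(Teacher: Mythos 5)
Your proof is correct and follows essentially the same route as the paper: part (i) by passing to a cofinal subfamily realising the liminf and invoking property $(P_\ell)$, part (ii) by combining $(P_r)$ (via Proposition~\ref{prop:equivalentes}) with the failure of $(P_{r-1})$, which pins the rank of all sufficiently small $pm$--neighbourhoods at exactly $r(K)$. Your explicit handling of the $r=0$ case and of the nonemptiness of the family of $pm$--neighbourhood bases are small additions the paper leaves tacit, but the argument is the same.
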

\begin{proof} ({\it i}\/) Denote $r := \liminf_{k \longrightarrow \infty} {\rm rk}\ H_1(N_k)$. If $r = \infty$ there is nothing to prove, so we assume $r < \infty$. Since $r$ is the limit inferior of a sequence of nonnegative integers, there exists a subsequence $N_{k_{\ell}}$ of $N_k$ such that ${\rm rk}\ H_1(N_{k_\ell}) = r$ for every $\ell$. Now, clearly $\{N_{k_{\ell}}\}$ is still a $pm$--neighbourhood basis of $K$, and so $K$ has property $(P_r)$. Therefore $r(K) \leq r$.
\smallskip

({\it ii}\/) Let $r = r(K)$. $K$ has property $(P_r)$, so from Proposition \ref{prop:equivalentes} we see that $K$ has a $pm$--neighbourhood basis $\{N_k\}$ with ${\rm rk}\ H_1(N_k) \leq r$ for every $k$. However, $K$ does \emph{not} have property $(P_{r-1})$, and consequently every sufficiently small $pm$--neighbourhood $N$ of $K$ must satisfy ${\rm rk}\ H_1(N) \geq r$. Thus for $k$ big enough we have ${\rm rk}\ H_1(N_k) = r$ and the claim is proved.
\end{proof}

\section{The finiteness property} \label{sec:fin}

The following lemma is almost trivial but important, because it contains the observation that provides the link between dynamics and the invariant $r(K)$.

\begin{lemma} \label{lem:trivial} Let $K$ be an attractor for a discrete dynamical system $f : U \longrightarrow U$, where $U$ is an open subset of $\mathbb{R}^3$. Then $K$ has an $m$--neighbourhood basis $\{N_k\}$ with all the $N_k$ homeomorphic to each other.
\end{lemma}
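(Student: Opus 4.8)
The plan is to exploit the fact that the basin of attraction $\mathcal{A}(K)$ is an open neighbourhood of $K$, together with Lemma \ref{lem:easy}, to produce a single ``seed'' neighbourhood that is already a compact $3$--manifold, and then to iterate $f$ to generate the whole basis. First I would recall from Definition \ref{def:atrac} and the discussion following it that $\mathcal{A}(K)$ is open and invariant, and that $K$ attracts every compact subset of $\mathcal{A}(K)$. By Lemma \ref{lem:easy} applied with the ambient neighbourhood $U \cap \mathcal{A}(K)$ of $K$, choose a neighbourhood $N$ of $K$ that is a (polyhedral, though we only need topological) $3$--manifold with boundary and that is contained in $\mathcal{A}(K)$; shrinking if necessary, we may assume $N$ is compact.

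Next I would set $N_k := f^k(N)$ for $k \geq 0$ and check that $\{N_k\}$ is the desired basis. Three things need verifying. First, each $N_k$ is a neighbourhood of $K$: since $f(K) = K$ (invariance) we have $f^k(K) = K \subseteq f^k(N)$, and by invariance of domain \cite[Theorem 36.5, p. 207]{munkres3} the injective continuous map $f$ is open, hence so is $f^k$, so $f^k(N)$ contains an open set around each of its points, in particular around the points of $K$. Second, each $N_k$ is a compact $3$--manifold homeomorphic to $N$: the restriction $f^k|_N : N \to f^k(N)$ is a continuous bijection from a compact space onto a Hausdorff space, hence a homeomorphism, so all the $N_k$ are homeomorphic to $N$ and therefore to each other. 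Third, $\{N_k\}$ is a neighbourhood basis of $K$: because $N$ is a compact subset of $\mathcal{A}(K)$, it is attracted by $K$, so for every neighbourhood $V$ of $K$ there is $n_0$ with $f^n(N) \subseteq V$ for all $n \geq n_0$; thus the $N_k$ are eventually contained in any prescribed neighbourhood of $K$.

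This establishes all the required properties, and since the argument is a direct assembly of standard facts (invariance of domain, compact-to-Hausdorff bijections are homeomorphisms, the definition of attraction), there is no serious obstacle. The only point that deserves a word of care is the passage to a \emph{compact} manifold neighbourhood inside $\mathcal{A}(K)$: Lemma \ref{lem:easy} as stated gives arbitrarily small polyhedral (hence compact) $3$--manifold neighbourhoods, so one simply picks one small enough to lie inside $\mathcal{A}(K)$, and I would note explicitly that this is where the hypothesis ``$K$ is an attractor'' (so that $\mathcal{A}(K)$ is an \emph{open} neighbourhood of $K$) is used. Everything else is the observation, already rehearsed in Example \ref{ejem:toy}, that iterating an open injective map under which $K$ is invariant turns one good neighbourhood into a whole basis of mutually homeomorphic ones; the novelty here over Example \ref{ejem:toy} is merely that we start from a manifold neighbourhood furnished by Lemma \ref{lem:easy} rather than from a ball, precisely because cellularity (and hence the existence of a ball neighbourhood) is not available without the global-attractor assumption.
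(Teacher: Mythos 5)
Your proposal is correct and takes essentially the same route as the paper: pick a compact $3$--manifold neighbourhood $N \subseteq \mathcal{A}(K)$ via Lemma \ref{lem:easy}, set $N_k := f^k(N)$, use invariance of $K$ and openness of $f$ (invariance of domain) to see each $N_k$ is a neighbourhood, compactness plus injectivity to see $f^k|_N$ is a homeomorphism onto $N_k$, and attraction of the compact set $N$ to conclude $\{N_k\}$ is a basis. Your write-up is in fact slightly more explicit than the paper's about why each $N_k$ is a neighbourhood of $K$, a point the paper only rehearses in Example \ref{ejem:toy}.
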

\begin{proof} Since $\mathcal{A}(K)$ is open in $U$ and $U$ is open in $\mathbb{R}^3$, it follows that $\mathcal{A}(K)$ is open in $\mathbb{R}^3$. By Lemma \ref{lem:easy} there exists an $m$--neighbourhood $N$ of $K$ contained in $\mathcal{A}(K)$. Since $K$ attracts compact subsets of $\mathcal{A}(K)$, it follows that $\{N_k := f^k(N) : k \in \mathbb{N}\}$ is a neighbourhood basis of $K$ in $\mathbb{R}^3$. Since $N$ is compact and $f$ is continuous and injective, each restriction $f^k|_{N} : N \longrightarrow N_k$ is a homeomorphism onto, so every $N_k$ is homeomorphic to $N$. Thus all the $N_k$ are compact $3$--manifolds homeomorphic to $N$ and therefore to each other.
\end{proof}

\begin{theorem} \label{teo:finite} (Finiteness) Let $K \subseteq \mathbb{R}^3$ be a compact subset of $\mathbb{R}^3$ and assume that it is an attractor for a dynamical system. Then $r(K) < \infty$.
\end{theorem}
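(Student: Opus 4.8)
The plan is to feed the $m$--neighbourhood basis produced by Lemma \ref{lem:trivial} into the characterizations of $r(K)$ in terms of neighbourhood bases. Since $K$ is an attractor for some discrete dynamical system, Lemma \ref{lem:trivial} gives an $m$--neighbourhood basis $\{N_k\}$ of $K$ whose members are all homeomorphic to one another. Each $N_k$ is a compact $3$--manifold with boundary, hence has finitely generated homology groups; so ${\rm rk}\ H_1(N_k)$ is a finite number, and because the $N_k$ are mutually homeomorphic this number is the same for all $k$, say $r$.

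First I would record that, by definition of neighbourhood basis, every sufficiently small neighbourhood of $K$ contains some $N_k$, and that $N_k$ satisfies ${\rm rk}\ H_1(N_k) = r \leq r$. Thus $K$ has arbitrarily small $m$--neighbourhoods $N$ with ${\rm rk}\ H_1(N) \leq r$; in the language of Proposition \ref{prop:equivalentes} this is precisely condition ({\it ii}\/). By that proposition, ({\it ii}\/) is equivalent to condition ({\it i}\/), i.e.\ to property $(P_r)$ of Definition \ref{defn:rdek}. Hence $(P_r)$ holds and therefore $r(K) \leq r < \infty$.

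The only point that requires a little care is that Lemma \ref{lem:trivial} delivers an $m$--neighbourhood basis rather than a $pm$--neighbourhood basis, so Proposition \ref{prop:easy1}({\it i}\/) cannot be quoted verbatim; instead one passes through the equivalence of the $m$-- and $p$--versions of $(P_r)$ furnished by Proposition \ref{prop:equivalentes} (which itself rests on Moise's theorem). Alternatively, one may first replace each $N_k$ by a polyhedral manifold via Moise's theorem, turning $\{N_k\}$ into a $pm$--neighbourhood basis with the same finite, constant first Betti number, and then apply Proposition \ref{prop:easy1}({\it i}\/) directly to get $r(K) \leq \liminf_{k \rightarrow \infty} {\rm rk}\ H_1(N_k) = r$. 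Either way there is no real obstacle: the substance of the theorem lies entirely in Lemma \ref{lem:trivial}, and what remains is the bookkeeping that converts ``$K$ admits a neighbourhood basis of mutually homeomorphic manifolds'' into ``$r(K) < \infty$''.
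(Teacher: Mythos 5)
Your proposal is correct and follows the paper's own argument: Lemma \ref{lem:trivial} gives mutually homeomorphic compact manifold neighbourhoods with finite, constant ${\rm rk}\ H_1$, and Proposition \ref{prop:equivalentes} converts this into property $(P_r)$, so $r(K) \leq r < \infty$. Your remark about routing through Proposition \ref{prop:equivalentes} rather than Proposition \ref{prop:easy1} is precisely the point the paper itself emphasises immediately after the theorem.
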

\begin{proof} By Lemma \ref{lem:trivial} $K$ has an $m$--neighbourhood basis $\{N_k\}$ comprised of compact $3$--manifolds, all homeomorphic to each other. Thus their first homology groups $H_1(N_k)$ all have the same rank $r$ which is finite because compact manifolds have finitely generated homology groups \cite[Corollaries A.8 and A.9, p. 527]{hatcher1}. From Proposition \ref{prop:equivalentes} we conclude that $r(K) \leq r < \infty$.
\end{proof}

Notice that the argument of Lemma \ref{lem:trivial} cannot be used to produce a $p$--neigh{\-}bourhood basis of $K$, because even if $N$ is chosen to be polyhedral, its images $f^k(N)$ do not need to be polyhedral. This illustrates the usefulness of the equivalences established in Proposition \ref{prop:equivalentes}.

\section{The invariance property} 

The precise statement of the invariance property and its proof are as follows:

\begin{theorem} \label{teo:inv} (Invariance) Let $K, K' \subseteq \mathbb{R}^3$ be compact sets and assume that there is a homeomorphism $h : \mathbb{R}^3 \longrightarrow \mathbb{R}^3$ such that $h(K) = K'$. Then $r(K) = r(K')$.
\end{theorem}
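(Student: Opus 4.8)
The plan is to exploit three elementary facts about a homeomorphism $h : \mathbb{R}^3 \longrightarrow \mathbb{R}^3$: it carries neighbourhoods of $K$ to neighbourhoods of $h(K)$, it carries \emph{arbitrarily small} neighbourhoods to arbitrarily small ones, and it preserves both the property of being a $3$--manifold and the ranks of homology groups. The one thing $h$ does \emph{not} respect is polyhedrality, since $h$ need not be piecewise linear; for this reason the argument must be run in terms of $m$--neighbourhoods rather than $p$--neighbourhoods, and this is precisely the point at which Proposition \ref{prop:equivalentes} is needed.

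Concretely, I would first prove $r(K') \leq r(K)$. If $r(K) = \infty$ there is nothing to show, so assume $r(K) = r < \infty$. By Proposition \ref{prop:equivalentes} the set $K$ has arbitrarily small $m$--neighbourhoods $N$ with ${\rm rk}\ H_1(N) \leq r$. Let $U'$ be an arbitrary neighbourhood of $K'$. Then $h^{-1}(U')$ is a neighbourhood of $K$, so we may choose such an $m$--neighbourhood $N$ with $N \subseteq h^{-1}(U')$. Put $N' := h(N)$. Since $h$ is a homeomorphism it is an open map, so $N'$ is a neighbourhood of $h(K) = K'$, and $N' \subseteq U'$. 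Moreover $h|_N : N \longrightarrow N'$ is a homeomorphism, so $N'$ is a compact $3$--manifold and $H_1(N') = H_1(N)$, whence ${\rm rk}\ H_1(N') \leq r$. As $U'$ was arbitrary, $K'$ has arbitrarily small $m$--neighbourhoods with first homology of rank at most $r$, and Proposition \ref{prop:equivalentes} yields $r(K') \leq r = r(K)$.

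Applying exactly this reasoning to the homeomorphism $h^{-1}$, which takes $K'$ onto $K$, gives $r(K) \leq r(K')$, and therefore $r(K) = r(K')$ (with the convention that if $r(K) = \infty$ then the first direction applied to $h^{-1}$ forces $r(K') = \infty$ as well). There is no serious obstacle in this proof; the only subtlety, and the reason the argument is not completely immediate from Definition \ref{defn:rdek}, is that $h$ will in general destroy polyhedrality, so one is obliged to work with the $m$--neighbourhood characterization of $r$ supplied by Proposition \ref{prop:equivalentes} rather than with $p$--neighbourhoods directly.
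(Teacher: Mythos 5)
Your proof is correct and is essentially the same as the paper's: both push arbitrarily small $m$--neighbourhoods of $K$ forward through $h$, observe that the image is still a compact $3$--manifold with the same rank of $H_1$, and rely on Proposition \ref{prop:equivalentes} precisely because $h$ need not preserve polyhedrality. The only cosmetic difference is that the paper phrases this as transferring property $(P_r)$ from $K$ to $K'$ and leaves the symmetric application of $h^{-1}$ implicit, which you spell out.
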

\begin{proof} It will be enough to show that if $K$ has property $(P_r)$ for some $r$, then so does $K'$. Let $V'$ be a neighbourhood of $K'$ and set $V := h^{-1}(V')$, which is also a neighbourhood of $K$. Since $K$ has property $(P_r)$, it has an $m$--neigbourhood $N \subseteq h^{-1}(V')$ with ${\rm rk}\ H_1(N) \leq r$. Let $N' := h(N)$, which is clearly a neighbourhood of $K'$ contained in $V'$. Since $N'$ is homeomorphic to $N$, it is a manifold (hence an $m$--neighbourhood of $K'$) with ${\rm rk}\ H_1(N') = {\rm rk}\ H_1(N) \leq r$. Thus $K'$ also has property $(P_r)$.
\end{proof}

Although Theorem \ref{teo:inv} is enough for our purposes of showing that certain sets cannot be attractors, the invariance property is in fact stronger: if the complements of $K$ and $K'$ are homeomorphic, then $r(K) = r(K')$. We now prove this version of the invariance property. For technical reasons it is convenient to consider the complements of $K$ and $K'$ in $\mathbb{S}^3$, rather than $\mathbb{R}^3$:

\begin{theorem} \label{teo:invs} Let $K, K' \subseteq \mathbb{R}^3 \subseteq \mathbb{S}^3$ be compact sets and assume that there is a homeomorphism $h : \mathbb{S}^3 - K \longrightarrow \mathbb{S}^3 - K'$. Then $r(K) = r(K')$.
\end{theorem}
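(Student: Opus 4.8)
The plan is to mimic the proof of Theorem \ref{teo:inv}, replacing the ambient homeomorphism $h : \mathbb{R}^3 \to \mathbb{R}^3$ by the homeomorphism of complements $h : \mathbb{S}^3 - K \to \mathbb{S}^3 - K'$. As before, it suffices to show that if $K$ has property $(P_r)$ then so does $K'$. The key point is that a $pm$--neighbourhood $N$ of $K$ in $\mathbb{S}^3$ (use Proposition \ref{prop:equivalentes} to work with polyhedral manifolds, which makes the push-forward well behaved) has a \emph{frontier} $\partial N$ lying in $\mathbb{S}^3 - K$, and similarly the closure of its complement, $N^c := \overline{\mathbb{S}^3 - N}$, is again a compact polyhedral $3$--manifold whose interior contains $\mathbb{S}^3 - N$ but which is disjoint from $K$. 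So the whole "collar region" of $K$ other than $K$ itself lives in $\mathbb{S}^3 - K$ and can be transported by $h$.

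Concretely, first I would fix a neighbourhood $V'$ of $K'$ in $\mathbb{S}^3$ and choose a small compact polyhedral $3$--manifold neighbourhood $M'$ of $K'$ with $M' \subseteq V'$; then $M'^c := \overline{\mathbb{S}^3 - M'}$ is a compact polyhedral $3$--manifold disjoint from $K'$, so $h^{-1}(M'^c)$ is a compact subset of $\mathbb{S}^3 - K$, and hence $h(K)$-side we have that $\mathbb{S}^3 - h^{-1}(M'^c)$ is an \emph{open} neighbourhood $W$ of $K$. Using $(P_r)$ for $K$, pick a $pm$--neighbourhood $N$ of $K$ with $N \subseteq W$ and $\mathrm{rk}\, H_1(N) \le r$. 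Then $N^c := \overline{\mathbb{S}^3 - N}$ is a compact polyhedral $3$--manifold disjoint from $K$ and containing $h^{-1}(M'^c)$ in its interior, so $h(N^c)$ is a compact subset of $\mathbb{S}^3 - K'$ contained in $\mathbb{S}^3 - M' \subseteq \mathbb{S}^3 - K'$; moreover $h(N^c) \supseteq h(h^{-1}(M'^c)) = M'^c$, forcing the complement $N'' := \overline{\mathbb{S}^3 - h(N^c)}$ to be a neighbourhood of $K'$ contained in $M' \subseteq V'$. The set $h(N^c)$ is homeomorphic to the polyhedral manifold $N^c$; by Moise's theorem (quoted in the proof of Proposition \ref{prop:equivalentes}) we may further assume, after an arbitrarily small ambient adjustment that keeps it inside $\mathbb{S}^3 - M'$, that $h(N^c)$ is itself a polyhedron, hence $N''$ is a $pm$--neighbourhood of $K'$ inside $V'$.

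It remains to bound $\mathrm{rk}\, H_1(N'')$ in terms of $\mathrm{rk}\, H_1(N)$. Here is where the duality theorems enter, and this is the step I expect to be the main obstacle. The manifold $N''$ and $h(N^c)$ share the common frontier $\Sigma' := \partial N'' = \partial h(N^c)$, a closed surface in $\mathbb{S}^3$; likewise $N$ and $N^c$ share $\Sigma := \partial N = \partial N^c$, and $h$ maps $\Sigma$ homeomorphically onto $\Sigma'$ (after the Moise adjustment, up to a further small homeomorphism which does not change homotopy type). A closed surface $\Sigma$ splitting $\mathbb{S}^3$ into two compact $3$--manifolds $N$ and $N^c$ has $\mathrm{rk}\, H_1(\Sigma) = 2g$ where $g = \sum g_i$ over the components, and the Mayer--Vietoris sequence for $\mathbb{S}^3 = N \cup N^c$, together with "half lives, half dies" (the image of $H_1(\Sigma) \to H_1(N)$ has rank exactly half that of $H_1(\Sigma)$, by Lefschetz duality), pins down $\mathrm{rk}\, H_1(N) = \mathrm{rk}\, H_1(N^c)$ plus a correction coming from $H_0$ of the pieces, which is controlled by the number of components. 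So $\mathrm{rk}\, H_1(N^c)$ is a function of $\mathrm{rk}\, H_1(N)$ and of purely combinatorial data (numbers of components) that is invariant under the homeomorphism $h|_\Sigma$; chasing the same Mayer--Vietoris argument back up for $\mathbb{S}^3 = h(N^c) \cup N''$ then yields $\mathrm{rk}\, H_1(N'') = \mathrm{rk}\, H_1(N) \le r$. Thus $K'$ has $(P_r)$, and by symmetry $r(K) = r(K')$. The delicate bookkeeping is making sure the component counts and the $H_0$/$H_1$ corrections match on both sides — this is really a statement that for a bicollared closed surface in $\mathbb{S}^3$ the pair $(\mathrm{rk}\, H_1(\text{inside}), \mathrm{rk}\, H_1(\text{outside}))$ is determined by the homeomorphism type of the surface alone — and I would isolate it as a short lemma before assembling the argument above.
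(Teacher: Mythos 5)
Your construction is, at bottom, the paper's own: your $N'' := \overline{\mathbb{S}^3 - h(N^c)}$ is literally the set $N' := K' \cup h(N-K)$ that the paper uses (its closure already contains $h(\partial N)$), so the approach is essentially the same — transport a small $m$--neighbourhood of $K$ across $h$ using only data living in $\mathbb{S}^3-K$, then check its first homology has the same rank. Where you diverge is the homological bookkeeping. Your proposed lemma is true: over $\mathbb{Z}_2$, for a bicollared closed surface $\Sigma$ splitting $\mathbb{S}^3$ into compact pieces $A$ and $B$, Mayer--Vietoris (using $H_2(\mathbb{S}^3)=H_1(\mathbb{S}^3)=0$) gives $H_1(\Sigma)\cong H_1(A)\oplus H_1(B)$, and ``half lives, half dies'' then forces ${\rm rk}\,H_1(A)={\rm rk}\,H_1(B)=\tfrac12\,{\rm rk}\,H_1(\Sigma)$; so your chain of equalities does close up. But this is heavier machinery than needed: the paper simply computes
\[{\rm rk}\,H_1(N') = {\rm rk}\,H_1(\mathbb{S}^3 - N') = {\rm rk}\,H_1(\mathbb{S}^3 - N) = {\rm rk}\,H_1(N) \leq r,\]
with the outer equalities by Alexander duality and the middle one because $h$ carries $\mathbb{S}^3-N$ onto $\mathbb{S}^3-N'$; this is exactly the shortcut your Mayer--Vietoris argument reproves.

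Two spots in your write-up need repair. First, the assertion that $h(N^c)$ is \emph{contained} in $\mathbb{S}^3 - M'$ is false: $h(N^c)$ contains $\overline{\mathbb{S}^3-M'}$, hence meets $\partial M'\subseteq M'$. Harmless, because the containment you actually use (and also state), $h(N^c)\supseteq \overline{\mathbb{S}^3-M'}$, is what yields $N''\subseteq M'\subseteq V'$. Second, and more substantively, your justification that $N''$ is a $pm$--neighbourhood leans on a Moise adjustment ``that keeps it inside $\mathbb{S}^3-M'$''; besides repeating the false containment, an $\varepsilon$--small re-embedding of $h(N^c)$ is not automatically harmless: you must verify that the perturbed copy still misses $K'$ (easy) and still contains $\mathbb{S}^3-{\rm int}\,M'$, so that the closure of its complement remains a neighbourhood of $K'$ inside $V'$ — the latter requires an argument (e.g.\ a mod $2$ local degree argument showing an $\varepsilon$--embedding of a compact $3$--manifold covers every interior point at distance greater than $\varepsilon$ from the complement of its interior). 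The cleaner fix is to drop Moise entirely: by Proposition \ref{prop:equivalentes} an $m$--neighbourhood suffices for property $(P_r)$, and $N''$ is a topological $3$--manifold because near each point of $h(\partial N)$ the local picture is the $h$--image of the local picture near $\partial N$ (transport the half-space charts, or the bicollar of $\partial N$, through $h$); this is the ``easy to check'' step the paper itself invokes for $N'$.
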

\begin{proof} As in the proof of Theorem \ref{teo:inv} it will be enough to show that if $K$ has property $(P_r)$ for some $r$, then so does $K'$.

Let $V'$ be an open neighbourhood of $K'$ and set $V := K \cup h^{-1}(V'-K')$, which is an open neighbourhood of $K$. Since $K$ has property $(P_r)$, it has an $m$--neighbourhood $N \subseteq V$ such that ${\rm rk}\ H_1(N) \leq r$. Denote $N' := K' \cup h(N - K)$. It is easy to check that $N'$ is an $m$--neighbourhood of $K'$ contained in $U$, and we have \[{\rm rk}\ H_1(N') = {\rm rk}\ H_1(\mathbb{S}^3 - N') = {\rm rk}\ H_1(\mathbb{S}^3 - N) = {\rm rk}\ H_1(N) \leq r,\] where the first and third equalities follow from Alexander duality and the middle one from the fact that $\mathbb{S}^3 - N$ and $\mathbb{S}^3 - N'$ are homeomorphic via $h$ (Alexander duality is recalled below, after Definition \ref{def:bola_perforada}). We conclude that $K'$ has property $(P_r)$, as was to be proved.
\end{proof}

Theorem \ref{teo:invs} can be alternatively stated by saying that \emph{$r(K)$ is a topological invariant of $\mathbb{S}^3 - K$}.

\section{The nullity property}

We now turn to the nullity property, which is the first of our results that actually proves that $r(K)$ is nonzero for some compact set $K$, laying the basis for more elaborate constructions as in Theorem \ref{teo:wild_arc}. Its statement is more general than the one given in the introduction:

\begin{theorem} \label{teo:null} (Nullity) Let $K \subseteq \mathbb{R}^3$ be a continuum with $\check{H}^2(K) = 0$. Then \[r(K) = 0 \Leftrightarrow K \text{ is cellular.}\]
\end{theorem}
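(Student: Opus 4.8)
The plan is to prove the two implications separately. The implication ``$K$ cellular $\Rightarrow r(K)=0$'' is immediate: if $K$ is cellular it has arbitrarily small neighbourhoods that are balls, and a ball $B$ satisfies ${\rm rk}\,H_1(B)=0$, so property $(P_0)$ holds and hence $r(K)=0$; this is essentially Example \ref{ejem:cell}. The substantive direction is ``$r(K)=0 \Rightarrow K$ cellular''. So assume $r(K)=0$; by Proposition \ref{prop:equivalentes} (using the $pm$--neighbourhood characterisation) $K$ has arbitrarily small $pm$--neighbourhoods $N$ with ${\rm rk}\,H_1(N)=0$, i.e.\ with $H_1(N)=0$. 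The goal is to show that such an $N$ can be taken to be a ball (possibly after passing to a slightly smaller neighbourhood), since then $K$ has a neighbourhood basis of balls and is cellular by definition.

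First I would analyse the topology of a single such $N$. It is a compact connected (after discarding the components not meeting $K$, and using that $K$ is a continuum, one may assume $N$ connected) polyhedral $3$--manifold with boundary, with $H_1(N;\mathbb{Z}_2)=0$. From $H_1(N)=0$ and the long exact sequence / Lefschetz duality for the $3$--manifold with boundary $N$, one deduces that $H_2(N;\mathbb{Z}_2)=0$ as well, and that the boundary $\partial N$ is a disjoint union of $2$--spheres: indeed, each component of $\partial N$ is a closed surface, and the half-lives/half-dies principle (the image of $H_1(\partial N)\to H_1(N)$ has half the rank of $H_1(\partial N)$) forces $H_1(\partial N;\mathbb{Z}_2)=0$, so each component of $\partial N$ is $\mathbb{S}^2$. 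Thus $N$ is a compact simply-connected-in-homology $3$--manifold whose boundary is a finite union of $2$--spheres.

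Next I would use the hypothesis $\check{H}^2(K)=0$ together with $K$ being a continuum to pin down which sphere of $\partial N$ ``surrounds'' $K$. Since $K$ is connected, $K$ lies in a single component $N_0$ of $N$, and $\mathbb{S}^3 - N_0$ is an open set whose complement $N_0$ is a manifold; the complementary regions of $N_0$ in $\mathbb{S}^3$ are bounded by the sphere boundary components. Using Alexander duality, $\check{H}^2(K;\mathbb{Z}_2) = 0$ translates into $\tilde H_0(\mathbb{S}^3 - K;\mathbb{Z}_2)=0$, i.e.\ $\mathbb{S}^3 - K$ is connected; since this holds for the intersection over a neighbourhood basis, for small enough $N$ exactly one boundary sphere $\Sigma$ of $N_0$ is ``inner'' (bounds a region containing $K$) and the others bound regions of $\mathbb{S}^3 - K$ that we can cap off: filling in all the outer spheres with balls produces a compact $3$--manifold $\widehat N$ with connected boundary $\Sigma \cong \mathbb{S}^2$ and still $H_1(\widehat N)=0$, hence (by the Poincar\'e conjecture in dimension $3$ — or, staying classical, by the fact that a homology $3$--ball with $2$--sphere boundary that embeds in $\mathbb{S}^3$ is a ball, which is elementary PL $3$--manifold theory and avoids Poincar\'e) $\widehat N$ is a $3$--ball. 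Shrinking back, $K$ has a neighbourhood contained in this ball, and running the argument over a cofinal family of $N$'s yields a neighbourhood basis of balls, so $K$ is cellular.

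The main obstacle is the passage from ``$N$ is a compact $3$--manifold embedded in $\mathbb{S}^3$ with $H_1(N;\mathbb{Z}_2)=0$ and one distinguished $\mathbb{S}^2$ boundary component enclosing $K$'' to ``the region bounded by that $\mathbb{S}^2$ on the $K$-side is a ball''. One must be careful that killing $\mathbb{Z}_2$-homology is genuinely enough: a priori $N$ could be something like a punctured lens space fragment where $H_1$ with $\mathbb{Z}_2$ coefficients vanishes for parity reasons. The way to close this gap is to work in $\mathbb{S}^3$: the closure of the $K$-side component $C$ of $\mathbb{S}^3$ minus $\Sigma$ is a compact $3$--manifold with boundary $\mathbb{S}^2$ embedded in $\mathbb{S}^3$, and any such manifold is a ball (this is the Sch\"onflies-type fact in the PL category, a consequence of Alexander's theorem that a PL $2$--sphere in $\mathbb{S}^3$ bounds a ball on each side). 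This is where the hypothesis $\check H^2(K)=0$ is essential — it guarantees the enclosing sphere is unique and that no outer component secretly also contains part of $K$ — and where the continuum hypothesis on $K$ is used to keep everything connected. Assembling these pieces into a genuine neighbourhood \emph{basis} (rather than a single neighbourhood) of balls is then a routine cofinality argument.
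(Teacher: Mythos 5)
Your overall strategy is the paper's: small polyhedral neighbourhoods with $H_1=0$ are balls with spherical cavities, one caps the cavities and concludes cellularity. But there is a genuine gap at the decisive step. The capped manifold $\widehat N$ contains the filled-in cavities, which are \emph{not} subsets of $N$; nothing you say guarantees that $\widehat N$ lies inside a prescribed neighbourhood $U$ of $K$, and without that you do not get a neighbourhood \emph{basis} of balls. You dismiss this as ``a routine cofinality argument,'' but it is exactly the point where $\check H^2(K)=0$ must do real work. Keep in mind the round sphere $K=\mathbb{S}^2$: its small neighbourhoods are shells with $H_1=0$, and capping the cavity always produces a large ball containing the origin, so $r(K)=0$ while $K$ is not cellular; the theorem fails precisely at this step when $\check H^2(K)\neq 0$. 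In your write-up the hypothesis is spent instead on identifying ``exactly one'' boundary sphere enclosing $K$ --- but that is automatic for \emph{any} perforated-ball neighbourhood of $K$ (the outer sphere encloses $K$, the cavity spheres do not), needs no hypothesis and no smallness of $N$, and does not address the smallness of $\widehat N$. The paper's proof is devoted to this issue: by continuity of \v{C}ech cohomology and Lemma \ref{lem:zero} one passes to a subsequence in which each inclusion $N_{k+1}\subseteq N_k$ kills $H^2$, hence $H_2$, and then Lemma \ref{lem:bound} shows that the outer sphere of $N_{k+1}$ bounds its ball \emph{inside} $N_k$, so $\widehat N_{k+1}\subseteq N_k$ and the capped balls do form a basis.

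Your gap can also be closed more elementarily than in the paper, and closer to the spirit of your Alexander-duality reformulation: take the $N_k$ decreasing; then the capped balls $\widehat N_k$ (equal to the complement of the unbounded complementary component of $N_k$) are also decreasing, and $\bigcap_k \widehat N_k=K$ because any $x\notin K$ can be joined to $\infty$ by a path in the open connected set $\mathbb{S}^3-K$, and this compact path misses $N_k$ for large $k$, putting $x$ in the unbounded complementary component; compactness then gives $\widehat N_k\subseteq U$ for large $k$. This is where connectivity of $\mathbb{S}^3-K$, i.e.\ $\check H^2(K)=0$, genuinely enters --- but you must actually make this argument. Two smaller corrections: your claim that $H_1(N)=0$ forces $H_2(N)=0$ is false (a spherical shell has $H_1=0$, $H_2=\mathbb{Z}_2$), though your conclusion that $\partial N$ consists of $2$--spheres is right; and the appeal to the Poincar\'e conjecture is unnecessary, since the capped region is the closure of a complementary domain of a polyhedral $2$--sphere in $\mathbb{S}^3$ and Alexander's polyhedral Sch\"onflies theorem (as in Lemma \ref{lem:perfora}) already makes it a ball.
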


The condition $\check{H}^2(K) = 0$ guarantees that $K$ has no ``spherical holes'' and it will seem very natural after Lemma \ref{lem:perfora} below. Here $\check{H}^2(K)$ denotes the second \v{C}ech cohomology group of $K$ with coefficients in $\mathbb{Z}_2$. \v{C}ech cohomology agrees with the usual (singular, or simplicial) cohomology theory on polyhedra \cite[Theorem 73.2, p. 437]{munkres3} but provides more information when applied to compacta with a ``bad'' local structure. There are several definitions of \v{C}ech cohomology, of which the readers could choose their favourite. For our purposes it will be enough to be aware of the following particular instance of the \emph{continuity property} of \v{C}ech cohomology \cite[Theorem 73.4, p. 440]{munkres3}: if $K \subseteq \mathbb{R}^3$ is a compact set and $\{N_k\}$ is a decreasing $p$--neighbourhood basis of $K$, then $\check{H}^d(K)$ is (isomorphic to) the direct limit of the direct sequence \[H^d(N_1) \stackrel{\varphi_1}{\longrightarrow} H^d(N_2) \stackrel{\varphi_2}{\longrightarrow} H^d(N_3) \stackrel{\varphi_3}{\longrightarrow} \ldots,\] where $\varphi_k$ is the homomorphism induced in cohomology by the inclusion $N_{k+1} \subseteq N_k$.
\smallskip

We already know, by Example \ref{ejem:cell}, that if $K$ is cellular then $r(K) = 0$. The interesting content Theorem \ref{teo:null} is the converse implication. If $K$ is a continuum with $r(K) = 0$, then Proposition \ref{prop:easy1} implies that $K$ has a neighbourhood basis $\{N_k\}$ comprised of polyhedral manifolds with ${\rm rk}\ H_1(N_k) = 0$, or equivalently $H_1(N_k) = 0$ for every $k$ (recall that we are taking coefficients in $\mathbb{Z}_2$). Since $K$ is connected, we can discard those components of the $N_k$ that do not meet $K$, obtaining smaller neighbourhoods which we again call $N_k$. These are still a neighbourhood basis of $K$ with $H_1(N_k) = 0$ for every $k$, but they are now connected. Summing up, we have proved the following result:

\begin{proposition} \label{prop:null_trivial} If $K \subseteq \mathbb{R}^3$ is a continuum with $r(K) = 0$, then $K$ has a neighbourhood basis $\{N_k\}$ comprised of connected polyhedral manifolds with $H_1(N_k) = 0$ for every $k$.
\end{proposition}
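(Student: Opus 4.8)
The plan is to read the conclusion off Proposition~\ref{prop:easy1} and then perform one pruning step that uses the connectedness of $K$.

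First, since $r(K)=0<\infty$, part~({\it ii}\/) of Proposition~\ref{prop:easy1} provides a $pm$--neighbourhood basis $\{N_k\}$ of $K$ with ${\rm rk}\,H_1(N_k)=r(K)=0$ for every $k$. As we are working with $\mathbb{Z}_2$ coefficients, each $H_1(N_k)$ is a $\mathbb{Z}_2$--vector space, so ${\rm rk}\,H_1(N_k)=0$ is the same as $H_1(N_k)=0$. Thus we already have a neighbourhood basis by polyhedral manifolds with vanishing first homology, and only connectedness remains to be arranged.

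Next I would replace each $N_k$ by the union $C_k$ of those connected components of $N_k$ that meet $K$. Since $N_k$ is a manifold it is locally connected, hence its components are open and closed in $N_k$; because $K$ is connected and the components are pairwise disjoint, $K$ lies inside a single component, so $C_k$ is in fact that one component and is itself connected. Then I would check the three properties needed. (a) $C_k$ is a polyhedral manifold: it is a manifold because it is open in $N_k$, and it is a polyhedron because it is the union of those simplices of a triangulation of $N_k$ that lie in the given component. (b) $C_k$ is still a neighbourhood of $K$ in $\mathbb{R}^3$: for $p\in K$ choose a connected ball $B\subseteq N_k$ about $p$; since $B$ meets $C_k$ it must lie in $C_k$, so $p\in{\rm int}_{\mathbb{R}^3}(C_k)$. (c) $H_1(C_k)=0$: the finitely many components of $N_k$ (finite by compactness) give a splitting of $H_1(N_k)$ as a finite direct sum, so $H_1(C_k)$ is a direct summand of $H_1(N_k)=0$. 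Finally I would observe that $\{C_k\}$ is still a neighbourhood basis of $K$: given a neighbourhood $U$ of $K$, pick $N_k\subseteq U$ and note $C_k\subseteq N_k\subseteq U$. Renaming $C_k$ back to $N_k$ completes the proof.

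I do not expect a serious obstacle here: the argument is essentially bookkeeping around the basis already supplied by Proposition~\ref{prop:easy1}. The one point genuinely worth a line of justification is step (b) above --- that discarding the components of $N_k$ disjoint from $K$ preserves the property of being a neighbourhood of $K$ --- which is where the local connectedness of manifolds is used.
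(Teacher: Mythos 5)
Your argument is correct and is essentially the paper's own proof: the paper likewise invokes Proposition~\ref{prop:easy1}.({\it ii}\/) to get a $pm$--neighbourhood basis with ${\rm rk}\,H_1(N_k)=0$ (hence $H_1(N_k)=0$ over $\mathbb{Z}_2$) and then discards the components of each $N_k$ that miss $K$, using the connectedness of $K$. Your extra bookkeeping (openness of components, polyhedrality, the direct-sum splitting of $H_1$, and the neighbourhood check) just makes explicit what the paper leaves to the reader.
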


The proof of Theorem \ref{teo:null} consists in showing that the $N_k$ mentioned in Proposition \ref{prop:null_trivial} are ``balls with holes'' (an idea that we formalise below under the name of \emph{perforated balls}) and then using the condition that $\check{H}^2(K) = 0$ to prove that these balls with holes can be ``filled in'' to obtain a new neighbourhood basis $\{\hat{N}_k\}$ of $K$ comprised of actual balls, thus showing that $K$ is cellular. 

\begin{definition} \label{def:bola_perforada} Suppose $B \subseteq \mathbb{R}^3$ is a polyhedral $3$--ball whose interior contains a finite number of disjoint polyhedral $3$--balls $B_1, \ldots, B_m$. Then we call $N := B - \bigcup_{i=1}^m {\rm int}\ B_i$ a \emph{perforated ball}. We denote $\hat{N} := B$ and say that $\hat{N}$ is obtained from $B$ by \emph{filling its holes}.
\end{definition}

The proofs in this section make use of certain duality relations in homology and cohomology combined with the universal coefficient theorem. Namely, we will use the following:
\begin{enumerate}
	\item Lefschetz duality: if $N$ is a compact $3$--manifold, then \[H_d(N,\partial N) = H_{3-d}(N).\]
	\item Alexander duality: if $L$ is a compact subset of $\mathbb{S}^3$, then \[\tilde{H}_d(\mathbb{S}^3 - L) = \check{H}^{2-d}(L).\] When $L$ is a polyhedron or a manifold its \v{C}ech cohomology agrees with its singular cohomology, so using the universal coefficient theorem it follows that \[\tilde{H}_d(\mathbb{S}^3-L) = H_{2-d}(L).\]
	\item Alexander duality in manifolds with boundary: if $N$ is a compact $3$--manifold, possibly with boundary, and $L \subseteq {\rm int}\ N$ is compact, then \[\check{H}^d(N,L) = H_{3-d}(N-L,\partial N).\] As in (2), if $L$ is a polyhedron or a manifold then \[H_d(N,L) = H_{3-d}(N-L,\partial N).\]
\end{enumerate}
The first two are standard and well known, but maybe this is not the case of the third. Thus we have included a proof in an Appendix. Alexander duality (2) holds more generally for compact subsets of any $\mathbb{S}^n$, with $2-d$ replaced by $(n-1)-d$.
\smallskip

We will also use the \emph{polyhedral Sch\"onflies theorem} in $\mathbb{R}^3$, due to Alexander \cite[Theorem 12, p. 122]{moise2}. A \emph{sphere} is a set homeomorphic to the standard sphere $\mathbb{S}^2 := \{p \in \mathbb{R}^{3} : \|p\| = 1\}$.

\medskip
{\it Theorem.} (Alexander) Let $S \subseteq \mathbb{R}^3 \subseteq \mathbb{S}^3$ be a polyhedral sphere. Then $\mathbb{S}^3 - S$ has exactly two components $U$ and $V$, both of which have the following properties: ({\it i}\/) their topological frontiers ${\rm fr}\ U$ and ${\rm fr}\ V$ coincide with $S$, ({\it ii}\/) their closures $\overline{U}$ and $\overline{V}$ are homeomorphic to a ball.
\medskip

This theorem has a long history and a great significance in the development of topology. We shall say some words about it in Section \ref{sec:esferas} but, for the moment, we will just carry on with the proof of the nullity property.

\begin{lemma} \label{lem:perfora} Let $N$ be a compact, connected, polyhedral $3$--manifold in $\mathbb{R}^3$.
\begin{enumerate}
	\item[({\it i}\/)] If $H_1(N) = 0$ and $H_2(N) = 0$ then $N$ is ball,
	\item[({\it ii}\/)] If $H_1(N) = 0$ then $N$ is a perforated ball.
\end{enumerate}
\end{lemma}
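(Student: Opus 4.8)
The plan is to prove both parts simultaneously by a careful analysis of $\partial N$ using the polyhedral Sch\"onflies theorem and the duality relations listed above. First I would establish the basic structure of the boundary: since $N$ is a compact polyhedral $3$--manifold, $\partial N$ is a closed polyhedral surface, hence a finite disjoint union of connected closed surfaces $S_0, S_1, \ldots, S_m$. Working in $\mathbb{S}^3$, I want to see that each $S_i$ is in fact a \emph{sphere}, i.e.\ homeomorphic to $\mathbb{S}^2$. The hypothesis $H_1(N) = 0$ is what forces this: using Lefschetz duality $H_2(N,\partial N) = H_1(N) = 0$, the long exact sequence of the pair $(N,\partial N)$, and the fact that $H_2(N)$ with $\mathbb{Z}_2$ coefficients is also constrained (a compact $3$--manifold with boundary has $H_3(N,\partial N) = \mathbb{Z}_2$ per component and then $H_2(\partial N) \hookrightarrow H_1(\partial N) \to \cdots$), I would deduce that $\sum_i \operatorname{rk} H_1(S_i) = 0$, so every $S_i$ has $H_1(S_i) = 0$, and a connected closed surface with vanishing first $\mathbb{Z}_2$--homology must be $\mathbb{S}^2$.

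Next, I would apply Alexander's theorem to each polyhedral sphere $S_i \subseteq \mathbb{S}^3$: it bounds a ball on each side. Since $N$ is connected and $\partial N = S_0 \sqcup \cdots \sqcup S_m$, I would argue that exactly one of the complementary balls of $S_0$ contains $N$ (call the other one $B$, the "outer" ball containing all of $N$), and for each $i \geq 1$, the bounded complementary ball $B_i$ of $S_i$ is disjoint from the interior of $N$, while the $B_i$ are pairwise disjoint and contained in $\operatorname{int} B$. The identification of which side is which uses connectedness of $N$ together with the fact that $\mathbb{S}^3 \setminus \partial N$ has exactly $m+2$ components (one per complementary region), one of which is $\operatorname{int} N$. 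Then $N = B \setminus \bigcup_{i=1}^m \operatorname{int} B_i$, which is precisely the assertion that $N$ is a perforated ball, proving ({\it ii}\/).

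For ({\it i}\/), I would additionally invoke $H_2(N) = 0$. In a perforated ball $N = B \setminus \bigcup_{i=1}^m \operatorname{int} B_i$, a Mayer--Vietoris or direct homology computation shows $H_2(N) \cong (\mathbb{Z}_2)^m$, generated by the classes of the inner boundary spheres $S_1, \ldots, S_m$. Hence $H_2(N) = 0$ forces $m = 0$, so $N = B$ is a ball. The main obstacle I anticipate is the bookkeeping in the second paragraph: one must be careful to justify, purely from connectedness and the Sch\"onflies decomposition, that the regions nest correctly (that no $S_i$ "contains" another in the wrong way and that $\operatorname{int} N$ really is the region "between" the outer sphere and the inner spheres). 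A clean way to handle this is to use the dual graph whose vertices are the $m+2$ complementary regions of $\partial N$ and whose edges are the $S_i$; this graph is a tree (since $\mathbb{S}^3$ is simply connected), and $\operatorname{int} N$ being a single vertex adjacent to all of $S_0, \ldots, S_m$ forces exactly the nesting described. Everything else is routine duality and the cited theorems.
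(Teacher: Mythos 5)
Your proposal is correct, but it reverses the paper's logical order and uses a different mechanism for part ({\it ii}\/). The paper proves ({\it i}\/) first: with $H_1(N)=H_2(N)=0$, Lefschetz duality and the exact sequence of $(N,\partial N)$ show $\partial N$ is a single surface with the homology of $\mathbb{S}^2$, hence a sphere, and the polyhedral Sch\"onflies theorem finishes; then ({\it ii}\/) follows by Alexander duality, which shows each component of $\mathbb{S}^3 - {\rm int}\ N$ has trivial $H_1$ and $H_2$, so ({\it i}\/) applies to it and it is a ball --- this makes your nesting bookkeeping automatic, the outer ball being $\mathbb{S}^3 - {\rm int}\ B_\infty$. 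You instead prove ({\it ii}\/) directly ($H_1(N)=0$ alone spherifies every boundary component, then Sch\"onflies plus the dual-graph/star argument), and deduce ({\it i}\/) from $H_2(\text{perforated ball with } m \text{ holes})\cong (\mathbb{Z}_2)^m$; both routes are sound, yours costing the extra nesting argument but isolating a cleaner boundary statement. Two repairs: the parenthetical ``$H_2(\partial N)\hookrightarrow H_1(\partial N)$'' is garbled and unneeded, since $H_2(N,\partial N)\cong H_1(N)=0$ and the segment $H_2(N,\partial N)\to H_1(\partial N)\to H_1(N)=0$ already give $H_1(\partial N)=0$; and the sentence defining $B$ contradicts itself --- $B$ must be the complementary ball of $S_0$ whose closure contains $N$, not ``the other one''. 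Finally, note that your balls are polyhedral (triangulate $\mathbb{S}^3$ with $N$ a subcomplex), as Definition \ref{def:bola_perforada} requires.
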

\begin{proof} It is best to think of $N$ as a subset of $\mathbb{S}^3$, and this is what we will do for the proof. Also, we consider $\mathbb{S}^3$ as $\mathbb{R}^3$ together with a point at infinity, denoted $\infty$.
\smallskip

({\it i}\/) From the long exact sequence in reduced homology for the pair $(N,\partial N)$ and Lefschetz duality it follows that \[\tilde{H}_d(\partial N) = H_{d+1}(N,\partial N) = H_{2-d}(N) = \left\{ \begin{array}{cc} 0 & \text{ if } d = 0 \\ 0 & \text{ if } d = 1 \\ \mathbb{Z}_2 & \text{ if } d = 2 \end{array} \right.\] and therefore $\partial N$ is a connected compact surface (without boundary) with the homology of the $2$--sphere. Hence $\partial N$ is a $2$--sphere.

By the polyhedral Sch\"onflies theorem in $\mathbb{S}^3$, $\mathbb{S}^3 - \partial N$ has exactly two connected components $U$ and $V$, the closure of each of them being homeomorphic to a $3$--ball. Now observe that $\mathbb{S}^3 - \partial N$ is the disjoint union of the open sets ${\rm int}\ N$ and $\mathbb{S}^3 - N$. Since ${\rm int}\ N$ is connected because $N$ was assumed to be connected, it follows that ${\rm int}\ N$ is one of the components of $\mathbb{S}^3 - \partial N$. Thus ${\rm int}\ N = U$ (say) and consequently $N = \overline{{\rm int}\ N} = \overline{U}$ is homeomorphic to a $3$--ball.
\smallskip

({\it ii}\/) By Alexander duality $\tilde{H}_d(\mathbb{S}^3 - N) = H_{2-d}(N) = 0$ for $d = 1,2$, which shows that each component of the compact polyhedral $3$--manifold $\mathbb{S}^3 - {\rm int}\ N$ has trivial reduced homology. This implies, by part ({\it i}\/) of this lemma, that they are all polyhedral balls. Since $N$ is a compact subset of $\mathbb{R}^3$, it follows that $\infty$ must belong to the interior of one of those balls, say $B_{\infty}$, and the remaining balls $B_1, \ldots, B_m$ are subsets of $\mathbb{R}^3$. Therefore \[N = (\mathbb{S}^3 - {\rm int}\ B_{\infty}) - \bigcup_{i=1}^m\ {\rm int}\ B_i,\] and denoting $B := \mathbb{S}^3 - {\rm int}\ B_{\infty}$, which is a polyhedral ball in $\mathbb{R}^3$, we have \[N = B - \bigcup_{i=1}^m\ {\rm int}\ B_i,\] where the $B_i$ are disjoint balls in the interior of $B$. Hence $N$ is a perforated ball.
\end{proof}

It follows from Proposition \ref{prop:null_trivial} and Lemma \ref{lem:perfora} that if $K \subseteq \mathbb{R}^3$ is a continuum with $r(K) = 0$, then it has a neighbourhood basis $\{N_k\}$ comprised of perforated balls. Under the additional hypothesis that $\check{H}^2(K) = 0$ we will show that the holes of the $N_k$ can be filled in and, although this enlarges the $N_k$, we \emph{still} obtain a neighbourhood basis $\{\hat{N}_k\}$ of $K$. Now $\{\hat{N}_k\}$ is comprised of balls, thus proving that $K$ is cellular. Lemma \ref{lem:bound} provides the geometric basis for performing this operation.

We need to recall a definition. A closed connected surface $S$ has $H_2(S) = \mathbb{Z}_2$ by Poincar\'e duality, and a generator for $H_2(S)$ is called a \emph{fundamental class} of $S$. We denote it by $[S]$. If one is willing to think in terms of simplicial homology and imagines $S$ as a triangulated surface, $[S]$ is nothing but the sum of all the $2$--simplices of $S$.

\begin{lemma} \label{lem:bound} Let $N$ be a compact polyhedral $3$--manifold in $\mathbb{R}^3$. Assume that $N$ is connected and $S \subseteq {\rm int}\ N$ is a polyhedral $2$--sphere such that $[S] = 0$ in $H_2(N)$. Then the ball bounded by $S$ in $\mathbb{R}^3$ is contained in $N$.
\end{lemma}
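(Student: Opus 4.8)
The plan is to use the polyhedral Schönflies theorem to produce the ball $B$ bounded by $S$ in $\mathbb{R}^3$ and then show, by a homological argument, that $B$ cannot poke outside $N$. First I would apply Alexander's theorem to the polyhedral sphere $S \subseteq \mathbb{R}^3$: it splits $\mathbb{S}^3 - S$ into two components, the closures of which are both balls; exactly one of them, call it $B$, is a subset of $\mathbb{R}^3$ (the other contains $\infty$), and this is the ``ball bounded by $S$ in $\mathbb{R}^3$'' referred to in the statement. The goal is to prove $B \subseteq N$. Suppose not. Since $S = \partial B \subseteq \operatorname{int} N$, the set $B \cap \partial N$ is a nonempty compact polyhedron disjoint from $S$, lying in $\operatorname{int} B$; intuitively $B$ contains a whole boundary component of $N$.

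The cleanest route is to argue at the level of fundamental classes and the pair $(B, S)$ versus $(N, \partial N)$. Consider the inclusion-induced map $H_3(B,S) \to H_3(N, N - \operatorname{int} B)$, or better, work with the component structure: let $N' := N - \operatorname{int} B$. If $B \not\subseteq N$, then since $S \subseteq \operatorname{int} N$ separates $B$ from the rest, the manifold $N$ decomposes (as a polyhedron) along $S$ into the part inside $B$ and the part outside, and $\partial N \subseteq \operatorname{int} N$ would have to meet $\operatorname{int} B$. Then $N \cap B$ is a compact $3$-manifold with boundary contained in $S \cup (\partial N \cap B)$, and in fact $\partial(N \cap B) = S$ together with whatever components of $\partial N$ lie in $\operatorname{int} B$ — but by construction of $B$ as one side of $S$, if any boundary component of $N$ lies inside $B$ then $N \cap B$ is a proper compact submanifold of $B$ whose boundary is $S \sqcup (\text{those components})$, and its fundamental class in $H_3(N\cap B, \partial(N\cap B))$ maps the sum of these boundary $2$-spheres to $0$ in $\tilde H_2(\partial(N\cap B))$ up to the relation coming from $[N\cap B]$. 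I would instead push the argument into $N$ itself: the fundamental class $[B] \in H_3(B,\partial B) = H_3(B,S)$ maps, under the map induced by $(B, S) \hookrightarrow (N, N - \operatorname{int} B)$ and excision, to a class whose image in $H_2(N)$ under the boundary homomorphism of the appropriate long exact sequence is exactly $[S]$ (with $\mathbb{Z}_2$ coefficients the orientation subtleties vanish). Since we are given $[S] = 0$ in $H_2(N)$, this forces a contradiction with $B \not\subseteq N$, because if $B \not\subseteq N$ then $B \cap N$ is a nontrivial compact $3$-submanifold and the same class does \emph{not} die — concretely, the boundary $\partial N$ would then have a component $S'$ inside $B$ with $[S'] \ne 0$ in $H_2(N)$ (every boundary component of a compact $3$-manifold is nonzero in $H_2$ by Lefschetz duality, exactly as computed in the proof of Lemma \ref{lem:perfora}), and $[S] = [S'] + (\text{other boundary components of } N \text{ inside } B)$ in $H_2(N)$, which cannot vanish unless $B$ contains no boundary component of $N$ at all, i.e. $B \subseteq N$.

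So the skeleton is: (1) invoke Alexander's theorem to name $B$; (2) assume $B \not\subseteq N$ and observe $\partial N \cap \operatorname{int} B \ne \emptyset$, all polyhedral; (3) note that each component $S_j$ of $\partial N$ is a $2$-sphere satisfying $[S_j] \ne 0$ in $H_2(N)$ — the non-vanishing coming from the Lefschetz-duality computation $\tilde H_2(\partial N) = H_3(N, \partial N) = H_0(N) = \mathbb{Z}_2$ on each component, exactly as in Lemma \ref{lem:perfora}(i); (4) show that in $H_2(N)$ one has $[S] = \sum_{S_j \subseteq \operatorname{int} B} [S_j]$, because $B \cap N$ is a compact $3$-manifold with boundary $S \cup \bigcup_{S_j \subseteq \operatorname{int} B} S_j$, and the fundamental class of that manifold witnesses this relation (its boundary, pushed into $N$, is null); (5) conclude from $[S] = 0$ that the sum on the right is empty, hence no component of $\partial N$ lies in $\operatorname{int} B$, hence $B \subseteq N$.

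The main obstacle is step (4): making precise that $B \cap N$ really is a compact polyhedral $3$-manifold whose boundary is exactly $S \sqcup (\partial N \cap \operatorname{int} B)$, which requires knowing that $\partial N$ meets $B$ only in $\operatorname{int} B$ (true since $S = \partial B$ is in $\operatorname{int} N$, disjoint from $\partial N$) and that $B$ meets $\partial N$ transversally in the PL sense — here one may need a small PL general-position adjustment, or one simply observes that since $S \cap \partial N = \emptyset$, $\partial N$ is contained in $\operatorname{int} B \sqcup (\mathbb{R}^3 - B)$, so $B \cap N = B \setminus \bigcup\{$components of $\mathbb{R}^3 - N$ inside $\operatorname{int} B\}$ is manifestly a perforated ball, and the fundamental-class relation is then read off from the long exact sequence of the pair $(B, \partial(B\cap N))$ or directly from $H_3(B\cap N, \partial(B\cap N)) \to H_2(\partial(B \cap N)) \to H_2(B \cap N) \to H_2(B\cap N,\partial(B\cap N))$ combined with the inclusion $B \cap N \hookrightarrow N$. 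Everything else is bookkeeping with $\mathbb{Z}_2$-coefficient Lefschetz duality and the long exact sequences already used in Lemma \ref{lem:perfora}.
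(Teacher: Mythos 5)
Your overall strategy (assume $B \not\subseteq N$, trap some boundary components of $N$ inside ${\rm int}\ B$, and use the fundamental class of the compact $3$--manifold $B \cap N$ to get the relation $[S] = \sum_{S_j \subseteq {\rm int}\ B} [S_j]$ in $H_2(N)$) is different from the paper's, and steps (1), (2) and (4) are fine; since $S \cap \partial N = \emptyset$ no transversality is needed and $B\cap N$ is indeed a compact $3$--manifold with boundary $S \sqcup (\partial N \cap {\rm int}\ B)$. But step (3), and the way it is used in step (5), contains a genuine gap. The claim that every boundary component of a compact $3$--manifold is nonzero in $H_2$ is false: the boundary sphere of a ball is nullhomologous in the ball, and in a perforated ball the outer sphere equals the sum of the inner ones. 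The appeal to the computation in Lemma \ref{lem:perfora}({\it i}\/) is not legitimate here, because that computation used the hypotheses $H_1(N) = H_2(N) = 0$, which the present lemma does not assume (for the same reason the components of $\partial N$ need not be spheres, though that is harmless). Worse, even if each $[S_j]$ were nonzero, your inference ``each term nonzero, hence the sum cannot vanish'' fails over $\mathbb{Z}_2$: a sum of several nonzero classes can perfectly well be zero, and for the boundary components of a perforated ball it is.

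What is actually true, and what your argument needs, is that the kernel of $H_2(\partial N;\mathbb{Z}_2) \to H_2(N;\mathbb{Z}_2)$ is exactly the one--dimensional subspace spanned by the total class $[\partial N] = \sum_j [S_j]$: it is the image of $\partial : H_3(N,\partial N) \cong H^0(N) = \mathbb{Z}_2 \to H_2(\partial N)$, using $H_3(N) = 0$. Hence $\sum_{j \in J}[S_j] = 0$ with $J \neq \emptyset$ forces $J$ to consist of \emph{all} boundary components, and this case must then be excluded by a separate geometric argument you do not give: if $\partial N \subseteq {\rm int}\ B$, then $N$ meets the unbounded complementary domain $W$ of $S$ in a set that is open and closed in $W$, hence $N \cap W = \emptyset$ and $N \subseteq B$, contradicting $S = \partial B \subseteq {\rm int}\ N$. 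With these two additions your proof closes, and it is then a genuinely different (longer) route than the paper's, which instead reads off $H_3(N,S) \cong \mathbb{Z}_2$ from $[S]=0$ and applies Alexander duality in manifolds with boundary to conclude that exactly one component of $N-S$ misses $\partial N$, that component being the bounded complementary domain of $S$.
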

\begin{proof} From the exact sequence \[H_3(N) = 0 \longrightarrow H_3(N,S) \longrightarrow H_2(S) = \mathbb{Z}_2 \stackrel{0}{\longrightarrow} H_2(N)\] it follows that $H_3(N,S) = \mathbb{Z}_2$. By Alexander duality in manifolds with boundary this implies that \[\mathbb{Z}_2 = H_3(N,S) = H_0(N - S,\partial N)\] which shows that $N - S$ has exactly one connected component disjoint from $\partial N$. Denote this component $U$. On the one hand, $U$ is closed in $N - S$ so it is also closed in $\mathbb{R}^3 - S$. On the other hand, since $U$ is open in $N - S$ and does not meet $\partial N$, it is open in ${\rm int}\ N - S$ and consequently also in $\mathbb{R}^3 - S$. Therefore $U$ must be a component of $\mathbb{R}^3 - S$ and, being contained in $N$ (which is compact), it has to be the bounded one; hence by the Sch\"onflies theorem it is the ball bounded by $S$ in $\mathbb{R}^3$.
\end{proof}

Let \[G_1 \stackrel{\varphi_1}{\longrightarrow} G_2 \stackrel{\varphi_2}{\longrightarrow} \ldots \stackrel{\varphi_{k-1}}{\longrightarrow} G_k \stackrel{\varphi_k}{\longrightarrow} \ldots\] be a direct sequence of abelian groups and let $G$ be its direct limit (see any algebra book or \cite[pp. 434ff.]{munkres3} for a definition of these concepts). We denote $\varphi_{k\ell} := \varphi_{\ell-1} \circ \ldots \circ \varphi_{k+1} \circ \varphi_k$ for $k \leq \ell$ and $\psi_k : G_k \longrightarrow G$ the canonical maps from each of the $G_k$ to the direct limit $G$. It is an immediate consequence of the definition of $G$ that for an element $g_k \in G_k$, the image $\psi_k(g_k) = 0 \in G$ if and only if there exists $\ell \geq k$ such that $\varphi_{k \ell}(g_k) = 0 \in G_{\ell}$. This generalises to the following lemma:

\begin{lemma} \label{lem:zero} Let \[G_1 \stackrel{\varphi_1}{\longrightarrow} G_2 \stackrel{\varphi_2}{\longrightarrow} \ldots \stackrel{\varphi_{k-1}}{\longrightarrow} G_k \stackrel{\varphi_k}{\longrightarrow} \ldots\] be a direct sequence whose direct limit $G$ is the trivial group. If $G_k$ is finitely generated, then there exists $\ell \geq k$ such that $\varphi_{k\ell} = 0$.
\end{lemma}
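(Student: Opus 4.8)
The plan is to exploit the finite generation of $G_k$ to reduce the statement about the whole tail of the sequence to the already-observed fact (stated just before the lemma) about single elements. Let $g^{(1)}, \ldots, g^{(n)}$ be a finite generating set for $G_k$. For each generator $g^{(i)}$ the canonical map $\psi_k : G_k \to G$ sends $g^{(i)}$ to $0$, since $G$ is the trivial group; hence by the observation preceding the lemma there is some $\ell_i \geq k$ with $\varphi_{k\ell_i}(g^{(i)}) = 0$ in $G_{\ell_i}$. Here I would carry out the argument in the following order: first fix the generating set, then extract the individual indices $\ell_i$, then take $\ell := \max\{\ell_1, \ldots, \ell_n\}$, which is where finiteness of the generating set is essential.

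The next step is to check that $\varphi_{k\ell}$ kills every generator, and hence is the zero homomorphism. For each $i$ we have $\ell \geq \ell_i \geq k$, so the composition identity $\varphi_{k\ell} = \varphi_{\ell_i \ell} \circ \varphi_{k \ell_i}$ holds (with the convention that $\varphi_{\ell\ell} = \mathrm{id}$ when $\ell = \ell_i$); therefore $\varphi_{k\ell}(g^{(i)}) = \varphi_{\ell_i \ell}\bigl(\varphi_{k\ell_i}(g^{(i)})\bigr) = \varphi_{\ell_i \ell}(0) = 0$ in $G_\ell$. Since $\varphi_{k\ell}$ is a homomorphism vanishing on a generating set of $G_k$, it vanishes identically, i.e. $\varphi_{k\ell} = 0$. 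This gives the desired $\ell$.

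I do not expect a serious obstacle here: the only thing one genuinely uses beyond formal manipulation of the direct system is that a generating set can be taken \emph{finite}, so that a maximum of finitely many indices exists and is itself a valid index $\geq k$. (If $G_k$ were not finitely generated one could only conclude that each element is eventually killed, but with no uniform bound on when, so $\varphi_{k\ell}$ need never be identically zero — this is exactly the point where the hypothesis is used, and it is worth a sentence in the write-up.) The composition identities $\varphi_{k\ell} = \varphi_{\ell_i\ell}\circ\varphi_{k\ell_i}$ are immediate from the definition $\varphi_{k\ell} = \varphi_{\ell-1}\circ\cdots\circ\varphi_k$, so no care beyond bookkeeping of indices is needed there.
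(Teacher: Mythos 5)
Your proof is correct, and it is exactly the ``very simple proof'' the paper omits: reduce to the elementwise observation stated just before the lemma, use finite generation to take the maximum of finitely many indices, and compose the bonding maps. Nothing further is needed.
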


We omit its very simple proof and proceed with the proof of Theorem \ref{teo:null}.

\begin{proof}[Proof of Theorem \ref{teo:null}] By Proposition \ref{prop:null_trivial} and Lemma \ref{lem:perfora}.{\it ii} we see that $K$ has a neighbourhood basis $\{N_k\}$ comprised of perforated balls. After passing to a subsequence if needed we may assume that $N_{k+1} \subseteq {\rm int}\ N_k$ for every $k$. 

The \v{C}ech cohomology group $\check{H}^2(K)$ is the direct limit of the sequence \[H^2(N_1) \longrightarrow H^2(N_2) \longrightarrow H^2(N_3) \longrightarrow \ldots,\] where the arrows are the inclusion induced homomorphisms. The assumption that $\check{H}^2(K) = 0$ together with the fact that $H^2(N_k)$ is finitely generated for each $k$ imply by Lemma \ref{lem:zero} that for every $k$ there exists $\ell \geq k$ such that the inclusion $N_{\ell} \subseteq N_k$ induces the zero homomorphism $0 : H^2(N_k) \longrightarrow H^2(N_{\ell})$. Thus by passing to a subsequence of the $N_k$ we may assume that \[H^2(N_1) \stackrel{0}{\longrightarrow} H^2(N_2) \stackrel{0}{\longrightarrow} H^2(N_3) \stackrel{0}{\longrightarrow} \ldots\] and so by duality each inclusion $N_{k+1} \subseteq N_k$ induces the zero homomorphism $0 : H_2(N_{k+1}) \longrightarrow H_2(N_k)$.

Consider $\hat{N}_{k+1}$, the ball obtained from the perforated ball $N_{k+1}$ by capping its holes. The sphere $\partial \hat{N}_{k+1}$ is clearly one of the boundary components of $N_{k+1}$ and therefore its fundamental class $[\partial \hat{N}_{k+1}]$ is taken to zero in $H_2(N_k)$ by the inclusion $N_{k+1} \subseteq N_k$. Thus by Lemma \ref{lem:bound} it follows that $\hat{N}_{k+1} \subseteq N_k$. Consequently $\{\hat{N}_k\}$ is a neighbourhood basis of $K$ comprised of balls, and we conclude that $K$ is cellular.
\end{proof}

\section{The subadditivity property} \label{sec:sub}

Suppose $K \subseteq \mathbb{R}^3$ is a compact set expressed as the union of two compact sets $K_1$ and $K_2$. We call such an expression a \emph{decomposition} of $K$. The subadditivity property reads as follows:

\begin{theorem} \label{teo:sub} (Subadditivity) Assume that $K = K_1 \cup K_2$ is a tame decomposition of $K$ and suppose that $\check{H}^1(K_1 \cap K_2) = 0$. Then \[r(K_1) + r(K_2) \leq r(K).\]
\end{theorem}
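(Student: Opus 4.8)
The case $r(K) = \infty$ being vacuous, assume $r(K) < \infty$ and write $r := r(K)$. The plan is to manufacture, for arbitrarily small prescribed neighbourhoods $U_1$ of $K_1$ and $U_2$ of $K_2$, a $p$--neighbourhood $M^1 \subseteq U_1$ of $K_1$ and a $p$--neighbourhood $M^2 \subseteq U_2$ of $K_2$ with ${\rm rk}\ H_1(M^1) + {\rm rk}\ H_1(M^2) \le r$. Granting this, the proof finishes by a pigeonhole argument: letting $U_1, U_2$ run over countable neighbourhood bases, the integers ${\rm rk}\ H_1(M^1)$ and ${\rm rk}\ H_1(M^2)$ take only finitely many values, so some pair $(a,b)$ with $a + b \le r$ occurs for a cofinal set of choices; then $\{M^1\}$ and $\{M^2\}$ are $p$--neighbourhood bases of $K_1$ and $K_2$ along which ${\rm rk}\ H_1$ is constantly $a$ and $b$, so Proposition \ref{prop:easy1} (or Definition \ref{defn:rdek}) gives $r(K_1) \le a$, $r(K_2) \le b$ and hence $r(K_1) + r(K_2) \le r$.

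To build $M^1$ and $M^2$, use property $(P_r)$ to pick a $p$--neighbourhood $N$ of $K$, inside a neighbourhood of $K$ small enough that its ``$K_1$--side'' will end up in $U_1$ and its ``$K_2$--side'' in $U_2$, with ${\rm rk}\ H_1(N) \le r$; replacing $N$ by a regular neighbourhood of a shrunken copy, which does not change ${\rm rk}\ H_1(N)$, cleans up the polyhedral bookkeeping. Tameness of the decomposition $K = K_1 \cup K_2$ then lets us write $N = N^1 \cup N^2$ with $N^i$ a polyhedral neighbourhood of $K_i$, and with the overlap $D := N^1 \cap N^2$ a polyhedral neighbourhood of $K_1 \cap K_2$ that may be taken as small as we wish (in particular inside any prescribed neighbourhood of $K_1 \cap K_2$) while the two sides stay localised around $K_1$ and $K_2$. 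Separately, the hypothesis $\check{H}^1(K_1 \cap K_2) = 0$ controls the first homology of the overlap: for a decreasing $p$--neighbourhood basis $\{D_k\}$ of $K_1 \cap K_2$ the continuity property of \v{C}ech cohomology identifies $\check{H}^1(K_1 \cap K_2)$ with the direct limit of $H^1(D_1) \to H^1(D_2) \to \ldots$, which is therefore trivial; since each $H^1(D_k)$ is finitely generated, Lemma \ref{lem:zero} gives for each $k$ an index $\ell \ge k$ with $H^1(D_k) \to H^1(D_\ell)$ zero, and dualising over the field $\mathbb{Z}_2$ (universal coefficients) the inclusion $D_\ell \subseteq D_k$ induces the zero map $H_1(D_\ell) \to H_1(D_k)$. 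Thus, choosing $D$ small enough, every $1$--cycle of $D$ bounds homologically in a fixed larger polyhedral neighbourhood $D^{+}$ of $K_1 \cap K_2$.

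The conclusion now comes from a Mayer--Vietoris rank count. With $\mathbb{Z}_2$ coefficients the sequence of $N = N^1 \cup N^2$ reads
\[
H_2(N) \stackrel{\partial}{\longrightarrow} H_1(D) \stackrel{\alpha}{\longrightarrow} H_1(N^1) \oplus H_1(N^2) \stackrel{\beta}{\longrightarrow} H_1(N) \longrightarrow \tilde{H}_0(D) \longrightarrow \tilde{H}_0(N^1) \oplus \tilde{H}_0(N^2),
\]
and exactness gives ${\rm rk}\ H_1(N^1) + {\rm rk}\ H_1(N^2) = {\rm rk}\,({\rm im}\,\alpha) + {\rm rk}\,({\rm im}\,\beta) \le {\rm rk}\,({\rm im}\,\alpha) + r$, the $\tilde{H}_0$ terms only improving the estimate once spurious components are discarded. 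So it suffices to arrange ${\rm im}\,\alpha = 0$, i.e.\ that every $1$--cycle of $D$ bounds in each of $N^1$ and $N^2$. This is exactly where the two preceding observations are combined: since the loops of the small overlap $D$ already bound in the larger neighbourhood $D^{+}$ of $K_1 \cap K_2$, one enlarges $N^1$ and $N^2$ so as to absorb $D^{+}$ (staying inside $U_1$ and $U_2$), iterating finitely many times — the ranks of the intermediate overlaps being uniformly bounded — until every cycle of the final overlap bounds in each final side. Taking $M^1$ and $M^2$ to be these final enlarged sides, ${\rm rk}\ H_1(M^1) + {\rm rk}\ H_1(M^2) \le r$, as required.

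The step I expect to be the genuine obstacle is the last one: choreographing the scales — how small to take the overlap, how far and in what order to enlarge the two sides — so that the enlargements truly annihilate ${\rm im}\,\alpha$ without creating new first homology in $M^1$ or $M^2$ and without leaving $U_1$ or $U_2$. Making this precise is where the quantitative content of ``tame decomposition'' is really used and where the bulk of the technical work lies; everything else is routine manipulation of exact sequences, regular neighbourhoods, and Alexander/Lefschetz duality.
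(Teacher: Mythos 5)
Your reduction of the theorem to producing, inside arbitrarily small prescribed neighbourhoods $U_1,U_2$, neighbourhoods $M^1,M^2$ of $K_1,K_2$ with ${\rm rk}\ H_1(M^1)+{\rm rk}\ H_1(M^2)\le r(K)$ is the right skeleton (it is essentially the paper's Theorem \ref{teo:aux}), and the Mayer--Vietoris rank count is correct as far as it goes. The genuine gap is exactly the step you flag at the end: arranging ${\rm im}\,\alpha=0$. Your mechanism for it is circular. The continuity property plus Lemma \ref{lem:zero} only gives that the $1$--cycles of a small neighbourhood $D$ of $K_1\cap K_2$ bound in a strictly \emph{larger} neighbourhood $D^{+}$; to make them bound in $N^1$ and in $N^2$ you must enlarge both sides to contain $D^{+}$, but then the overlap $N^1\cap N^2$ contains $D^{+}$ rather than $D$, and it is the cycles of this larger overlap that must now bound --- you are back where you started, one scale up. Nothing in the proposal shows the iteration terminates: the claim that the ranks of the intermediate overlaps are uniformly bounded is not argued, and each absorption changes the overlap, so the key inequality is never established. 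Note also that the statement is genuinely delicate here: Example \ref{exam:not_tame2} shows subadditivity fails for non-tame decompositions, so tameness must enter through a concrete geometric construction and not merely through the assertion that $N$ splits into two polyhedral neighbourhoods with small overlap (which itself glosses over the fact that cutting $N$ along $S$ does \emph{not} give neighbourhoods of $K_1$ and $K_2$, since points of $K_1\cap K_2$ fail to be interior to either half).

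The paper resolves this with a different idea, and in particular uses the hypothesis $\check{H}^1(K_1\cap K_2)=0$ for a different purpose than you do. It cuts $N$ along the square $S$ of the tame decomposition, so the relevant overlap is the $2$--manifold $N\cap S$, a union of disks-with-holes $C_i$; it then extrudes the $C_i$ by carefully nested amounts and caps their holes with thickened capping disks, producing neighbourhoods $N_1,N_2$ of $K_1,K_2$ which, when abstractly reglued, meet in a disjoint union of \emph{disks}. Thus the first homology of the overlap vanishes automatically, and the inequality ${\rm rk}\ H_1(N_1)+{\rm rk}\ H_1(N_2)\le {\rm rk}\ H_1(N)$ follows from two Mayer--Vietoris sequences together with the observation that the reglued manifold is a copy of $N$ with finitely many balls attached along annuli --- no cohomological hypothesis is needed for this part. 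The hypothesis $\check{H}^1(K_1\cap K_2)=0$ enters only afterwards, via Alexander duality in the plane: it says $K_1\cap K_2$ does not separate $S$, hence has arbitrarily small neighbourhoods in $S$ that are finite unions of disjoint disks, and this is what keeps the extruded and capped pieces inside the prescribed neighbourhoods $V_1,V_2$ (the smallness condition). So to complete your route you would need a construction that makes the overlap homologically trivial by construction, rather than an absorption argument chasing the continuity property of \v{C}ech cohomology.
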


The tameness condition on the decomposition $K = K_1 \cup K_2$ is described in the next subsection. Roughly, it guarantees that the decomposition $K = K_1 \cup K_2$ can be realized geometrically, which is necessary to perform the \emph{splitting construction} that is the basis of the subadditivity property. Given a $pm$--neighbourhood $N$ of $K$, the splitting construction produces two $pm$--neighbourhoods $N_1$ and $N_2$ of $K_1$ and $K_2$ respectively such that ${\rm rk}\ H_1(N_1) +  {\rm rk}\ H_1(N_2) \leq {\rm rk}\ H_1(N)$. We describe this construction informally in the second subsection and then more carefully in the third subsection.
\smallskip

The inequality in Theorem \ref{teo:sub} may be strict. For instance, denote $F_1$ and $F_2$ each of the two symmetric halves of the Fox--Artin arc $F$ shown in Figure \ref{fig:wild_2}. Both $F_1$ and $F_2$ are cellular \cite[Example 1.2, p. 983]{foxartin1}, so $r(F_1) = r(F_2) = 0$. However $F$ is not cellular, and we have the strict inequality $r(F) \geq 1 > 0 + 0 = r(F_1) + r(F_2)$.

\subsection{Tame decompositions} We need to introduce some notation:

\begin{enumerate}
	\item[] $Q_1$ is the parallelepiped $[-1,1] \times [-1,1] \times [-1,0]$,
	\item[] $Q_2$ is the parallelepiped $[-1,1] \times [-1,1] \times [0,1]$,
	\item[] $Q$ is the cube $Q_1 \cup Q_2$,
	\item[] $S$ is the square $Q_1 \cap Q_2 = [-1,1] \times [-1,1] \times \{0\}$,
	\item[] $\dot{S}$ denotes the square $S$ minus its edges.
\end{enumerate}

A schematic view of all these items is shown in Figure \ref{fig:tameaxes}. The $z$ axis has been chosen to be horizontal and contained in the plane of the paper for notational convenience.

\begin{figure}[h]
\begin{pspicture}(0,0)(7.4,5)
	\rput[bl](0,0){\scalebox{0.6}{\includegraphics{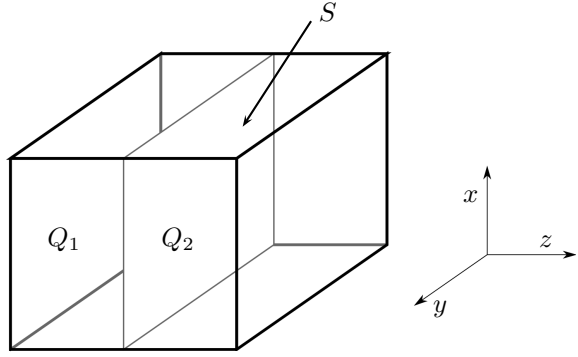}}}
	\psline{->}(4,4.4)(3.1,3) \rput[lb](4.1,4.4){$S$}
	\rput(0.75,1.5){$Q_1$} \rput(2.25,1.5){$Q_2$}
	\rput[bl](7,1.4){$z$} \rput[b](6.1,2){$x$} \rput[t](5.7,0.7){$y$}
\end{pspicture}
\caption{The setup for Definition \ref{def:tame} \label{fig:tameaxes}}
\end{figure}

\begin{definition} \label{def:tame} A decomposition $K = K_1 \cup K_2$ is \emph{tame} if
\begin{enumerate}
	\item[({\it i}\/)] $K \cap S = K_1 \cap K_2 \subseteq \dot{S}$,
	\item[({\it ii}\/)] $K_1 \cap Q \subseteq Q_1$ and $K_2 \cap Q \subseteq Q_2$.
\end{enumerate}
For the sake of convenience we shall widen our definition slightly and say that a decomposition $K = K_1 \cup K_2$ is tame if there exists an ambient homeomorphism $h : \mathbb{R}^3 \longrightarrow \mathbb{R}^3$ that takes $K$, $K_1$ and $K_2$ onto sets that satisfy ({\it i}\/) and ({\it ii}\/) above.
\end{definition}

The intuitive content of Definition \ref{def:tame} is, loosely speaking, that $S$ realises \emph{geometrically} the purely set-theoretical decomposition $K = K_1 \cup K_2$. The part of $K$ that lies in $Q$ is structured in two ``halves'', $K \cap Q_1$ and $K \cap Q_2$. The first one sits in $Q_1$ and, because of condition ({\it ii}\/), is comprised \emph{exclusively} of points from $K_1$. The second one sits in $Q_2$ and is comprised \emph{exclusively} of points from $K_2$.

\begin{example} \label{ejem:tame} The prototypical example of a tame decomposition is as follows. Let $H \subseteq \mathbb{R}^3$ be the hyperplane \[H := \{(x,y,z) \in \mathbb{R}^3 : z = 0\}\] and denote $H_1$ and $H_2$ the two closed halfspaces into which $H$ separates $\mathbb{R}^3$. For definitiness say \[H_1 := \{(x,y,z) \in \mathbb{R}^3 : z \leq 0\}\ \ \text{ and }\ \ H_2 := \{(x,y,z) \in \mathbb{R}^3 : z \geq 0\}.\]

Let $K \subseteq \mathbb{R}^3$ be a compact set that meets $H$ and let $K_1 := K \cap H_1$ and $K_2 := K \cap H_2$. Then $K = K_1 \cup K_2$ is a tame decomposition. Indeed, after scaling $K$ down with an ambient homeomorphism we may assume that $K_1 \cap K_2 \subseteq \dot{S}$. Then conditions ({\it i}\/) and ({\it ii}\/) in Definition \ref{def:tame} are trivially fulfilled.
\end{example}

Example \ref{ejem:tame} provides a good picture to have in mind and is enough for simple si{\-}tuations such as the decompositions considered in the proof of Theorem \ref{teo:wild_arc}. However, a useful definition of tameness should be of a local nature. For instance, consider the decomposition $B^* = B^*_1 \cup B^*_2$ shown in Figure \ref{fig:dec_B}. $B^*_1$ and $B^*_2$ meet neatly along a disk, and locally (near that disk) the situation is just as the one described in Example \ref{ejem:tame}, so we want to say that this decomposition is tame. But far away from the disk the sets $B^*_1$ and $B^*_2$ become entangled, so it is not possible (not even after performing an ambient homeomorphism) to have $B^*_1$ contained in $H_1$ and $B^*_2$ contained in $H_2$. This is why a local version of Example \ref{ejem:tame} is needed, and that is exactly what motivates Definition \ref{def:tame}. Notice that according to this definition $B^* = B^*_1 \cup B^*_2$ is indeed a tame decomposition.
\smallskip

A decomposition may not be tame either because $K_1 \cap K_2$ cannot be included in a square or because any cut along $K_1 \cap K_2$ does not separate $K$ locally into $K_1$ and $K_2$ lying on different sides of $K_1 \cap K_2$. The first possibility will be illustrated later on in Example \ref{exam:not_tame2}. The second one is easier and we give an example now.

\begin{example} \label{ejem:not_tame1} Consider the continuum $K$ shown in Figure \ref{fig:not_tame1}. It is the union of two continua $K_1$ and $K_2$, shown respectively in dark and light gray, whose intersection is the black square at the back of the figure, barely visible.

\begin{figure}[h]
\begin{pspicture}(0,0)(6.8,5.7)
	\rput[bl](0,0){\scalebox{0.75}{\includegraphics{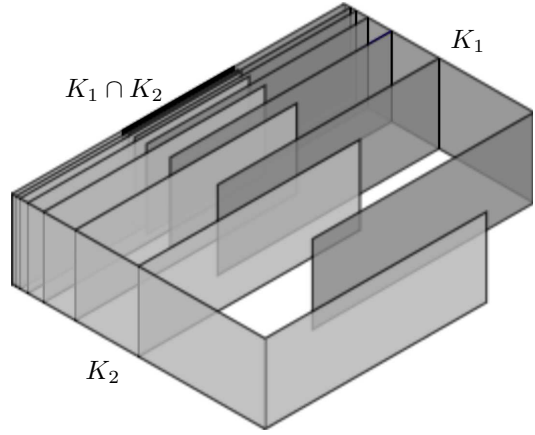}}}
	\rput(1.2,0.8){$K_2$} \rput(6,5.2){$K_1$} \rput[br](2,4.4){$K_1 \cap K_2$}
\end{pspicture}
\caption{A decomposition that is not tame \label{fig:not_tame1}}
\end{figure}

It is clearly possible to find a homeomorphism that takes $K_1 \cap K_2$ into the square $S$ in such a way that the first condition of Definition \ref{def:tame} is fulfilled. We leave it to the reader to show that it is not possible to achieve the second condition of Definition \ref{def:tame}.
\end{example}

Notice that the subadditivity property holds in Example \ref{ejem:not_tame1} (clearly $r(K_1) = r(K_2) = r(K) = 0$) even though the decomposition $K = K_1 \cup K_2$ is not tame. Later on, in Example \ref{exam:not_tame2}, we will present a decomposition that is not tame and where the subadditivity property does not hold.

\subsection{The splitting construction (informal version)} This is the geometric basis for the subadditivity property. It may be summarized as follows:

\begin{theorem} \label{teo:aux} (Notation and hypotheses as in Theorem \ref{teo:sub}) Given $V_1$ and $V_2$ open neighbourhoods of $K_1$ and $K_2$, there exists an open neighbourhood $V$ of $K$ such that any $pm$--neighbourhood $N$ of $K$ contained in $V$ can be used to construct two $pm$--neighbourhoods $N_1$ and $N_2$ of $K_1$ and $K_2$ with the following properties:
\begin{enumerate}
	\item[($S_1)$] ${\rm rk}\ H_1(N_1) + {\rm rk}\ H_1(N_2) \leq {\rm rk}\ H_1(N)$,
	\item[($S_2)$] $N_1 \subseteq V_1$ and $N_2 \subseteq V_2$.
\end{enumerate}
\end{theorem}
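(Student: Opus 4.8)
The heart of the matter is to cut a given $pm$--neighbourhood $N$ of $K$ into two pieces along a carefully chosen polyhedral surface. The plan is to produce a compact, two-sided, properly embedded polyhedral surface $G\subseteq N$ which is a disjoint union of $2$--disks, which contains $K_1\cap K_2$ in its interior, and which separates $N$ into two sides whose closures $N_1$ and $N_2$ are polyhedral $3$--manifolds with $N_1\cup N_2=N$, $N_1\cap N_2=G$, $K_1\subseteq N_1\subseteq V_1$ and $K_2\subseteq N_2\subseteq V_2$. Granting this, $(S_2)$ holds by construction and $(S_1)$ is immediate from the Mayer--Vietoris sequence of the triad $(N;N_1,N_2)$:
\[0=H_1(G)\longrightarrow H_1(N_1)\oplus H_1(N_2)\longrightarrow H_1(N)\longrightarrow\cdots,\]
since then $H_1(N_1)\oplus H_1(N_2)$ injects into $H_1(N)$ and thus ${\rm rk}\,H_1(N_1)+{\rm rk}\,H_1(N_2)\le{\rm rk}\,H_1(N)$ --- all ranks being finite because every space in sight is a polyhedron.

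To arrange this I would first use the tameness of the decomposition together with the invariance property (Theorem \ref{teo:inv}) to assume that conditions (i) and (ii) of Definition \ref{def:tame} hold literally, so that $K_1\cap K_2=K\cap S\subseteq\dot S$ and the part of $K$ inside $Q$ is genuinely split by the square $S$. Two remarks then guide the choice of $V$. First, because $K_1\cap K_2\subseteq\dot S\subseteq{\rm int}\,Q$, the compacta $K_1\setminus{\rm int}\,Q$ and $K_2\setminus{\rm int}\,Q$ are disjoint, so I fix disjoint open sets $W_1,W_2$ with $K_j\setminus{\rm int}\,Q\subseteq W_j\subseteq V_j$; and near $\partial Q$ the parts of $K$ lying in $\{z<0\}$ and in $\{z>0\}$ can be absorbed into $W_1$ and $W_2$ respectively, so that the ``$z$--sign'' cut inside $Q$ glues consistently to the $W_1$/$W_2$ splitting outside $Q$. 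Second, since $\check{H}^1(K_1\cap K_2)=0$, Alexander duality in the plane $\{z=0\}$ shows that $K_1\cap K_2$ does not separate that plane; a standard argument then yields that $K_1\cap K_2$ has arbitrarily small neighbourhoods in $\{z=0\}$ which are disjoint unions of $2$--disks. Using these I would take $V$ to be a small open neighbourhood of $K$ with $V\subseteq W_1\cup W_2\cup{\rm int}\,Q$, with $V\cap Q\cap\{z\le 0\}\subseteq V_1$ and $V\cap Q\cap\{z\ge 0\}\subseteq V_2$, disjoint from a neighbourhood of the edges of $S$, and small enough that $V\cap\{z=0\}$ near the square lies inside a prescribed planar disk-neighbourhood of $K_1\cap K_2$.

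Now let $N\subseteq V$ be a $pm$--neighbourhood of $K$. After a tiny ambient isotopy --- which changes neither $N$ up to homeomorphism nor the containments above --- I may assume $N$ meets the plane $\{z=0\}$ transversally. Since $K_1\cap K_2\subseteq{\rm int}\,N$ and $K_1\cap K_2$ has planar disk-neighbourhoods, I pick a disjoint union of $2$--disks $\hat{D}\subseteq\{z=0\}\cap{\rm int}\,N$ that is a neighbourhood of $K_1\cap K_2$ in $\{z=0\}$ and is slightly larger than $K_1\cap K_2$, so that $\partial\hat{D}$ misses $K$; I then push $\partial\hat{D}$ out to $\partial N$ inside $N\setminus K$ (possible because $\partial\hat{D}$ lies away from $K$), sweeping out an annulus that, glued to $\hat{D}$, gives a properly embedded disjoint union of disks $G$. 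A check --- using condition (ii), which puts $K_1$ and $K_2$ on opposite sides of the plane near $S$, and the disjointness of $W_1,W_2$ away from $Q$ --- shows that $G$ separates $N$ into the required $N_1\supseteq K_1$ and $N_2\supseteq K_2$, with $N_1\subseteq V_1$ and $N_2\subseteq V_2$ by the choice of $V$. Properties $(S_1)$ and $(S_2)$ then follow as above.

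The main obstacle is exactly this geometric step: exhibiting a planar disk-neighbourhood of the possibly wild set $K_1\cap K_2$ as the ``flat part'' of an honest properly embedded polyhedral surface $G$ in $N$ that (a) is a union of disks, so that $H_1(G)=0$, (b) actually separates $N$ with $K_1$ and $K_2$ on opposite sides, and (c) yields sides $N_1,N_2$ contained in $V_1,V_2$ --- all compatibly with the trivial splitting of $N$ outside the cube. This is where conditions (i) and (ii) of the tameness hypothesis are indispensable: (i) confines $K_1\cap K_2$ to the flat square, which is precisely what allows the separating surface to be taken flat (hence a union of disks) near $K_1\cap K_2$ in spite of the possible wildness of $K_1\cap K_2$, while (ii) guarantees that away from $S$ the two halves of $K$ occupy genuinely opposite sides, so the flat piece of $G$ can be extended without obstruction. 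Once the surface is in place, the homological conclusion is a routine application of Mayer--Vietoris.
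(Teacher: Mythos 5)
Your argument rests on an intermediate claim that is strictly stronger than the theorem and is in fact false in general: that the splitting can be carried out \emph{inside} $N$, i.e.\ that there is a properly embedded disjoint union of $2$--disks $G \subseteq N$ with $N = N_1 \cup N_2$, $N_1 \cap N_2 = G$, $K_1 \subseteq N_1 \subseteq V_1$ and $K_2 \subseteq N_2 \subseteq V_2$. The step ``push $\partial\hat D$ out to $\partial N$ inside $N\setminus K$, sweeping out an annulus'' is where this breaks down, and it cannot be repaired, because such a $G$ need not exist. Concretely, let $K$ be a vertical segment meeting $S$ at a single interior point $p$, with $K_1,K_2$ its lower and upper halves, and let $V_1,V_2$ be thin neighbourhoods of $K_1,K_2$, so that $V_1\cap V_2$ is contained in a small ball $B$ about $p$. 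Whatever $V$ you choose, take $N\subseteq V$ to be a thin tube around $K$ together with a long, thin coaxial shell around that tube, attached to it only near the two far ends of $K$; then $N$ is a polyhedral $pm$--neighbourhood of $K$ homeomorphic to a ball minus an open unknotted solid--torus cavity, with ${\rm rk}\,H_1(N)=1$. Any admissible $G$ satisfies $G\subseteq N_1\cap N_2\subseteq V_1\cap V_2\subseteq B$, so each of the two components of $N\setminus B$ is forced wholesale into $N_1$ (the lower one, since it contains points of $K_1$) or $N_2$ (the upper one). Hence $G$ must separate, inside the shell, its lower end from its upper end. But the part of the shell inside $B$ is a solid torus whose two wall annuli (the portions of $\partial N$ it carries) have longitudinal cores; a mod $2$ duality argument in that piece relative to its walls shows that any surface with boundary in the walls which separates the two ends of the shell must have a boundary component essential in a wall annulus, while every disk of $G$ meeting the shell lies entirely in that solid--torus piece (its boundary is a single circle on $\partial N\cap B$) and a longitude bounds no disk in a solid torus. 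So no union of disks $G$ exists for this $N$, even though the conclusion of the theorem does hold for it. (Note also that, even when it exists, a properly embedded union of disks need not separate $N$ at all --- a meridian disk of a solid torus does not --- so the separation ``check'' you defer is itself substantive.)

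This is exactly why the paper does not cut $N$ along a surface. Instead it cuts $N$ along the square into $M_1,M_2$, extrudes the components $C_i$ of $N\cap S$ to different heights (nested components less far), and caps their holes by thickened disks $C_i^*$ which in general lie \emph{outside} $N$ (in the example above the caps cross the cavity, both above and below the plane); property $(S_1)$ is then obtained not from a decomposition of $N$ but by gluing $N_1$ and $N_2$ abstractly along the disks $\hat C_i$ and applying Mayer--Vietoris twice (once to the glued manifold, once to compare it with $\hat M \cong N$), and the hypothesis $\check H^1(K_1\cap K_2)=0$ enters only in $(S_2)$, via Lemma~\ref{lem:disks}, to keep the added material inside $V_1\cap V_2$. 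Your Mayer--Vietoris deduction of $(S_1)$ from the existence of $G$ is correct as far as it goes, and your use of Lemma~\ref{lem:disks} for the flat part near $K_1\cap K_2$ is in the right spirit, but the central geometric claim on which your proof rests is unavailable, so the proposal has a genuine gap at its key step.
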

\begin{proof}[Proof of Theorem \ref{teo:sub} from Theorem \ref{teo:aux}.] If $r(K) = \infty$ there is nothing to prove, for the inequality $r(K_1) + r(K_2) \leq r(K)$ holds automatically. So assume $r(K) < \infty$ and choose a $pm$--neighbourhood basis $\{N_k$\} of $K$ such that ${\rm rk}\ H_1(N_k) = r(K)$ for every $k$. This exists by Proposition \ref{prop:easy1}.{\it ii}.

Using Theorem \ref{teo:aux} each $N_k$ gives rise to two $pm$--neighbourhoods $N_{k,1}$ and $N_{k,2}$ of $K_1$ and $K_2$ such that \[{\rm rk}\ H_1(N_{k,1}) + {\rm rk}\ H_1(N_{k,2}) \leq r(K)\] for every $k$. Moreover, by property ($S_2$) we may assume that $\{N_{k,1}\}$ and $\{N_{k,2}\}$ are neighbourhood bases of $K_1$ and $K_2$ respectively. Hence by Proposition \ref{prop:easy1}.{\it i}. \[r(K_1) \leq \liminf_{k \rightarrow \infty}\ {\rm rk}\ H_1(N_{k,1}) \ \ \text{ and } \ \ r(K_2) \leq \liminf_{k \rightarrow \infty}\ {\rm rk}\ H_1(N_{k,2}).\]

Thus \begin{multline*} r(K_1) + r(K_2) \leq \liminf_{k \rightarrow \infty}\ {\rm rk}\ H_1(N_{k,1}) + \liminf_{k \rightarrow \infty}\ {\rm rk}\ H_1(N_{k,2}) \\ \leq \liminf_{k \rightarrow \infty}\ ({\rm rk}\ H_1(N_{k,1}) + {\rm rk}\ H_1(N_{k,2})) \leq r(K)\end{multline*} which proves Theorem \ref{teo:sub}.
\end{proof}

The splitting process behind Theorem \ref{teo:aux} has some delicate points. In order to make the essential ideas easier to grasp we first describe it informally here and postpone the details to the next section.

For simplicity let us set ourselves in the situation of Example \ref{ejem:tame}. Thus the compact set $K$ is split in two halves $K_1 = K \cap H_1$ and $K_2 = K \cap H_2$ by the vertical hyperplane $H : z = 0$. Let $N$ be a small $pm$--neighbourhood of $K$. We are going to show how to construct $N_1$ and $N_2$ from $N$.
\smallskip

\fbox{Step 1} We split $N$ along $H$ letting $M_1 := N \cap H_1$ and $M_2 := N \cap H_2$. Clearly $K_1 \subseteq M_1$ and $K_2 \subseteq M_2$, but notice $M_1$ and $M_2$ are not neighbourhoods of $K_1$ and $K_2$ because the points of $K_1 \cap K_2$ do not belong to the interior of $M_1$ or $M_2$. For technical reasons it is convenient that $N \cap H$ be a $2$--manifold, which can always be achieved (this will be proved later on). See Figure \ref{fig:corte1}.{\it a}.
\smallskip

{\bf Notation.} It is a well known consequence of the Jordan--Sch\"onflies theorem that a compact connected $2$--manifold $C$ contained in the plane is a \emph{disk with holes}. More precisely, there exist a disk $\hat{C}$ and mutually disjoint disks $D_1, \ldots, D_r \subseteq {\rm int}\ \hat{C}$ such that $C = \hat{C} - \bigcup_{i=1}^r {\rm int}\ D_i$. Observe that $\hat{C}$ is the smallest disk that contains $C$. We call the set $C^* := \bigcup_{i=1}^r D_i$ the \emph{capping set} for $C$; by its very definition it is a disjoint union of disks with the property that $C \cup C^* = \hat{C}$. Notice that $C \cap C^*$ is the union of all the boundary components of $C$ except for the outermost one. The process of recovering the disk $\hat{C}$ from $C$ by taking its union with $C^*$ is usually called \emph{capping the holes} of $C$.
\smallskip

\fbox{Step 2} Consider the intersection $N \cap H$. Because it is a compact $2$--manifold contained in a plane (namely $H$), each of its connected components $C_1, C_2, \ldots, C_r$ is a disk with holes. For illustrative purposes let us assume that $N \cap H$ has exactly four connected components: two disks $C_1$ and $C_2$, an annulus $C_3$, and a disk with two holes $C_4$ (see Figure \ref{fig:corte1}.b). The labelling of the components is not entirely arbitrary and requires some care (details will be given later on).

\begin{figure}[h]
\begin{pspicture}(0,0)(10.4,6.2)
	\rput[bl](0,0){\scalebox{0.6}{\includegraphics{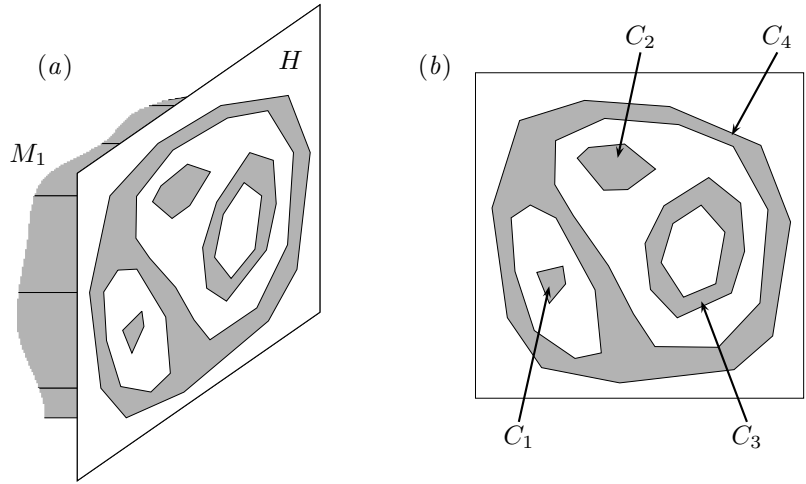}}}
	\psline{->}(6.6,0.8)(7,2.6) \rput(6.6,0.6){$C_1$}
	\psline{->}(8.2,5.7)(7.9,4.3) \rput(8.2,5.9){$C_2$}
	\psline{->}(9.6,0.8)(9,2.4) \rput(9.6,0.6){$C_3$}
	\psline{->}(10,5.7)(9.4,4.6) \rput(10,5.9){$C_4$}
	\rput[br](0.4,4.2){$M_1$} \rput(3.6,5.6){$H$}
	\rput(0.5,5.5){$({\it a}\/)$} \rput(5.5,5.5){$({\it b}\/)$}
\end{pspicture}
\caption{Cutting $N$ along $H$ \label{fig:corte1}}
\end{figure}

Notice how the components $C_i$ may well be nested. This is precisely what makes the construction somewhat delicate. 
\smallskip

{\bf Notation.} Suppose $a<b$ are a real numbers and $A$ is a subset of $H$. We shall allow ourselves some freedom and denote \[A \times [a,b] = \{(x,y,z) : (x,y,0) \in A \text{ and } a \leq z \leq b\}.\] If the reader thinks of $H$ as the $xy$ plane and $\mathbb{R}^3$ as $H \times \mathbb{R}$, the above notation is self explanatory. Geometrically, $A$ is a $2$--dimensional object and $A \times [a,b]$ is a $3$--dimensional object that results from \emph{extruding} $A$ along the $z$ axis.
\smallskip

\fbox{Step 3} We are going to construct $N_1$ by suitably enlarging $M_1$ in a two stage process. The construction of $N_2$ from $M_2$ is entirely analogous and it will not be described explicitly.

{\it First stage.} The intersection $M_1 \cap H$ is the same as $N \cap H$, so it consists of the four components $C_1, C_2, C_3$ and $C_4$ described in the previous step. At this first stage we simply extrude each $C_i$ along the $z$ axis an amount $\nicefrac{i}{4}$. Namely, we enlarge $M_1$ to $\hat{M}_1$ defined as \[\hat{M}_1 := M_1 \cup (C_1 \times [0,\nicefrac{1}{4}]) \cup (C_2 \times [0,\nicefrac{2}{4}]) \cup (C_3 \times [0,\nicefrac{3}{4}]) \cup (C_4 \times [0,1]).\] Figure \ref{fig:extru}.{\it a} shows the extrusion of just the first three components. The final result can be seen in Figure \ref{fig:extru}.{\it b}.

\begin{figure}[h]
\begin{pspicture}(0,0)(11,6.4)
	\rput[bl](0,0){\scalebox{0.6}{\includegraphics{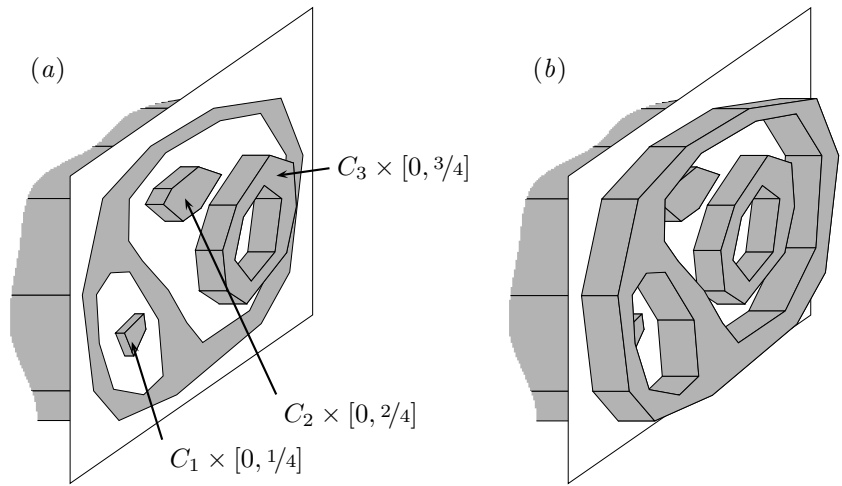}}}
	\psline{->}(2,0.6)(1.6,1.9) \rput[tl](2.1,0.5){$C_1 \times [0,\nicefrac{1}{4}]$}
	\psline{->}(3.5,1.2)(2.3,3.8) \rput[tl](3.6,1.1){$C_2 \times [0,\nicefrac{2}{4}]$}
	\psline{->}(4.2,4.2)(3.5,4.1) \rput[l](4.3,4.2){$C_3 \times [0,\nicefrac{3}{4}]$}
	\rput(0.5,5.5){$({\it a}\/)$} \rput(7.1,5.5){$({\it b}\/)$}
\end{pspicture}
\caption{The extrusion process \label{fig:extru}}
\end{figure}

{\it Second stage.} Formally we should start with $C_1$ and $C_2$ but, because they have no holes, the construction we are about to describe is trivial for them. Hence we consider the annulus $C_3$ and its capping set $C_3^*$, which is a single disk. We attach a thickened copy of $C_3^*$ at the right end of $C_3 \times [0,\nicefrac{3}{4}]$, as a plug at the end of a pipe. Figure \ref{fig:cap1} shows the set \[\hat{M}_1 \cup (C_3^* \times [\nicefrac{3}{4}-\nicefrac{1}{8},\nicefrac{3}{4}]),\] which in panel ({\it a}\/) has been pictured out of its position for clarity and in panel ({\it b}\/) has been drawn at its definitive location, neatly fitting at the end of $C_3 \times [0,\nicefrac{3}{4}]$.

\begin{figure}[h]
\begin{pspicture}(0,0)(11,11)
	\rput[bl](0,0){\scalebox{0.6}{\includegraphics{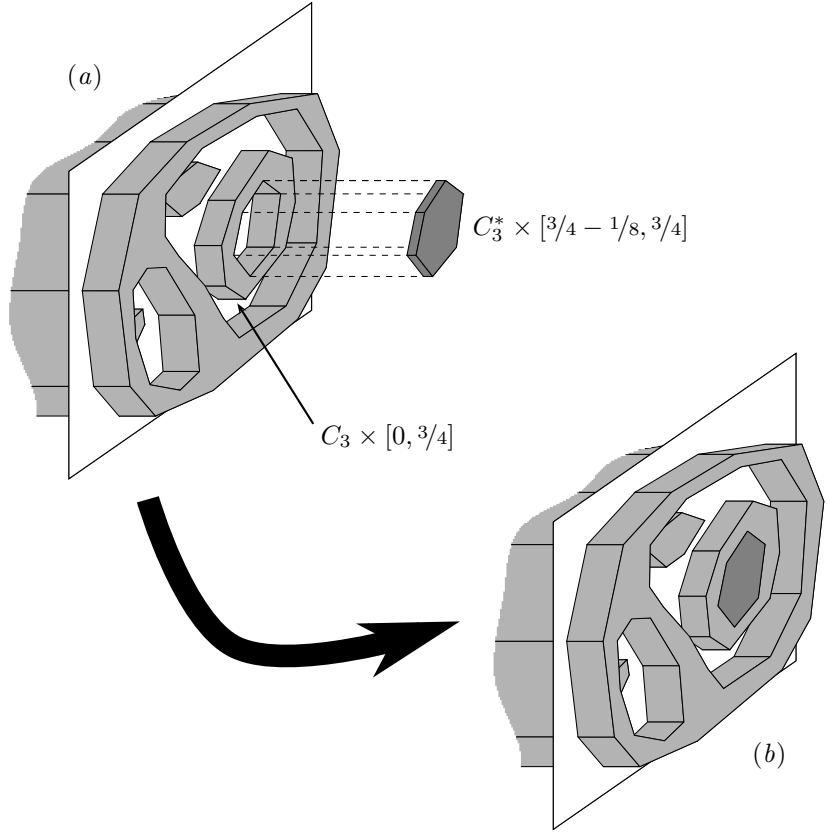}}}
	\psline{->}(4,5.4)(3,7) \rput[tl](4.1,5.4){$C_3 \times [0,\nicefrac{3}{4}]$}
	\rput[l](6.1,8){$C^*_3 \times [\nicefrac{3}{4}-\nicefrac{1}{8},\nicefrac{3}{4}]$}
	\rput(1,10){({\it a}\/)} \rput(10,1){({\it b}\/)}
\end{pspicture}
	\caption{Capping the end of the extruded $C_3$ \label{fig:cap1}}
\end{figure}

Notice that $C_3^* \times [\nicefrac{3}{4}-\nicefrac{1}{8},\nicefrac{3}{4}]$ is indeed a thickened copy of $C_3^*$ aligned flush with the right end of $C_3 \times [0,\nicefrac{3}{4}]$ because both extend up to the plane $z=\nicefrac{3}{4}$. For the sake of brevity we will sometimes refer to these sets as the \emph{thickened} $C_3^*$ and the \emph{extruded} $C_3$, respectively.

The same process has to be done now with $C_4$, whose capping set $C_4^*$ consists of two disks. Figure \ref{fig:cap2} shows the set \[N_1 := \hat{M}_1 \cup (C_3^* \times [\nicefrac{3}{4}-\nicefrac{1}{8},\nicefrac{3}{4}]) \cup (C_4^* \times [1-\nicefrac{1}{8},1])\] which is precisely the manifold $N_1$ that we were looking for. This finishes the splitting construction.

\begin{figure}[h]
\begin{pspicture}(0,0)(11,11)
	\rput[bl](0,0){\scalebox{0.6}{\includegraphics{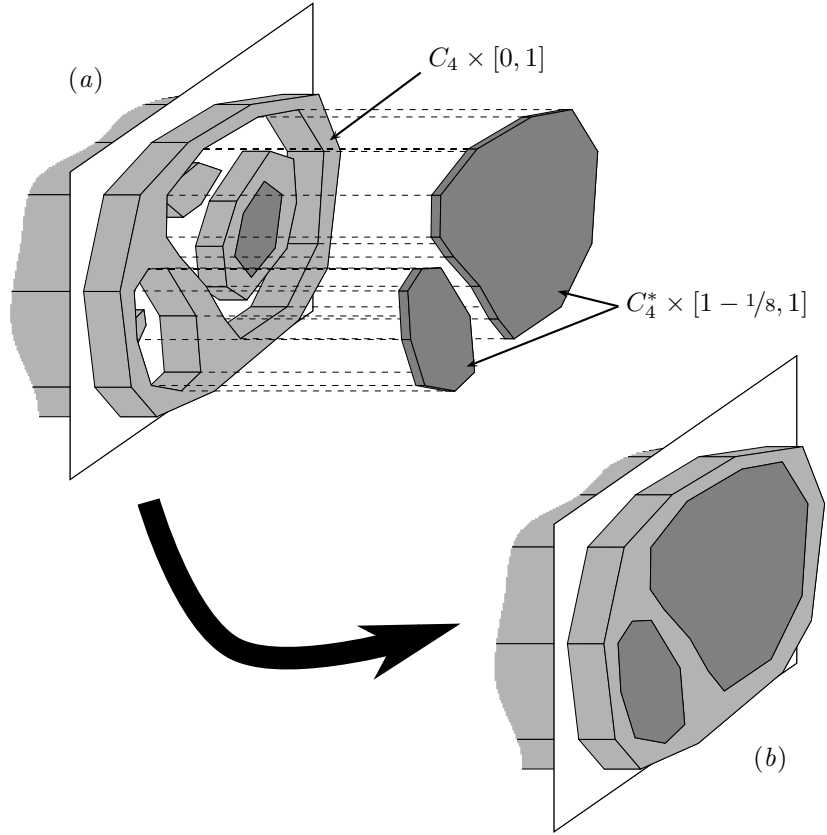}}}
	\psline{->}(5.4,10.1)(4.2,9.2) \rput[bl](5.5,10.1){$C_4 \times [0,1]$}
	\psline{->}(8,7)(6,6.2)\psline{->}(8,7)(7.2,7.2) \rput[lt](8.1,7.2){$C_4^* \times [1-\nicefrac{1}{8},1]$}
	\rput(1,10){({\it a}\/)} \rput(10,1){({\it b}\/)}
\end{pspicture}
	\caption{Capping the holes in the extruded $C_4$ \label{fig:cap2}}
\end{figure}

Observe that the amount of extrusion of the $C_i$ is smaller for the innermost components and bigger for the outermost ones. This is important to guarantee that, as is the case in our example, the thickened $C_4^*$ intersects neither the extruded $C_1$ and $C_2$ nor the extruded and capped $C_3$ (it is instructive to think what would have happened if $C_3$ had been extruded farther to the right than $C_4$). This geometric fact is required to prove the inequality \[{\rm rk}\ H_1(N_1) + {\rm rk}\ H_1(N_2) \leq {\rm rk}\ H_1(N),\] which is where the subadditivity property ultimately stems from.

\subsection{The splitting construction (formal version)} We now abandon the simple situation just considered and set ourselves in the case of a completely general tame decomposition $K = K_1 \cup K_2$.
\smallskip

\fbox{Step 1} Recall that a subset $X$ of $Y$ is collared in $Y$ if there exists an embedding $c : X \times [0,1) \longrightarrow Y$ onto an open neighbourhood of $X$ in $Y$ and such that $c(x,0) = x$ for every $x \in X$ (see for instance the paper by Brown \cite[p. 332]{brown4}).

\begin{lemma} \label{lem:transverse} Let $N \subseteq \mathbb{R}^3$ be a compact polyhedral $3$--manifold such that $N \cap S \subseteq \dot{S}$. Assume that $N$ has no vertices in $S$. Then $N \cap S$ is a compact $2$--manifold that is collared both in $N \cap Q_1$ and in $N \cap Q_2$.
\end{lemma}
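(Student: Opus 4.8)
The plan is to exploit the piecewise linearity of $N$ and $S$ together with the hypothesis that $N$ has no vertices on $S$, so that near $S$ the manifold $N$ looks locally like a product. First I would fix a triangulation $T$ of $N$ that is compatible with the polyhedron $S$, in the sense that $N \cap S$ is a subcomplex of $T$; this is possible because both $N$ and $S$ are polyhedra and finite intersections of polyhedra are polyhedra, so by a standard general-position/subdivision argument one can triangulate $N$ so that $S$ (or rather the plane containing $S$) cuts it simplicially. The hypothesis $N \cap S \subseteq \dot S$ means the intersection avoids the edges of the square $S$, so locally around each point of $N \cap S$ the square $S$ behaves like the full plane $H = \{z = 0\}$, and I may as well argue with $H$ in place of $S$.

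Next I would show $N \cap H$ is a $2$-manifold. Take a point $p \in N \cap H$; since $N$ has no vertices on $H$, the point $p$ lies in the interior of an edge, or the interior of a $2$-simplex, or the interior of a $3$-simplex of $T$. In each case I examine the link of $p$ in $N$ and how the hyperplane $z = 0$ slices it. The key is that an edge of $T$ meeting $H$ at an interior point of that edge is transverse to $H$ (it cannot lie in $H$, for then its endpoints—vertices of $T$—would lie in $H$, contradicting the no-vertices hypothesis; so the edge crosses $H$ at exactly one point, and its two vertices lie strictly on opposite sides). From this one deduces that each simplex of $T$ meeting $H$ does so in a ``slab-like'' fashion, and a small neighbourhood of $p$ in $N$ is PL-homeomorphic to $(\mathbb{R}^2 \text{ or } \mathbb{R}^2_+) \times \mathbb{R}$ with $H$ corresponding to $(\mathbb{R}^2 \text{ or } \mathbb{R}^2_+) \times \{0\}$. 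Hence $N \cap H$ is locally $\mathbb{R}^2$ (at interior points of $N$) or $\mathbb{R}^2_+$ (at boundary points of $N$), i.e. a compact $2$-manifold, and moreover $H$ is bicollared in $N$: there is an embedding $H \cap N \times (-1,1) \to N$ realising this local product structure globally, obtained by patching the local slabs using the triangulation and a PL partition-of-unity type argument, or simply by pushing along the $z$-direction within each simplex and checking compatibility on overlaps.

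Finally, the bicollar $c : (N \cap H) \times (-1,1) \to N$ restricts to collars $(N \cap H) \times [0,1) \to N \cap H_1^{-}$ and $(N \cap H) \times (-1,0] \to N \cap H_2^{+}$ of $N \cap H$ inside $N \cap \{z \le 0\}$ and $N \cap \{z \ge 0\}$ respectively; reparametrising gives the required collarings. Intersecting with $Q_1$ and $Q_2$ and using $N \cap S \subseteq \dot S$ (so that the collar, taken short enough, stays inside $Q_1$, resp.\ $Q_2$, near $S$) yields that $N \cap S = N \cap H$ is collared in $N \cap Q_1$ and in $N \cap Q_2$. The main obstacle I expect is the bookkeeping in constructing the global bicollar from the local PL slab pictures—ensuring the locally defined ``push in the $z$-direction'' maps agree on the overlaps of simplices and assemble into a genuine embedding; this is routine PL topology (one may alternatively cite the PL bicollaring theorem for a two-sided, locally flat codimension-one polyhedral submanifold), but it is where the no-vertices-on-$S$ hypothesis is really used and where care is needed.
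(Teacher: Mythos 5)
Your overall strategy --- transversality of the triangulation to the plane forces a local product structure near $S$, which gives both the $2$--manifold statement and the collars --- is the right one and close in spirit to the paper's, but as written there are two genuine missteps. First, your opening move is internally inconsistent: if you retriangulate $N$ so that $N \cap S$ is a subcomplex, that triangulation necessarily has vertices lying in $N \cap S \subseteq S$, so you cannot afterwards argue ``since $N$ has no vertices on $H$'' for the very same complex; the no-vertices hypothesis refers to the given polyhedral structure, and you should simply work with it (as the paper does) --- the compatibility with $S$ is never actually used in the rest of your argument. Second, you silently upgrade ``no vertices in $S$'' to ``no vertices on the plane $H$'' and end with the identity $N \cap S = N \cap H$, which is false in general: nothing prevents $N$ from meeting $H$ outside the square, having vertices there, or even containing simplices inside $H$ away from $S$, so $N \cap H$ need not be a $2$--manifold and a global bicollar of $N \cap H$ in $N$ need not exist. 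The missing (and easy) localization is this: if a simplex of $N$ meets $S$ --- hence meets $\dot{S}$, because $N \cap S \subseteq \dot{S}$ --- and also contained a point of $H - S$, then the straight segment joining these two points would lie in that simplex and in the plane $H$, hence would cross the frontier of the square at a point of $N \cap S$ outside $\dot{S}$, a contradiction. So every simplex meeting $S$ misses $H - S$, and in particular has no vertex on $H$; with this, your edge-transversality and link/slab analysis is valid at every point of $N \cap S$, and it must be phrased for $N \cap S$ only, producing local collars of $N \cap S$ inside $N \cap Q_1$ and $N \cap Q_2$ (short vertical collars stay in $Q_1$, resp.\ $Q_2$, precisely because $N \cap S \subseteq \dot{S}$).

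Concerning the step you yourself flag as delicate --- assembling the local product pictures into a global collar --- the paper avoids it altogether by invoking Brown's theorem that a locally collared subset is collared (with Connelly's simpler proof), so only the local collars coming from transversality are needed; you could cite the same result instead of patching by hand. The paper also reaches the $2$--manifold conclusion by a different, more global route: since the triangles of $\partial N$ meet $S$ in straight segments and each vertex of $\partial N \cap S$ lies on exactly two such segments, $\partial N \cap S$ is a disjoint union of polygonal simple closed curves, and this set is exactly the frontier of $N \cap S$ in $S$. Your local-link argument is an acceptable alternative once the localization above is in place.
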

\begin{proof} $\partial N$ is a union of triangles that are pairwise disjoint or meet in a single common edge or a single common vertex. Since $\partial N$ has no vertices in $S$, each of its triangles meets $S$ in a straight segment; thus $\partial N \cap S$ is a union of straight segments any two of which are either disjoint or meet in a single common endpoint. Each edge in $\partial N$ belongs to exactly two triangles because $\partial N$ has no boundary, so each vertex in $\partial N \cap S$ belongs to exactly two segments. Consequently $\partial N \cap S$ is a disjoint union of polygonal simple closed curves. Since $\partial N \cap S$ is precisely the topological frontier of $N \cap S$ in $S$, we conclude that the latter is a compact $2$--manifold with boundary.

We now prove that $N \cap S$ is collared in $N \cap Q_1$ (the argument for $N \cap Q_2$ being entirely analogous). By a theorem of Brown \cite[Theorem 1, p. 337]{brown4}, which was given a simpler proof by Connelly \cite{connelly1}, it is enough to show that $N \cap S$ is \emph{locally collared} in $N \cap Q_1$. That is, we need to show that every $p \in N \cap S$ has a neighbourhood $U$ in $N \cap S$ that is collared in $N \cap Q_1$. But this is a straightforward consequence of the fact that each simplex in the triangulation of $N$ meets $S$ transversally.
\end{proof}

\begin{proposition} \label{prop:prep0} Suppose $K = K_1 \cup K_2$ is a tame decomposition. Then $K$ has arbitrarily small $pm$--neighbourhoods $N$ such that:
\begin{enumerate}
	\item[({\it i}\/)] $N \cap S$ is a compact $2$--manifold,
	\item[({\it ii}\/)] $N \cap S$ is collared in $N \cap Q_1$ and in $N \cap Q_2$.
\end{enumerate}
Furthermore, if $r(K) < \infty$ then one can achieve
\begin{enumerate}
	\item[({\it iii}\/)] ${\rm rk}\ H_1(N) = r(K)$.
\end{enumerate}
\end{proposition}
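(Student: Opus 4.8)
The plan is to start from an arbitrary small $pm$-neighbourhood $N_0$ of $K$ and perform a small general-position perturbation so that the resulting $pm$-neighbourhood $N$ satisfies (i) and (ii), while keeping control of $H_1$ to secure (iii). First I would invoke the definition of tameness to assume, after applying an ambient homeomorphism (which changes nothing in the statement, since it preserves all the relevant data), that $K$, $K_1$, $K_2$ already sit so that $K \cap S = K_1 \cap K_2 \subseteq \dot S$ and $K_1 \cap Q \subseteq Q_1$, $K_2 \cap Q \subseteq Q_2$. Then, given any neighbourhood $U$ of $K$, use Lemma \ref{lem:easy} to pick a polyhedral $3$-manifold neighbourhood $N_0 \subseteq U$ of $K$; if moreover $r(K) < \infty$, use Proposition \ref{prop:easy1}.(ii) to choose $N_0$ from a $pm$-neighbourhood basis realising ${\rm rk}\ H_1(N_0) = r(K)$, so (iii) will hold for free as long as the perturbation does not change the homotopy type.

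The key step is to arrange that $N$ has no vertices on the plane $\{z = 0\} \supseteq S$, so that Lemma \ref{lem:transverse} applies. I would fix a triangulation of $N_0$ and translate $N_0$ by a small vector $(0,0,\varepsilon)$ in the $z$-direction: since the finitely many vertices of the triangulation have only finitely many distinct $z$-coordinates, for all sufficiently small $\varepsilon > 0$ none of the translated vertices lies on $\{z=0\}$. Call the translate $N$. For $\varepsilon$ small enough $N$ is still a $pm$-neighbourhood of $K$ contained in $U$ (here one uses compactness of $K$ and that $K$ lies in the interior of $N_0$; note a translation is an ambient homeomorphism, so $N$ is again a polyhedral $3$-manifold, and $K \cap S \subseteq \dot S$ with $S$ having a positive distance to the edges, so the translated $N$ still contains $K$ in its interior). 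One must also check $N \cap S \subseteq \dot S$: since $K \cap S \subseteq \dot S$ is compact and $\dot S$ is open in $S$, a neighbourhood of $K$ in $\mathbb{R}^3$ meets $S$ only inside $\dot S$, and for $\varepsilon$ small $N$ is contained in such a neighbourhood. (If one is worried about $N$ meeting $S$ near $\partial S$ in a region far from $K$, one can first intersect $N_0$ with a slightly shrunk cube or simply choose $N_0$ thin enough; this is a routine adjustment.) Now $N$ has no vertices in $S$, so Lemma \ref{lem:transverse} gives at once that $N \cap S$ is a compact $2$-manifold collared in both $N \cap Q_1$ and $N \cap Q_2$, which is (i) and (ii).

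Finally, for (iii): a translation is a homeomorphism of $\mathbb{R}^3$, so $N$ is homeomorphic to $N_0$ and hence ${\rm rk}\ H_1(N) = {\rm rk}\ H_1(N_0) = r(K)$ when $N_0$ was chosen as above. The main obstacle, and the only point requiring genuine care, is verifying that after the perturbation the set $N$ is \emph{still a neighbourhood of $K$} (not merely of most of $K$) and still satisfies $N \cap S \subseteq \dot S$; this is where one exploits that $K \cap S$ is a compact subset of the open square $\dot S$ and that $K \subseteq {\rm int}\ N_0$, so that all the relevant containments are stable under sufficiently small translations. Everything else is bookkeeping: the homology claim is immediate from homeomorphism invariance, and the manifold and collaring claims are delivered verbatim by Lemma \ref{lem:transverse} once the no-vertices-on-$S$ condition is arranged.
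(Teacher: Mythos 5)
Your proposal is correct and follows essentially the same route as the paper: choose a small $pm$--neighbourhood with ${\rm rk}\ H_1 = r(K)$ (via Proposition \ref{prop:easy1}), push it by a small vertical translation so that no vertex of its triangulation lies in $S$ while it remains a neighbourhood of $K$ inside $U$ with $N \cap S \subseteq \dot{S}$, and then apply Lemma \ref{lem:transverse}, with (iii) preserved because a translation is a homeomorphism. The only difference is cosmetic (translating by $+\varepsilon$ instead of $-\delta$ and your more careful spelling-out of why the containments survive the translation).
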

\begin{proof} Let $U$ be a neighbourhood of $K$ and pick a $pm$--neighbourhood $N$ of $K$ contained in $U$. Since $K \cap S \subseteq \dot{S}$ because of tameness, we can take $N$ so small that $N \cap S \subseteq \dot{S}$. If $r(K) < \infty$, by Proposition \ref{prop:equivalentes} we are entitled to assume that ${\rm rk}\ H_1(N) = r(K)$.

To obtain ({\it i}\/) and ({\it ii}\/) we only need to show that we can choose $N$ satisfying the hypothesis of Lemma \ref{lem:transverse}; namely, that $N$ does not have any vertices in $S$. Since $N$ has only finitely many vertices, there are arbitrarily small values of $\delta \geq 0$ such that $\delta$ is different from the $z$--coordinates of all the vertices of $N$. If $t_{\delta} : \mathbb{R}^3 \longrightarrow \mathbb{R}^3$ denotes the translation $t_{\delta}(x,y,z) := (x,y,z-\delta)$, then $t_{\delta}(N)$ is a polyhedral manifold that has no vertices in $S$ so by Lemma \ref{lem:transverse} conditions ({\it i}\/) and ({\it ii}\/) are satisfied. A judicious choice of sufficiently small $\delta$ will also guarantee that $t_{\delta}(N)$ is still a neighbourhood of $K$ contained in $U$. Clearly $t_{\delta}(N)$ is homeomorphic to $N$ and so ${\rm rk}\ H_1(t_{\delta}(N)) = {\rm rk}\ H_1(N) = r(K)$, which shows that ({\it iii}\/) is also satisfied.
\end{proof}

\begin{proposition} \label{prop:prep1} Suppose $K = K_1 \cup K_2$ is a tame decomposition. Let $U_1$ and $U_2$ be neighbourhoods of $K_1$ and $K_2$ respectively. Then $K$ has arbitrarily small $pm$--neighbourhoods $N$ that can be written as $N = M_1 \cup M_2$, where:
\begin{enumerate}
	\item[({\it i}\/)] $K_1 \subseteq M_1 \subseteq U_1$ and $K_2 \subseteq M_2 \subseteq U_2$,
	\item[({\it ii}\/)] $M_1 \cap M_2 = N \cap S \subseteq \dot{S}$,
	\item[({\it iii}\/)] $M_1 \cap Q \subseteq Q_1$ and $M_2 \cap Q \subseteq Q_2$,
	\item[({\it iv}\/)] $M_1$ and $M_2$ are compact polyhedral $3$--manifolds,
	\item[({\it v}\/)] $N \cap S$ is a $2$--manifold that is collared in $M_1$ and $M_2$.
\end{enumerate}
Furthermore, if $r(K) < \infty$ then one can achieve
\begin{enumerate}
	\item[({\it vi}\/)] ${\rm rk}\ H_1(N) = r(K)$.
\end{enumerate}
\end{proposition}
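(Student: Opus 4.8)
The plan is to obtain $N$ from Proposition~\ref{prop:prep0} and then split it into $M_1$ and $M_2$ by hand, using the following geometric fact: $K_1$ and $K_2$ can only \emph{meet} inside $\dot{S}\subseteq{\rm int}\,Q$, so near $S$ the two halves of $N$ must be separated by the square $S$ itself, whereas away from $Q$ the two halves of $K$ --- hence of a sufficiently small $N$ --- can be separated by disjoint open sets.

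First I would fix the data. Since $K_1\cap K_2=K\cap S\subseteq\dot{S}\subseteq{\rm int}\,Q$, the compacta $K_1\setminus{\rm int}\,Q$ and $K_2\setminus{\rm int}\,Q$ are disjoint, so by Lemma~\ref{lem:easy} choose polyhedral $3$--manifold neighbourhoods $P_i\subseteq U_i$ of $K_i\setminus{\rm int}\,Q$ with $P_1\cap P_2=\varnothing$, and put $W_i:={\rm int}\,P_i$. By Proposition~\ref{prop:prep0} choose a $pm$--neighbourhood $N_0$ of $K$ with $N_0\cap S$ a $2$--manifold collared in $N_0\cap Q_1$ and in $N_0\cap Q_2$, and with ${\rm rk}\,H_1(N_0)=r(K)$ when $r(K)<\infty$; then replace $N_0$ by a \emph{generic} small translate $N$ of it, so that $N$ has no vertices on the plane of $S$ nor on any of the six face-planes of $Q$ --- i.e.\ $N$ is in general position with respect to $Q$. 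For a small enough translation $N$ still enjoys the conclusions of Proposition~\ref{prop:prep0} (apply Lemma~\ref{lem:transverse} to $N$, and recall $N$ is homeomorphic to $N_0$ by a translation), and by taking $N$ small enough we may additionally assume: $N\cap S$ has positive distance to $\partial Q$; $N\setminus{\rm int}\,Q\subseteq W_1\cup W_2$; $N\cap Q_i\subseteq U_i$ (using $K\cap Q_i=K_i\cap Q_i\subseteq U_i$); and $N\cap\partial Q\cap Q_j\cap P_i=\varnothing$ for $i\neq j$ (using $K\cap\partial Q\cap Q_j=K_j\cap\partial Q\cap Q_j\subseteq W_j$, which is disjoint from $P_i$).

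Next I would set \[M_i:=(N\cap Q_i)\cup A_i,\qquad A_i:=\overline{N\setminus Q}\cap P_i\qquad(i=1,2).\] By general position $N\cap Q_1$, $N\cap Q_2$ and $\overline{N\setminus Q}$ are compact polyhedral $3$--manifolds and $N\cap\partial Q$ is a polyhedral $2$--manifold; and $\overline{N\setminus Q}\subseteq W_1\cup W_2$ forces $\overline{N\setminus Q}=A_1\sqcup A_2$, with each $A_i$ a union of connected components of $\overline{N\setminus Q}$. The smallness conditions then yield at once: $N=M_1\cup M_2$; $M_1\cap M_2=N\cap S$ (the terms $A_i\cap(N\cap Q_j)$ with $i\neq j$, and $A_1\cap A_2$, all vanish); $M_i\cap Q=N\cap Q_i\subseteq Q_i$; and $K_i\subseteq M_i\subseteq U_i$. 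Since $A_i$ is at positive distance from $N\cap S$, near $N\cap S$ one has $M_i=N\cap Q_i$, so $N\cap S$ is collared in $M_i$ by the choice of $N$; and clause (vi) is inherited from $N_0$ through the homeomorphism $N_0\cong N$. This disposes of clauses (i), (ii), (iii), (v) and (vi).

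The hard part will be clause (iv): that $M_1$ and $M_2$ are $3$--manifolds. Here $M_i$ is the union of the two polyhedral $3$--manifolds $N\cap Q_i$ and $A_i$ along $(N\cap Q_i)\cap A_i=N\cap(Q_i\cap\partial Q)$, which the general-position choice of $N$ makes a subsurface-with-boundary of the $2$--manifold $N\cap\partial Q$ lying in both $\partial(N\cap Q_i)$ and $\partial A_i$. One must verify that gluing $N\cap Q_i$ to $A_i$ along this common boundary subsurface gives a manifold; the delicate points are the local models of $M_i$ along the edges and vertices of $Q$ and along the frontier of the gluing locus, where $M_i$ is \emph{a priori} a solid quadrant or a half-space cut open along a half-plane and must be recognised as an honest PL half-space. (One can instead try to produce $M_i$ directly as an intersection $N\cap R_i$ with $R_i:=Q_i\cup(P_i\setminus{\rm int}\,Q)$ a single PL $3$--manifold and let general position of $N$ relative to $\partial R_i$ do the work, but this merely relocates the same corner analysis.) All the remaining verifications are routine point-set manipulations with the smallness conditions.
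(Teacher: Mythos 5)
The heart of the proposition is clause ({\it iv}\/), and your proof stops exactly there: you call it ``the hard part'', describe the corner analysis that ``one must verify'' along the edges and vertices of $Q$ and along the frontier of the gluing locus, and never carry it out. As written this is a genuine gap, not a routine verification. Moreover, your preparatory assertion that general position (no vertices of $N$ on the six face-planes of $Q$) already makes $N\cap Q_1$, $N\cap Q_2$ and $\overline{N\setminus Q}$ polyhedral $3$--manifolds is itself unjustified: that condition controls how $\partial N$ meets the open faces of $Q$, but not how it meets the edge lines --- for instance a triangle of $\partial N$ whose plane contains an edge of $Q_1$, with $N$ on the appropriate side, makes $N\cap Q_1$ locally a one-dimensional ``whisker'' --- so the promised gluing argument would have to start further back than you indicate.

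The gap is closable, and in fact your own smallness conditions already contain the mechanism by which the paper avoids any corner analysis. Since $N\setminus{\rm int}\,Q\subseteq{\rm int}\,P_1\cup{\rm int}\,P_2$ with $P_1\cap P_2=\emptyset$, and $N\cap\partial Q\cap Q_j\cap P_i=\emptyset$ for $i\neq j$, every point $p\in M_1$ with $p\notin S$ has a whole neighbourhood in $N$ contained in $M_1$: if $p\notin Q$ this is clear; if $p\in\partial Q$ then $p\in Q_1\setminus Q_2$ (because $N\cap S\subseteq\dot S$ keeps $N$ off $\partial S$) and $p\in{\rm int}\,P_1$, so nearby points of $N$ lie either in $Q_1$, hence in $N\cap Q_1$, or outside ${\rm int}\,Q$ and inside ${\rm int}\,P_1$, hence in $A_1$; if $p\in{\rm int}\,Q\setminus S$ then nearby points of $N\cap Q$ lie in $Q_1$. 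Hence $M_1\setminus S=M_1\setminus M_2$ is open in the $3$--manifold $N$, the only nontrivial local model is along $N\cap S$, where the collar of $N\cap S$ in $N\cap Q_1$ provided by Proposition~\ref{prop:prep0} applies because $A_1$ is at positive distance from $N\cap S$, and these two pieces cover $M_1$; so $M_1$ is a $3$--manifold and no inspection of the edges or vertices of $Q$ is needed. This is exactly how the paper argues, with a tidier choice of splitting: it shrinks $U_1,U_2$ to polyhedral neighbourhoods with $U_1\cap U_2\subseteq{\rm int}\,Q$ and $U_i\cap\partial Q\subseteq Q_i$, takes $N$ inside $(U_1-{\rm int}\,Q_2)\cup(U_2-{\rm int}\,Q_1)$ with $N\cap S\subseteq U_1\cap U_2$, and sets $M_1:=(N\cap U_1)-{\rm int}\,Q_2$ and $M_2:=(N\cap U_2)-{\rm int}\,Q_1$; then $M_1-S=N-M_2$ is automatically open in $N$, polyhedrality is immediate since $M_i$ is an intersection of polyhedra (a point you would also still need to argue for $\overline{N\setminus Q}\cap P_i$), and the collar does the rest. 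Either adopt that definition or insert the openness argument above; until one of these is done, clause ({\it iv}\/), and with it the proposition, is not proved.
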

\begin{proof} Possibly after reducing $U_1$ and $U_2$ we may assume that they are polyhedral and so small that $U_1 \cap U_2 \subseteq {\rm int}\ Q$, $U_1 \cap \partial Q \subseteq Q_1$ and $U_2 \cap \partial Q \subseteq Q_2$.

Let $V := (U_1 - {\rm int}\ Q_2) \cup (U_2 - {\rm int}\ Q_1)$. We claim that $V$ is a neighbourhood of $K$. Pick a point $p \in K$, and assume first that $p \in K_1 - K_2$. Then $p \not\in Q_2$ and so it has an open neighbourhood $W$ contained in $U_1$ and disjoint from $Q_2$; hence $W \subseteq V$. The same holds true for $p \in K_2 - K_1$. Finally, let $p \in K_1 \cap K_2$. Then $p \in S$ and it has an open neighbourhood $W$ contained in $U_1 \cap U_2$. Since \[W = (W - {\rm int}\ Q_2) \cup (W - {\rm int}\ Q_1)\] it follows that \[W \subseteq (U_1 - {\rm int}\ Q_2) \cup (U_2 - {\rm int}\ Q_1) = V.\]

Let $N$ be a $pm$--neighbourhood of $K$ contained in $V$ and so small that $N \cap S \subseteq U_1 \cap U_2$. We can assume that $N$ satisfies conditions ({\it i}\/), ({\it ii}\/) and (if $r(K) < \infty$) ({\it iii}\/) of Proposition \ref{prop:prep0}. Let $M_1 := (N \cap U_1) - {\rm int}\ Q_2$ and $M_2 := (N \cap U_2) - {\rm int}\ Q_1$.
\smallskip

({\it i}\/) Since $K_1 \cap {\rm int}\ Q_2 = \emptyset$, we see that $K_1 \subseteq M_1$. Also $M_1 \subseteq U_1$ by construction. Similarly one proves that $K_2 \subseteq M_2 \subseteq U_2$.
\smallskip

({\it ii}\/) Clearly $M_1 \cap M_2 = (N \cap U_1 \cap U_2) - ({\rm int}\ Q_1 \cup {\rm int}\ Q_2)$. Notice that, since $U_1 \cap U_2 \subseteq {\rm int}\ Q$, it follows that $M_1 \cap M_2 \subseteq N \cap S$. To prove the reverse inclusion, recall that we assumed $N \cap S \subseteq U_1 \cap U_2$. Since $N \cap S$ is disjoint from ${\rm int}\ Q_1$ and ${\rm int}\ Q_2$, it follows that $N \cap S \subseteq M_1 \cap M_2$.
\smallskip

({\it iii}\/) Observe that $M_1 \cap Q = (N \cap U_1) \cap (Q_1 \cup \partial Q)$. The assumption that $U_1 \cap \partial Q \subseteq Q_1$ then implies that $M_1 \cap Q = (N \cap U_1) \cap Q_1 \subseteq Q_1$. Similarly one proves $M_2 \cap Q \subseteq Q_2$.
\smallskip

({\it iv}\/) $M_1$ and $M_2$ are polyhedra because they are the intersection of the polyhedron $N$ with the polyhedra $U_1 - {\rm int}\ Q_2$ and $U_2 - {\rm int}\ Q_1$. It remains to show that they are also $3$--manifolds. Observe that $M_1 \cap S = N \cap S$. Since $N \cap S$ is collared in $N \cap Q_1$, there is a neighbourhood $V_1$ of $N \cap S$ in $N \cap Q_1$ homeomorphic to $(N \cap S) \times (-1,0]$. Also $N \cap S$ is a $2$--manifold, and so it follows that $V_1$ is $3$--manifold (with boundary). $M_1 - S$ is a $3$--manifold because it coincides with $N - M_2$, which is open in the $3$--manifold $N$. As $M_1$ is covered by the interiors of $V_1$ and $M_1 - S$, we conclude that $M_1$ is a $3$--manifold. The same argument works for $M_2$.
\end{proof}

\fbox{Step 2} Suppose that $C_1, \ldots, C_r$ are the connected components of a compact $2$--manifold contained in $\mathbb{R}^2$. For $i \neq j$ we have $\partial \hat{C}_i \cap \partial \hat{C}_j \subseteq C_i \cap C_j = \emptyset$, so the disks $\hat{C}_i$ and $\hat{C}_j$ have to satisfy precisely one of the following three alternatives: either $\hat{C}_i \subseteq {\rm int}\ \hat{C}_j$, or $\hat{C}_j \subseteq {\rm int}\ \hat{C}_i$, or $\hat{C}_i \cap \hat{C}_j = \emptyset$. In the first case we shall say that $C_i$ is \emph{interior} to $C_j$, in the second one that $C_j$ is \emph{interior} to $C_i$ and in the third one that $C_i$ and $C_j$ are \emph{indifferent}. The following result is very easy to prove by induction:

\begin{lemma} \label{lem:orden} The connected components of a compact $2$--manifold contained in $\mathbb{R}^2$ may be labeled $C_1, \ldots, C_r$ in such a way that if $C_i$ is interior to $C_j$ then $i < j$.
\end{lemma}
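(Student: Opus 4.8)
The plan is to recognise the relation ``\emph{is interior to}'' as a strict partial order on the finite set of components $C_1,\ldots,C_r$ and then produce the required labelling as a linear extension of this order, by induction on $r$ exactly as the hint suggests.

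First I would check that ``interior to'' is indeed a strict partial order on the components. It is irreflexive, since $\hat{C}_i$ is never contained in ${\rm int}\ \hat{C}_i$ (the boundary of $\hat{C}_i$ is not in the interior); it is antisymmetric, by the trichotomy established just before the statement; and it is transitive, because if $\hat{C}_i \subseteq {\rm int}\ \hat{C}_j$ and $\hat{C}_j \subseteq {\rm int}\ \hat{C}_k$ then, using ${\rm int}\ \hat{C}_j \subseteq \hat{C}_j$, one gets $\hat{C}_i \subseteq {\rm int}\ \hat{C}_k$. Thus two components are incomparable for this order precisely when they are indifferent.

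For the induction on $r$: the case $r = 1$ is trivial. For the inductive step I would pick a component $C$ that is maximal for the order; such a component exists because a finite nonempty strict partial order always has a maximal element (equivalently, were every $\hat{C}_j$ interior to some other $\hat{C}_k$, one could build an infinite strictly increasing chain of disks among finitely many of them). By maximality, $C$ is not interior to any other component. Deleting $C$, the remaining $r-1$ components again form a compact $2$--manifold contained in $\mathbb{R}^2$ (each component of a manifold is both open and closed), so by the inductive hypothesis they can be labelled $C_1,\ldots,C_{r-1}$ in such a way that $C_i$ interior to $C_j$ forces $i < j$; I then set $C_r := C$. To finish I would verify this labelling: if $C_i$ is interior to $C_j$, then $i \neq r$ since $C_r$ is maximal, hence $i \leq r-1$; if $j = r$ then $i < r = j$ trivially, and if $j \leq r-1$ the inductive hypothesis gives $i < j$.

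I do not anticipate any real obstacle here: the only point that needs a moment's thought is the existence of a maximal component, and that is immediate from finiteness once ``interior to'' has been seen to be a partial order. Everything else is routine bookkeeping with the three alternatives recalled just before the statement.
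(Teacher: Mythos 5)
Your proof is correct, and it follows exactly the route the paper intends: the paper omits the argument, remarking only that the lemma ``is very easy to prove by induction,'' and your induction—verify that ``interior to'' is a strict partial order, peel off a maximal component and label it last—is the natural realisation of that hint. No gaps: the existence of a maximal component and the fact that removing a component does not change the relation among the others are both handled (or immediate), so nothing further is needed.
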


\fbox{Step 3} Now we are ready to define $N_1$ and $N_2$. Denote by $C_1, \ldots, C_r$ the components of $N \cap S$ and by $C^*_i$ their capping sets, as usual. By Lemma \ref{lem:orden} we can assume that the $C_i$ are ordered in such a way that if $C_i$ is interior to $C_j$ then $i < j$. Fix $0 < \varepsilon < 1$, which later on we will need to adjust.

{\it First stage.} Let \[\hat{M}_1 := M_1 \cup \bigcup_{i=1}^r \left(C_i \times \left[0,\varepsilon \frac{i}{r}\right]\right).\] As explained earlier, $\hat{M}_1$ results from the extrusion of the $C_i$ along the $z$ axis. The innermost components of $N \cap S$ (those with smaller $i$) are extended only a little, whereas the outermost components are extended farther.

\begin{remark} \label{rem:prep4} There exist homeomorphisms $h_1 : M_1 \longrightarrow \hat{M}_1$ and $h_2 : M_2 \longrightarrow \hat{M}_2$ such that $h_1(p,0) = \left(p,\frac{i}{r}\right)$ for $p \in C_i$ and similarly $h_2(p,0) = \left(p,-\frac{i}{r}\right)$ for $p \in C_i$.
\end{remark}

Remark \ref{rem:prep4} is a fairly standard exercise in using the collar of $N \cap S$ in $M_1$ and $M_2$, which we omit. The interested reader can find the detailed argument for a similar situation in a paper by Sikkema \cite[Theorem 2, p. 400]{sikkema1}.
\smallskip

{\it Second stage.} Let \[P_1 := \bigcup_{i=1}^r \left(C^*_i \times \left[\varepsilon\left(\frac{i}{r} - \frac{1}{2r}\right),\varepsilon \frac{i}{r}\right]\right)\] and, finally, define \[N_1 := \hat{M}_1 \cup P_1.\]

As before, we call $C_i \times \left[0,\varepsilon \frac{i}{r}\right]$ the \emph{extruded} $C_i$ and $C^*_i \times \left[\varepsilon\left(\frac{i}{r} - \frac{1}{2r}\right),\varepsilon \frac{i}{r}\right]$ the \emph{thickened} $C^*_i$.

\begin{remark} \label{rem:prep2} The thickened $C^*_i$ meet $\hat{M}_1$ in a disjoint union of annuli, precisely one for each of the thickened disks in $C^*_i$.
\end{remark}

Remark \ref{rem:prep2} should be geometrically clear, but nevertheless we prove it formally. Obviously the thickened $C^*_i$ are disjoint from $M_1$, so it is enough to study their intersection with the extruded $C^*_j$. Assume that \[\emptyset \neq \left( C^*_i \times \left[\varepsilon\left( \frac{i}{r} - \frac{1}{2r}\right),\varepsilon\frac{i}{r} \right]\right) \bigcap \left(C_j \times \left[0,\varepsilon \frac{j}{r}\right]\right)\] or, equivalently, \[\emptyset \neq (C^*_i \cap C_j) \times \left( \left[\varepsilon\left( \frac{i}{r}-\frac{1}{2r}\right),\varepsilon \frac{i}{r} \right] \bigcap \left[0,\varepsilon \frac{j}{r} \right]\right).\]

Each of the factors of the product has to be nonempty; in particular $C^*_i \cap C_j \neq \emptyset$. Suppose $i \neq j$. Then $\partial C^*_i \cap \partial C_j \subseteq \partial C_i \cap \partial C_j = \emptyset$ and, since $C_j$ is connected, it must be wholly contained in one the disks whose union is $C^*_i$. As $\hat{C}_j$ is the smallest disk that contains $C_j$, we conclude that $\hat{C}_j \subseteq C^*_i \subseteq {\rm int}\ \hat{C}_i$, so that $C_j$ is interior to $C_i$ and thus $j < i$ because of the choice of the ordering $C_1, \ldots, C_r$. Then $j \leq i-1$ which implies that the intervals $\left[\varepsilon\left( \frac{i}{r}-\frac{1}{2r}\right),\varepsilon\frac{i}{r} \right]$ and $\left[0,\varepsilon\frac{j}{r} \right]$ are disjoint. Hence the above intersection is empty, a contradiction, and we conclude that if the thickened $C^*_i$ meets the extruded $C_j$, then $i = j$. In that case their intersection is \[\left(C^*_i \times \left[\varepsilon\left( \frac{i}{r} - \frac{1}{2r}\right),\varepsilon\frac{i}{r} \right]\right) \bigcap \left(C_i \times \left[0,\varepsilon \frac{i}{r}\right]\right) = \partial C^*_i \times \left[\varepsilon\left( \frac{i}{r}-\frac{1}{2r}\right),\varepsilon \frac{i}{r}\right],\] which is just a disjoint union of annuli, one for each component of $C^*_i$. The claim of Remark \ref{rem:prep2} follows.
\smallskip

An analogous process has to be performed on $M_2$. Thus we let \[\hat{M}_2 := M_2 \cup \bigcup_{i=1}^r \left(C_i \times \left[ -\varepsilon \frac{i}{r},0 \right]\right),\] \[P_2 := \bigcup_{i=1}^r \left(C_i^* \times \left[ -\varepsilon \frac{i}{r},-\varepsilon \left(\frac{i}{r} + \frac{1}{2r}\right) \right]\right),\] and \[N_2 := \hat{M}_2 \cup P_2.\]

\subsection{The proof of Theorem \ref{teo:aux}} To finish this section we put all the pieces together. First we show that $N_1$ and $N_2$ satisfy property ($S_1$) of Theorem \ref{teo:aux}.

\begin{proposition} \label{P1} $N_1$ and $N_2$ have property ($S_1$) \[{\rm rk}\ H_1(N_1) + {\rm rk}\ H_1(N_2) \leq {\rm rk}\ H_1(N).\]
\end{proposition}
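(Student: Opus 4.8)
The plan is to run the Mayer--Vietoris sequence twice. First, applied to the decompositions $N_1=\hat M_1\cup P_1$ and $N_2=\hat M_2\cup P_2$, it expresses ${\rm rk}\ H_1(N_1)$ and ${\rm rk}\ H_1(N_2)$ in terms of $M_1$, $M_2$ and the slice $N\cap S$; second, applied to $N=M_1\cup M_2$ (where $M_1\cap M_2=N\cap S$ by Proposition~\ref{prop:prep1}(ii)), it does the same for ${\rm rk}\ H_1(N)$. The two computations then combine almost formally into $(S_1)$. Throughout, all homology is taken with $\mathbb{Z}_2$ coefficients, so ${\rm rk}$ is dimension over $\mathbb{Z}_2$ and every exact sequence below is one of finite-dimensional vector spaces.

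Consider $N_1=\hat M_1\cup P_1$. By its very definition $P_1$ is a disjoint union of $3$--cells, namely the thickened disks of the capping sets $C_i^*$, which occupy pairwise disjoint ranges of the $z$--coordinate; hence $H_1(P_1)=0$. By Remark~\ref{rem:prep2} the intersection $\hat M_1\cap P_1$ is a disjoint union of annuli, exactly one contained in each $3$--cell of $P_1$, so the homomorphism $H_0(\hat M_1\cap P_1)\to H_0(P_1)$ is a bijection and the connecting homomorphism $H_1(N_1)\to H_0(\hat M_1\cap P_1)$ of the Mayer--Vietoris sequence of $(\hat M_1,P_1)$ vanishes. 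Therefore $H_1(N_1)\cong H_1(\hat M_1)/{\rm im}\bigl(H_1(\hat M_1\cap P_1)\to H_1(\hat M_1)\bigr)$. Collapsing each extruded $C_i$ back onto $C_i$ defines a deformation retraction $\hat M_1\searrow M_1$ that sends each annulus over a hole of $C_i$ to the corresponding inner boundary circle of $C_i$; since $C_i$ is a disk with holes, these inner boundary circles form a basis of $H_1(C_i)$, so, ranging over all $i$, the classes of the annuli span exactly $\iota_{1*}H_1(N\cap S)$, where $\iota_1\colon N\cap S\hookrightarrow M_1$ is the inclusion. Consequently ${\rm rk}\ H_1(N_1)={\rm rk}\ H_1(M_1)-\dim\iota_{1*}H_1(N\cap S)$, and symmetrically ${\rm rk}\ H_1(N_2)={\rm rk}\ H_1(M_2)-\dim\iota_{2*}H_1(N\cap S)$ with $\iota_2\colon N\cap S\hookrightarrow M_2$.

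Now write down the Mayer--Vietoris sequence of $N=M_1\cup M_2$ and let $\alpha\colon H_1(N\cap S)\to H_1(M_1)\oplus H_1(M_2)$, $\alpha(x)=(\iota_{1*}x,\iota_{2*}x)$, and $\gamma\colon H_0(N\cap S)\to H_0(M_1)\oplus H_0(M_2)$ be the corresponding homomorphisms. Exactness gives
\[
{\rm rk}\ H_1(N)={\rm rk}\ H_1(M_1)+{\rm rk}\ H_1(M_2)-\dim{\rm im}\,\alpha+\dim\ker\gamma .
\]
Because ${\rm im}\,\alpha\subseteq\iota_{1*}H_1(N\cap S)\oplus\iota_{2*}H_1(N\cap S)$ we have $\dim{\rm im}\,\alpha\le\dim\iota_{1*}H_1(N\cap S)+\dim\iota_{2*}H_1(N\cap S)$; feeding this together with $\dim\ker\gamma\ge0$ into the two formulas of the previous paragraph yields
\[
{\rm rk}\ H_1(N_1)+{\rm rk}\ H_1(N_2)\le{\rm rk}\ H_1(M_1)+{\rm rk}\ H_1(M_2)-\dim{\rm im}\,\alpha\le{\rm rk}\ H_1(N),
\]
which is property $(S_1)$.

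I expect the delicate point to be the identification carried out in the second paragraph: one must be sure that $P_1$ really meets $\hat M_1$ in a disjoint family of annuli, one per $3$--cell of $P_1$ (this is exactly what the staggered extrusion heights, via Remark~\ref{rem:prep2}, are designed to secure, and without it neither the vanishing of the connecting homomorphism nor the computation of ${\rm im}\bigl(H_1(\hat M_1\cap P_1)\to H_1(\hat M_1)\bigr)$ survives), and that under the retraction $\hat M_1\searrow M_1$ those annuli map onto a generating set of $\iota_{1*}H_1(N\cap S)$ --- neither a proper subspace nor something larger. Once these geometric facts are in hand, everything else is routine manipulation of Mayer--Vietoris sequences of vector spaces.
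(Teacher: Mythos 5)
Your proof is correct, but it takes a genuinely different route from the paper's. The paper never computes $H_1(N_1)$ and $H_1(N_2)$ individually: it glues $N_1$ and $N_2$ abstractly along the capped disks $\hat{C}_i$ to form a manifold $N'$, runs Mayer--Vietoris once on $N' = N_1 \cup N_2$ (the intersection is a disjoint union of disks, giving ${\rm rk}\ H_1(N_1) + {\rm rk}\ H_1(N_2) \leq {\rm rk}\ H_1(N')$) and once on $N' = \hat{M} \cup P$ (where Remark~\ref{rem:prep2} yields injectivity of $H_0(\hat{M}\cap P) \to H_0(P)$ and hence ${\rm rk}\ H_1(N') \leq {\rm rk}\ H_1(\hat{M})$), and closes with the homeomorphism $N \cong \hat{M}$ of Remark~\ref{rem:prep4}. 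You stay inside $\mathbb{R}^3$: you compute ${\rm rk}\ H_1(N_i) = {\rm rk}\ H_1(M_i) - \dim \iota_{i*}H_1(N\cap S)$ from the Mayer--Vietoris sequence of $N_i = \hat{M}_i \cup P_i$, using Remark~\ref{rem:prep2}, the vertical collapse $\hat{M}_i \searrow M_i$, and the standard fact that the inner boundary circles of a planar disk with holes form a basis of its $H_1$, and then compare with the Mayer--Vietoris sequence of $N = M_1 \cup M_2$ supplied by Proposition~\ref{prop:prep1}. What your route buys: it avoids the abstract doubling and Remark~\ref{rem:prep4} entirely (a deformation retraction does the work of the collar-built homeomorphism $h_1 \cup h_2$), and it makes the potential slack in $(S_1)$ explicit, namely $\dim\ker\gamma$ plus the gap between $\dim{\rm im}\,\alpha$ and $\dim\iota_{1*}H_1(N\cap S)+\dim\iota_{2*}H_1(N\cap S)$. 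What the paper's route buys: it is softer, needing only injectivity/surjectivity statements from its two sequences and never an exact identification of the image of $H_1(\hat{M}_1\cap P_1)$ in $H_1(\hat{M}_1)$, which is the extra step you rightly flag as delicate (though note that for the inequality only the inclusion $\iota_{1*}H_1(N\cap S)\subseteq {\rm im}\bigl(H_1(\hat{M}_1\cap P_1)\to H_1(\hat{M}_1)\bigr)$ is really needed; a larger image would only help). Both arguments rest on the same geometric input, Remark~\ref{rem:prep2}.
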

\begin{proof} Consider the compact $3$--manifold $N'$ obtained from the disjoint union $N_1 \coprod N_2$ by identifying each $\left(p,\frac{i}{r}\right) \in \hat{C}_i \times \left\{\frac{i}{r}\right\} \subseteq N_1$ with its corresponding $\left(p,-\frac{i}{r}\right) \in \hat{C}_i \times \left\{-\frac{i}{r}\right\} \subseteq N_2$ via an equivalence relation $\sim$. This process cannot generally be performed in $\mathbb{R}^3$, but of course it can be done in an abstract way.

Identifying $N_1$, $N_2$ and all of their subsets with their images in $N'$ we may write $N' = N_1 \cup N_2$, where $N_1 \cap N_2$ is a disjoint union of disks. In particular $H_1(N_1 \cap N_2) = 0$ and then from the Mayer--Vietoris exact sequence \[H_1(N_1 \cap N_2) \longrightarrow H_1(N_1) \oplus H_1(N_2) \longrightarrow H_1(N')\] we see that the arrow connecting the last two terms in the sequence is injective, so ${\rm rk}\ H_1(N_1) + {\rm rk}\ H_1(N_2) \leq {\rm rk}\ H_1(N')$.

Recall that $N_1 = \hat{M}_1 \cup P_1$ and $N_2 = \hat{M}_2 \cup P_2$. Let us denote by $\hat{M}$ and $P$ the result of performing on $\hat{M}_1 \coprod \hat{M}_2$ and $P_1 \coprod P_2$, respectively, the identifications prescribed by $\sim$. Clearly $N' = \hat{M} \cup P$.

Each component of $P$ is a $3$--ball, so $H_1(P) = 0$. It follows from Remark \ref{rem:prep2} that each component of $\hat{M} \cap P$ is an annulus that is contained in one of the components of $P$, and no component of $P$ contains more than one of these annuli. Hence the inclusion induced map $H_0(\hat{M} \cap P) \longrightarrow H_0(P)$ is injective. Thus in the Mayer--Vietoris exact sequence \[H_1(\hat{M}) \oplus H_1(P) = H_1(\hat{M}) \longrightarrow H_1(N') \longrightarrow H_0(\hat{M} \cap P) \longrightarrow H_0(\hat{M}) \oplus H_0(P)\] we see that the rightmost arrow is injective, so the leftmost one is surjective and consequently ${\rm rk}\ H_1(N') \leq {\rm rk}\ H_1(\hat{M})$.

The two homeomorphisms $h_1$ and $h_2$ mentioned in Remark \ref{rem:prep4} can be pasted together to produce a homeomorphism $h : N \longrightarrow \hat{M}$. In particular, ${\rm rk}\ H_1(N) = {\rm rk}\ H_1(\hat{M})$. Together with the two other inequalities obtained earlier, we have that \[{\rm rk}\ H_1(N_1) + {\rm rk}\ H_1(N_2) \leq {\rm rk}\ H_1(N') \leq {\rm rk}\ H_1(\hat{M}) = {\rm rk}\ H_1(N),\] as was to be proved.
\end{proof}

There is a hypothesis for the subadditivity theorem that we have not used yet, namely that $\check{H}^1(K_1 \cap K_2) = 0$. It is only now, to show that $N_1$ and $N_2$ satisfy the smallness condition ($S_2$) of Theorem \ref{teo:aux}, that this hypothesis comes into play. We want to restate it in a more convenient fashion that follows immediately from Alexander duality in $\mathbb{S}^2$.

\begin{remark} $K_1 \cap K_2$ does not separate the square $S$.
\end{remark}

\begin{lemma} \label{lem:disks} Let $L$ be a nonseparating compact subset of $\mathbb{R}^2$. Then $L$ has arbitrarily small neighbourhoods that are finite unions of disjoint compact disks.
\end{lemma}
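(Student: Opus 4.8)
The plan is to reduce the statement, via Alexander duality, to the vanishing of $\check{H}^1(L)$, and then to run a ``hole-filling'' argument on a sufficiently thin polyhedral neighbourhood of $L$, in the same spirit as the proof of Theorem~\ref{teo:null}. First I would note that, since $L$ is a compact subset of $\mathbb{R}^2$ whose complement is connected, $\mathbb{S}^2 - L = (\mathbb{R}^2 - L) \cup \{\infty\}$ is connected as well (the point $\infty$ is a limit point of the unbounded part of $\mathbb{R}^2 - L$), so Alexander duality in $\mathbb{S}^2$ gives $\check{H}^1(L) = \tilde{H}_0(\mathbb{S}^2 - L) = 0$ (all with $\mathbb{Z}_2$ coefficients, as always in the paper).

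Next I would fix an arbitrary neighbourhood $U$ of $L$ and produce inside it a neighbourhood of $L$ that is a finite disjoint union of disks. Choose a decreasing $p$-neighbourhood basis $\{N_k\}$ of $L$ whose members are compact polyhedral $2$-manifolds (this exists by the planar analogue of Lemma~\ref{lem:easy}: take regular neighbourhoods of polyhedral approximations of $L$), and let $k$ be an index with $N_k \subseteq U$. By continuity of \v{C}ech cohomology the direct limit of the inclusion-induced sequence $H^1(N_1) \to H^1(N_2) \to \cdots$ equals $\check{H}^1(L) = 0$, and each $H^1(N_k)$ is finitely generated, so Lemma~\ref{lem:zero} furnishes an index $\ell \geq k$ for which the inclusion $N_\ell \subseteq N_k$ induces the zero homomorphism $H^1(N_k) \to H^1(N_\ell)$. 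Write $M := N_k$ and $M' := N_\ell$; dualising (we work over the field $\mathbb{Z}_2$), the inclusion $M' \subseteq M$ induces the zero homomorphism $H_1(M') \to H_1(M)$.

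Now each connected component $C$ of $M'$ is a disk with holes; let $\hat{C}$ be the disk it fills, as in the notation of Section~\ref{sec:sub}, so that $\partial \hat{C}$ is one of the polygonal boundary curves of $M'$, hence a loop in $M'$. Since $H_1(M') \to H_1(M)$ is zero, the class of $\partial \hat{C}$ vanishes in $H_1(M)$, and I claim this forces $\hat{C} \subseteq M$. Indeed, if some $p \in \operatorname{int}\hat{C}$ were not in $M$, then $M' \subseteq M \subseteq \mathbb{R}^2 - \{p\}$ and the composite $H_1(M') \to H_1(M) \to H_1(\mathbb{R}^2 - \{p\}) = \mathbb{Z}_2$ would send the class of $\partial\hat{C}$ to $0$ (since it already vanishes in $H_1(M)$); but that composite is just the mod-$2$ winding number of $\partial\hat{C}$ about $p$, which equals $1$ because $p$ lies inside the Jordan curve $\partial\hat{C}$ --- a contradiction. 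Hence $\operatorname{int}\hat{C} \subseteq M$, and since $M$ is closed, $\hat{C} \subseteq M$.

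Finally, set $N := \bigcup_C \hat{C}$, the union over all components $C$ of $M'$. Then $M' \subseteq N \subseteq M \subseteq U$, so $N$ is a neighbourhood of $L$ contained in $U$; and because the boundary curves $\partial\hat{C}$ are pairwise disjoint, the trichotomy for filled disks recalled in Section~\ref{sec:sub} (any two of the $\hat{C}$ are disjoint or nested) shows that $N$ equals the union of the maximal $\hat{C}$'s, which is a finite disjoint union of compact disks. Since $U$ was arbitrary, the lemma follows. The step I expect to be the main obstacle is the implication ``the class of $\partial\hat{C}$ is null-homologous mod $2$ in $M$'' $\Rightarrow$ ``$\hat{C} \subseteq M$'': it is geometrically evident but its rigorous justification rests on the winding-number computation above, and one has to be careful that the class dies precisely because the map $H_1(M') \to H_1(M)$ is zero. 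Everything else is routine bookkeeping with the planar disk-with-holes picture already in use in Section~\ref{sec:sub}.
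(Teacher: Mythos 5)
Your proof is correct, but it follows a genuinely different route from the paper. The paper disposes of the lemma in three lines by quoting the frame theorem from Moise's book: $L$ has arbitrarily small $m$--neighbourhoods $E$ such that distinct components of $\mathbb{R}^2 - E$ lie in distinct components of $\mathbb{R}^2 - L$; since $\mathbb{R}^2 - L$ is connected, so is $\mathbb{R}^2 - E$, and hence each component of $E$, a priori a disk with holes, has no holes at all. You instead prove a two--dimensional analogue of the hole-filling argument of Theorem \ref{teo:null}: Alexander duality in $\mathbb{S}^2$ converts the nonseparation hypothesis into $\check{H}^1(L) = 0$; continuity of \v{C}ech cohomology together with Lemma \ref{lem:zero} gives nested polyhedral $2$--manifold neighbourhoods $M' \subseteq M \subseteq U$ with $H_1(M') \to H_1(M)$ zero; and a winding-number computation (the planar stand-in for Lemma \ref{lem:bound}) shows that the filled disk $\hat{C}$ of each component of $M'$ stays inside $M$, after which the trichotomy of Section \ref{sec:sub} lets you keep only the maximal $\hat{C}$'s to get a finite disjoint union of disks inside $U$. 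The trade-off is clear: the paper's argument is much shorter but leans on an external citation, while yours is longer but self-contained, built entirely from tools already deployed elsewhere in the paper (Alexander duality, continuity, Lemma \ref{lem:zero}, the disk-with-holes picture), and it makes the parallel with the proof of the nullity property explicit. All the delicate points in your version check out: the dualisation from cohomology to homology is legitimate because the coefficients form a field, the point $p$ in the winding-number step does lie outside $M'$, and $\hat{C} \subseteq M$ follows from $\operatorname{int}\hat{C} \subseteq M$ since $M$ is closed; the planar analogues of Lemma \ref{lem:easy} and of the continuity statement that you invoke are indeed routine.
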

\begin{proof} By the frame theorem \cite[Theorem 6, p. 72]{moise2} $L$ has arbitrarily small $m$--neighbourhoods $E$ with the property that different components of $\mathbb{R}^2 - E$ lie in different components of $\mathbb{R}^2 - L$. Since $\mathbb{R}^2 - L$ is connected by hypothesis, $\mathbb{R}^2 - E$ is connected too. The components of $E$ are disks with holes but, since $E$ does not disconnect $\mathbb{R}^2$, it follows that they are actually disks.
\end{proof}

\begin{proposition} \label{P2} Property ($S_2$) holds: given $V_1$ and $V_2$ open neighbourhoods of $K_1$ and $K_2$, there exists $V$ open neighbourhood of $K$ such that if $N \subseteq V$ one can achieve, by choosing $\varepsilon > 0$ sufficiently small at Step 3, that $N_1 \subseteq V_1$ and $N_2 \subseteq V_2$.
\end{proposition}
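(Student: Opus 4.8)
The plan is to produce $V$ by shrinking the neighbourhoods $V_1$ and $V_2$ (and the earlier auxiliary neighbourhoods $U_1, U_2$) before running Step 1, and then to show that once $N \subseteq V$ is fixed the whole output of Step 3 can be squeezed into $V_1$ and $V_2$ simply by taking $\varepsilon$ small. First I would observe that $N_1 = \hat M_1 \cup P_1$, so it suffices to control $\hat M_1$ and $P_1$ separately; the same applies to $N_2$. The set $\hat M_1$ is obtained from $M_1$ by extruding each component $C_i$ of $N \cap S$ upward by at most $\varepsilon$, and $P_1$ lives inside the same vertical slab $\{0 \le z \le \varepsilon\}$ over the capping disks $\hat C_i$. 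Hence both $\hat M_1$ and $P_1$ are contained in $M_1 \cup \big((N \cap S)^{\wedge} \times [0,\varepsilon]\big)$, where $(N\cap S)^{\wedge} = \bigcup_i \hat C_i$ denotes the union of the smallest disks containing the components of $N \cap S$. So the whole point is to arrange that $M_1$, together with a thin vertical collar above $N \cap S$, stays inside $V_1$.

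The key geometric input is Lemma \ref{lem:disks}: since $\check H^1(K_1 \cap K_2) = 0$, the compactum $K_1 \cap K_2 \subseteq \dot S$ is nonseparating in the plane $S$, so it has arbitrarily small planar neighbourhoods $E$ that are finite disjoint unions of \emph{disks}. Fixing such an $E$ small enough that $E \times [-\delta,\delta] \subseteq V_1 \cap V_2$ for some $\delta > 0$ (possible since $K_1 \cap K_2$ is compact and contained in the open set $V_1 \cap V_2$), I would then run Proposition \ref{prop:prep1} with the neighbourhoods $U_1$ and $U_2$ shrunk so that, in addition to the conditions already imposed there, $U_1 \cap U_2 \subseteq E \times(-\delta,\delta)$. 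This forces $N \cap S \subseteq E$; since $E$ is a disjoint union of disks and each $C_i$ is a component of $N\cap S$ sitting inside one of these disks, the smallest disk $\hat C_i$ containing $C_i$ is also contained in that same disk of $E$, and therefore $(N\cap S)^{\wedge} \subseteq E$. Consequently, as soon as $\varepsilon < \delta$ we get $(N\cap S)^{\wedge}\times[0,\varepsilon] \subseteq E\times[0,\delta) \subseteq V_1$, which handles the "vertical collar" part of $N_1$.

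It remains to see that $M_1 \subseteq V_1$; but this is essentially built into Proposition \ref{prop:prep1}, whose conclusion ({\it i}\/) already gives $M_1 \subseteq U_1 \subseteq V_1$ once we took $U_1 \subseteq V_1$ at the outset. Putting the two pieces together, $N_1 = \hat M_1 \cup P_1 \subseteq M_1 \cup \big((N\cap S)^{\wedge}\times[0,\varepsilon]\big) \subseteq V_1$ for every sufficiently small $\varepsilon$, and the symmetric argument gives $N_2 \subseteq V_2$. The neighbourhood $V$ of $K$ is then the one furnished by Proposition \ref{prop:prep1} for this choice of shrunk $U_1, U_2$. The only point requiring genuine care — and the step I expect to be the main obstacle — is the bookkeeping that guarantees the shrinking of $U_1, U_2$ can be carried out \emph{simultaneously} with all the technical constraints already demanded in the proof of Proposition \ref{prop:prep1} (polyhedrality of $U_1,U_2$, the containments $U_1\cap U_2\subseteq{\rm int}\,Q$, $U_i\cap\partial Q\subseteq Q_i$, and now also $U_1\cap U_2\subseteq E\times(-\delta,\delta)$), while still leaving $V = (U_1 - {\rm int}\,Q_2)\cup(U_2 - {\rm int}\,Q_1)$ a genuine neighbourhood of $K$; verifying that these conditions are mutually compatible, using that $E\times(-\delta,\delta)$ is itself an open neighbourhood of $K_1\cap K_2$ in $\mathbb{R}^3$, is routine but must be done with some attention.
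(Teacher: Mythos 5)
Your proposal is correct and follows essentially the same route as the paper: both arguments rest on Lemma \ref{lem:disks} (via $\check{H}^1(K_1\cap K_2)=0$) to obtain a neighbourhood $E$ of $K_1\cap K_2$ in $S$ that is a finite disjoint union of disks, on the observation that each capping disk $\hat{C}_i$ lies in the disk of $E$ containing $C_i$, and on confining the extruded and capped pieces to a thin slab $E\times[-\varepsilon,\varepsilon]\subseteq V_1\cap V_2$ while $M_1\subseteq V_1$ and $M_2\subseteq V_2$ come from the choice of $U_1,U_2$. The only difference is the order of the choices --- you fix $E$ and $\delta$ before choosing $V$ and force $N\cap S\subseteq E$ through the condition $U_1\cap U_2\subseteq E\times(-\delta,\delta)$, whereas the paper picks $E$ inside $N\cap S$ after $N$ is given --- which is a harmless (indeed slightly tidier) rearrangement of the same argument.
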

\begin{proof} This will follow rather easily once we establish the following
\smallskip

{\it Claim.} If $W$ is a neighbourhood of $N \cap S$, an appropriate choice of $\varepsilon > 0$ in Step 3 yields \[N_1 \subseteq M_1 \cup W \ \ \text{ and } \ \ N_2 \subseteq M_2 \cup W.\]

{\it Proof.} By Lemma \ref{lem:disks} there is a finite union of disjoint compact disks $E$ that is a neighbourhood of $K \cap S$ and is contained in $N \cap S$, and consequently also in $W$. Choose $0 < \varepsilon < 1$ such that $E \times [-\varepsilon,\varepsilon] \subseteq W$.

Pick any $C_i$. Since $\hat{C}_i$ contains both $C_i$ and $C^*_i$ (actually, it is the union of both sets), we have \[\bigcup_{i=1}^r \left( C_i \times \left[0,\varepsilon \frac{i}{r}\right] \right) \subseteq \hat{C}_i \times [0,\varepsilon]\] and \[P_1 = \bigcup_{i=1}^r \left(C^*_i \times \left[\varepsilon \left(\frac{i}{r} - \frac{1}{2r}\right),\varepsilon \frac{i}{r}\right]\right) \subseteq \hat{C}_i \times [0,\varepsilon].\] Consequently $N_1 \subseteq M_1 \cup (\hat{C}_i \times [0,\varepsilon])$. 
Now, $C_i$ is a connected subset of $E$, which is a union of disks; hence $C_i$ is contained in a disk $D \subseteq E$. Since $\hat{C}_i$ is the smallest disk that contains $C_i$, it follows that $\hat{C}_i \subseteq D \subseteq E$. Hence \[N_1 \subseteq M_1 \cup (\hat{C}_i \times [0,\varepsilon]) \subseteq M_1 \cup (E \times [0,\varepsilon]) \subseteq M_1 \cup W.\]

An analogous argument works for $N_2$.
\smallskip

We can now finish the proof of the proposition. Clearly there exists an open neighbourhood $V$ of $K$ such that if $N \subseteq V$ then $M_1 \subseteq V_1$ and $M_2 \subseteq V_2$. Letting $W := V_1 \cap V_2$, which is a neighbourhood of $M_1 \cap M_2 = N \cap S$, and applying the claim above there exists $\varepsilon > 0$ such that $N_1 \subseteq M_1 \cup W$ and $N_2 \subseteq M_2 \cup W$. Thus \[N_1 \subseteq M_1 \cup W \subseteq V_1 \cup W = V_1 \ \ \text{and} \ \ N_2 \subseteq M_2 \cup W \subseteq V_2 \cup W = V_2.\]
\end{proof}

\section{Balls and spheres that cannot be attractors} \label{sec:esferas}

The classical \emph{Jordan curve theorem} states that a simple closed curve $\gamma$ in the sphere $\mathbb{S}^2$ separates it in exactly two connected components $U_1$ and $U_2$, called the \emph{complementary domains} of $\gamma$, whose topological boundaries coincide with $\gamma$. This turns out to be much more general: a connected, compact $n$--manifold $M \subseteq \mathbb{S}^{n+1}$ separates $\mathbb{S}^{n+1}$ in exactly two complementary domains whose topological boun{\-}da{\-}ries coincide with $M$. The proof is homological in nature and depends on Alexander duality (see the argument before Proposition \ref{prop:surface} and the proof of Lemma \ref{lem:borde}).
\smallskip

The \emph{Sch\"onflies theorem} improves on the Jordan curve theorem by being more precise about the nature of $\overline{U}_1$ and $\overline{U}_2$. Namely, in two dimensions it states that the closures of the two complementary domains of a simple closed curve $\gamma \subseteq \mathbb{S}^2$ are disks. Alexander tried to generalise this result to higher dimensions and proved it for polyhedral spheres $S$ in $\mathbb{S}^3$ (we have used this result in proving Lemma \ref{lem:perfora} earlier). However, he also discovered that there exist non polyhedral spheres for which the Sch\"onflies theorem is not true. In a beautiful paper \cite{alexander1} he described an embedding $S^*$ of the sphere in $\mathbb{S}^3$ (the \emph{horned sphere of Alexander}) such that one of its complementary domains is not simply connected, so its closure cannot be homeomorphic to a ball. This shows that the Sch\"onflies theorem is false, in general, in $\mathbb{S}^3$. The closure $B^*$ of the other complementary domain \emph{is} homeomorphic to a ball, which we call the \emph{Alexander ball} (see Figure \ref{fig:bola_alexander}). In this section we shall prove that neither $S^*$ nor $B^*$ can be attractors.

\subsection{The ball of Alexander cannot be an attractor} We briefly recall how $B^*$ is defined. Start with a graph $\Gamma$ as shown in Figure \ref{fig:bola_alexander_construye}.{\it a}. It is a dyadic tree that keeps branching towards its set of limit points, which is a Cantor set $C$. Consider now this very same tree, but embedded differently in $\mathbb{R}^3$, like $\Gamma^*$ in Figure \ref{fig:bola_alexander_construye}.{\it b}. At each branching stage every pair of innermost branches get tangled. 

\begin{figure}[h!]
\begin{pspicture}(0,-0.7)(12.2,4)
	\rput[bl](0,0){\includegraphics{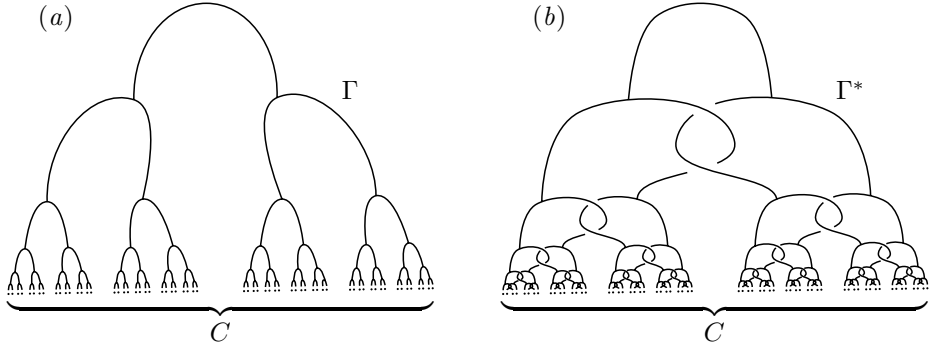}}
	\rput[bl](0.4,3.5){({\it a}\/)} \rput[bl](6.9,3.5){({\it b}\/)}
	\rput[bl](4.4,2.6){$\Gamma$} \rput[bl](10.9,2.6){$\Gamma^*$}
	\rput[bl](0,-0.3){$\underbrace{\hspace{5.6cm}}$} \rput[b](2.8,-0.6){$C$}
	\rput[bl](6.5,-0.3){$\underbrace{\hspace{5.6cm}}$} \rput[b](9.3,-0.6){$C$}
\end{pspicture}
\caption{The construction of Alexander's ball $B^* \subseteq \mathbb{R}^3$ \label{fig:bola_alexander_construye} (first stages)}
\end{figure}

Finally, let $B^*$ be a sort of tubular neighbourhood of $\Gamma^*$ whose diameter keeps decreasing as we approach the limit points of $\Gamma^*$, that is, the Cantor set $C$. The diameter of $B^*$ at those points is zero; $B^*$ has pointy tips at $C$. A schematic picture of $B^*$ is shown in Figure \ref{fig:bola_alexander}. One can prove that $B^*$ is homeomorphic to a ball so that in particular $\partial B^*$ is homeomorphic to a sphere, but $\mathbb{R}^3 - B^*$ is not simply connected because it is homeomorphic to $\mathbb{R}^3 - \Gamma^*$, which is not simply connected \cite[\S 1, pp. 619 and 620]{blankinshipfox1}. In particular, $B^*$ is not cellular.

\begin{figure}
\begin{center}
	\includegraphics{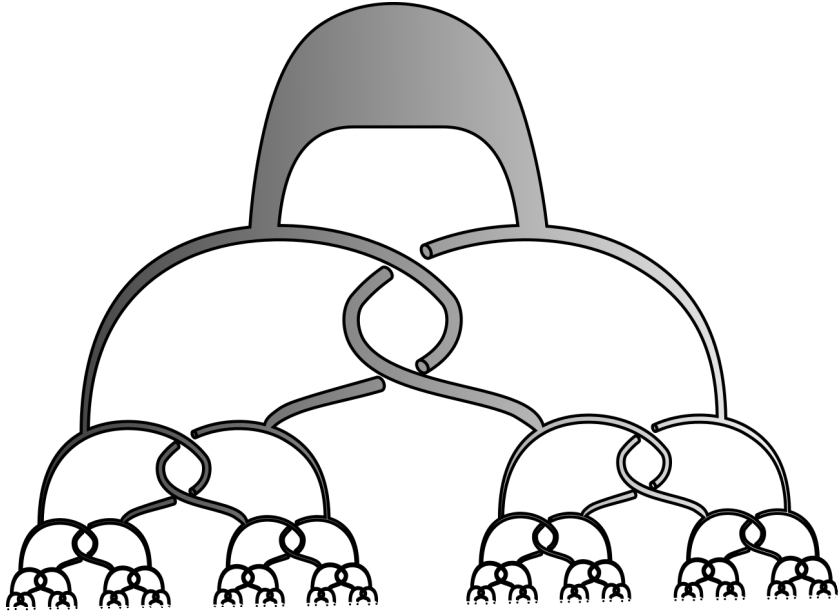}
\end{center}
\caption{The ball of Alexander $B^*$ \label{fig:bola_alexander} (final stage)}
\end{figure}

\begin{theorem} \label{teo:ball} The ball of Alexander $B^*$ cannot be an attractor.
\end{theorem}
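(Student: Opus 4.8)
The plan is to show that $r(B^*) = \infty$ and then invoke the finiteness property (Theorem \ref{teo:finite}) to conclude that $B^*$ cannot be an attractor. Two preliminary remarks set the stage. First, $B^*$ is a ball, hence a contractible continuum, so $\check{H}^2(B^*) = 0$; and $B^*$ is not cellular, as recorded after its construction. Therefore the nullity property (Theorem \ref{teo:null}) already gives $r(B^*) \geq 1$. The real work is to bootstrap this to $r(B^*) = \infty$, using subadditivity together with the \emph{self-similarity} of $B^*$: the Alexander ball contains scaled copies of itself hanging off each of its two clasped first-level branches.

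The key geometric step is to exhibit a tame decomposition $B^* = K_1 \cup K_2$ in which each piece is again (ambient homeomorphic to) the Alexander ball. Just before the first branching, the ``trunk'' of $B^*$ is a tamely embedded tube; taking $D$ to be a cross-sectional disk of this tube splits $B^*$ into $K_1$ and $K_2$ with $K_1 \cap K_2 = D$, where $K_1$ (resp.\ $K_2$) consists of one half of the trunk together with the entire sub-tree hanging off one (resp.\ the other) of the two clasped first-level branches. Each $K_i$ is a $3$--ball --- a tame tube-stub glued along a disk to a scaled copy of the Alexander ball --- and is ambient homeomorphic to $B^*$, since adjoining a tame collar to a scaled copy of $B^*$ can be undone by a homeomorphism of $\mathbb{R}^3$ (alternatively one may arrange that $K_2$ is a \emph{literal} scaled and translated copy of $B^*$, ambient homeomorphic to $B^*$ by a similarity, at the cost of enlarging $K_1$ slightly). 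This decomposition is tame in the sense of Definition \ref{def:tame}: because $D$ is a cross-section of a tamely embedded tube, a neighbourhood of $D$ in $B^*$ looks exactly like the straight-tube situation of Example \ref{ejem:tame}, so after an ambient homeomorphism conditions ({\it i}\/) and ({\it ii}\/) of Definition \ref{def:tame} are met. Moreover $K_1 \cap K_2 = D$ is a disk, whence $\check{H}^1(K_1 \cap K_2) = 0$, and the subadditivity theorem applies.

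Granting this, the remainder is formal. By invariance (Theorem \ref{teo:inv}) we have $r(K_1) = r(K_2) = r(B^*)$, and subadditivity (Theorem \ref{teo:sub}) yields $r(B^*) \geq r(K_1) + r(K_2) = 2\,r(B^*)$. Were $r(B^*)$ finite, this would force $r(B^*) \leq 0$, i.e.\ $r(B^*) = 0$, contradicting the bound $r(B^*) \geq 1$ obtained from non-cellularity via the nullity property. Hence $r(B^*) = \infty$, and by finiteness $B^*$ cannot be an attractor. The main obstacle is the geometric groundwork of the middle paragraph: one must make the self-similar structure of $B^*$ precise enough to locate the cutting disk $D$ in a tame part of the trunk, check that the resulting splitting satisfies Definition \ref{def:tame}, and verify that absorbing a trunk-stub into a scaled copy of $B^*$ is realised by an ambient homeomorphism of $\mathbb{R}^3$ --- a routine but careful piecewise-linear/ambient-isotopy argument that leans on the fact that everything wild about $B^*$ is concentrated at the Cantor set of its tips, the trunk being perfectly tame.
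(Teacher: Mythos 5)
Your proposal is correct and follows essentially the same route as the paper: cut $B^*$ along a disk into two pieces, each ambient homeomorphic to $B^*$, then combine invariance, subadditivity (the cut being tame with disk intersection, so $\check{H}^1 = 0$), and nullity via non-cellularity to get $2r(B^*) \leq r(B^*)$ with $r(B^*) \neq 0$, hence $r(B^*) = \infty$ and the finiteness property concludes. The paper delegates the geometric verification of tameness and of the ambient homeomorphisms to its Figures \ref{fig:dec_B} and \ref{fig:halves_homeo}, which is exactly the groundwork you spell out in your middle paragraph.
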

\begin{proof} It is enough to show that $r(B^*) = \infty$. Begin by writing $B^*$ as the union $B^* = B^*_1 \cup B^*_2$ as shown in Figure \ref{fig:dec_B} below. This decomposition is clearly tame. Figure \ref{fig:halves_homeo} suggests how to prove that both $B^*_1$ and $B^*_2$ are ambient homeomorphic to $B^*$.

\begin{figure}
\begin{center}
\begin{pspicture}(0,0)(11,8.2)
	\rput[bl](0,0){\includegraphics{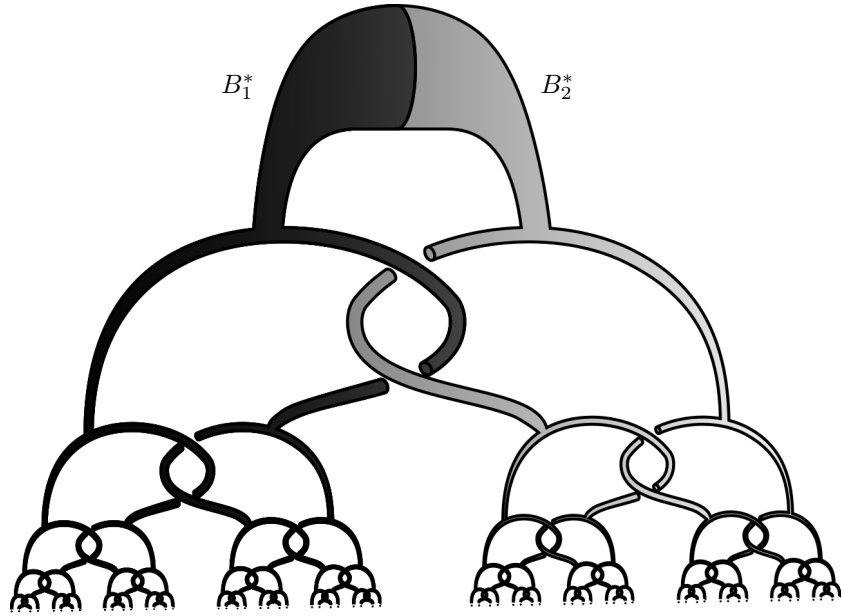}}
	\rput(3,7){$B^*_1$} \rput(7.2,7){$B^*_2$}
\end{pspicture}
\end{center}
\caption{A tame decomposition of $B^*$ \label{fig:dec_B}}
\end{figure}

\begin{figure}
\begin{center}
	\includegraphics{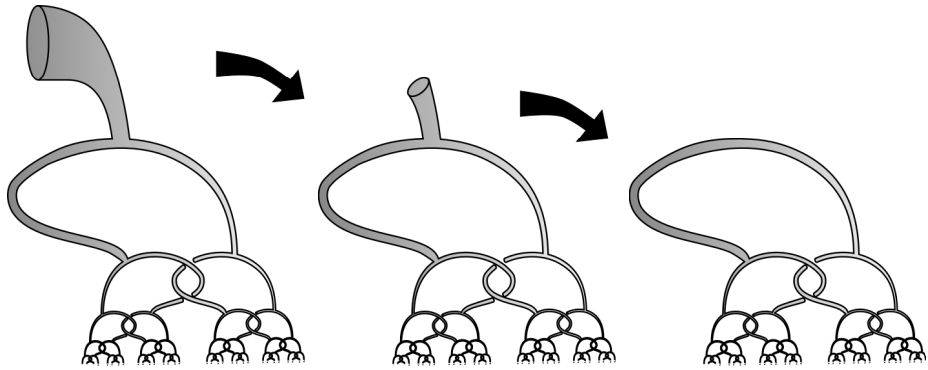}
\end{center}
\caption{Showing that $B^*_2$ is ambient homeomorphic to $B^*$ \label{fig:halves_homeo}}
\end{figure}

We have $r(B^*) = r(B^*_1) = r(B^*_2)$ by invariance, and also $r(B^*_1) + r(B^*_2) \leq r(B^*)$ by subadditivity. Therefore \[2r(B^*) \leq r(B^*),\] which implies that either $r(B^*) = 0$ or $r(B^*) = \infty$. Since $B^*$ is not cellular, by nullity we have $r(B^*) \neq 0$. Hence $r(B^*) = \infty$ and we are finished.
\end{proof}

Theorem \ref{teo:ball} illustrates again how a perfectly acceptable set, such as a ball, can be embedded in $\mathbb{R}^3$ in such a way that it cannot be an attractor.
\medskip

We stated earlier, while dealing with the subadditivity property, that it does not generally hold for decompositions that are not tame. Now we are in a position to prove this by example.

\begin{example} \label{exam:not_tame2} There is a decomposition of the standard unit ball $K$ into two continua $K_1$ and $K_2$ that meet in a disk and such that $r(K_1) + r(K_2) \not\leq r(K)$. Thus the subadditivity property does not hold for this decomposition.
\end{example}
\begin{proof} Start with the unit ball $K$. To define $K_1$, refer to Figure \ref{fig:not_tame} below and dig a hole in $K$ following the pattern of the ball of Alexander. $K_2$ is the part of $K$ that is dug out in this process, that is, $K_2 := \overline{K - K_1}$. Although $K_1$ and $K_2$ are shown separately in Figure \ref{fig:not_tame} for clarity purposes, we remark that $K_2$ actually fills in the hole in $K_1$. Notice that $K_2$ is ambient homeomorphic to the ball of Alexander.

\begin{figure}[h]
\begin{pspicture}(0,0)(12.6,4)
	\rput[bl](0,0){\includegraphics{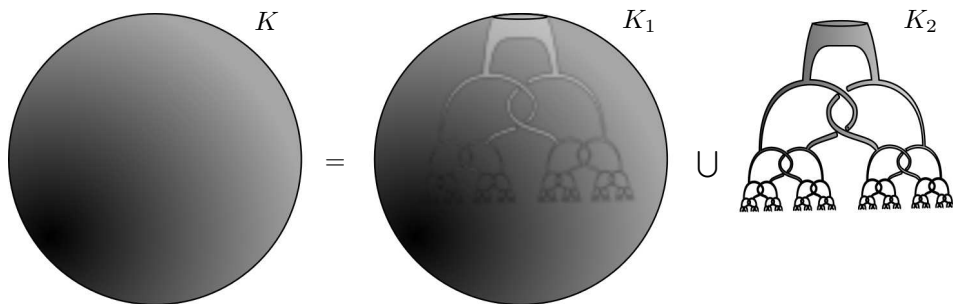}}
	\rput(4.3,1.9){$=$} \rput(9.2,1.9){$\bigcup$}
	\rput(3.4,3.8){$K$} \rput(8.3,3.8){$K_1$} \rput(12,3.8){$K_2$}
\end{pspicture}
\caption{A decomposition of $K$ that is not tame \label{fig:not_tame}}
\end{figure}

The intersection $K_1 \cap K_2$ is precisely ${\rm fr}\ K_2$ (the topological frontier of $K_2$) minus the interior of the disk that lies at its top. It is a consequence of the theorem on invariance of domain that ${\rm fr}\ K_2$ agrees with the boundary of the manifold $K_2$, so ${\rm fr}\ K_2$ is a $2$--sphere and consequently $K_1 \cap K_2$ is a $2$--sphere minus a disk, which is again a disk (this follows from the Sch\"onflies' theorem in the plane).

We have $r(K_1) \geq 0$ by definition, $r(K) = 0$ (this is trivial) and $r(K_2) = \infty$ by Theorem \ref{teo:ball}. Therefore \[\infty = r(K_1) + r(K_2) \not\leq r(K) = 0\]  and so the subadditivity property does not hold for this decomposition.
\end{proof}

\subsection{The sphere of Alexander cannot be an attractor} The boundary of the ball of Alexander $B^*$ is the \emph{sphere of Alexander} $S^*$. We want to show that $S^*$ cannot be an attractor by proving that $r(S^*) = \infty$.

All the work done so far translates verbatim to compact subsets of $\mathbb{S}^3$, rather than $\mathbb{R}^3$. Moreover, if $K \subseteq \mathbb{R}^3$, then $r(K)$ is the same regardless of whether we consider $K$ as a subset of $\mathbb{R}^3$ or $\mathbb{S}^3$. Changing our ambient space from $\mathbb{R}^3$ to $\mathbb{S}^3$ is the most natural setting for what follows.

By a \emph{surface} we understand a compact $2$--manifold without boundary. When $S \subseteq \mathbb{S}^3$ is a \emph{connected} surface, using Alexander duality and Poincar\'e duality we have \[\tilde{H}_0(\mathbb{S}^3-S) = H^2(S) = H_0(S) = \mathbb{Z}_2,\] so $S$ separates $\mathbb{S}^3$ into two complementary domains $U_1$ and $U_2$. The closures $\overline{U}_1, \overline{U}_2$ are compact subsets of $\mathbb{S}^3$, so $r(\overline{U}_1)$ and $r(\overline{U}_2)$ are defined. It turns out that there is a very simple relation among these numbers and $r(S)$:

\begin{proposition} \label{prop:surface} Let $S \subseteq \mathbb{S}^3$ be a connected surface and $U_1$, $U_2$ its complementary domains. Then \[r(S) = r(\overline{U}_1) + r(\overline{U}_2).\]
\end{proposition}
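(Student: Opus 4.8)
The plan is to prove the two inequalities $r(S)\le r(\overline{U}_1)+r(\overline{U}_2)$ and $r(\overline{U}_1)+r(\overline{U}_2)\le r(S)$ separately, each by a Mayer--Vietoris computation in $\mathbb{S}^3$. Everything rests on two elementary facts about the complementary domains: $\overline{U}_1\cup\overline{U}_2=\mathbb{S}^3$ and $\overline{U}_1\cap\overline{U}_2=S$. The first holds because $S$, being a surface in a $3$--manifold, is nowhere dense, so no point of $S$ can be missed by both $\overline{U}_1$ and $\overline{U}_2$; the second amounts to ${\rm fr}\ U_1={\rm fr}\ U_2=S$, which is the content of Lemma \ref{lem:borde} (no circularity, since that lemma does not use the present proposition). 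The other ingredient is purely algebraic: since with $\mathbb{Z}_2$ coefficients $H_1(\mathbb{S}^3)=H_2(\mathbb{S}^3)=0$, whenever $\mathbb{S}^3=A\cup B$ with ${\rm int}\ A\cup{\rm int}\ B=\mathbb{S}^3$ the Mayer--Vietoris sequence collapses to an isomorphism $H_1(A\cap B)\cong H_1(A)\oplus H_1(B)$, so that ${\rm rk}\ H_1(A\cap B)={\rm rk}\ H_1(A)+{\rm rk}\ H_1(B)$.

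For $r(S)\le r(\overline{U}_1)+r(\overline{U}_2)$ one may assume both summands are finite. Given a neighbourhood $W$ of $S$, I would choose polyhedral neighbourhoods $N_1\supseteq\overline{U}_1$ and $N_2\supseteq\overline{U}_2$ with ${\rm rk}\ H_1(N_i)\le r(\overline{U}_i)$ (possible because $\overline{U}_i$ has property $(P_{r(\overline{U}_i)})$) and small enough that $N_1\cap N_2\subseteq W$; this last point is a routine compactness argument, using that $\overline{U}_1-W$ and $\overline{U}_2-W$ are disjoint compacta since $\overline{U}_1\cap\overline{U}_2=S\subseteq W$. Then $N_1\cap N_2$ is a polyhedral neighbourhood of $S$ (it contains $S$ in its interior because $S\subseteq\overline{U}_i\subseteq{\rm int}\ N_i$), while $N_1\cup N_2=\mathbb{S}^3$; so the Mayer--Vietoris isomorphism above gives ${\rm rk}\ H_1(N_1\cap N_2)={\rm rk}\ H_1(N_1)+{\rm rk}\ H_1(N_2)\le r(\overline{U}_1)+r(\overline{U}_2)$. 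Hence $S$ has arbitrarily small $p$--neighbourhoods of first Betti number at most $r(\overline{U}_1)+r(\overline{U}_2)$, i.e.\ property $(P_{r(\overline{U}_1)+r(\overline{U}_2)})$, and the inequality follows.

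For $r(\overline{U}_1)+r(\overline{U}_2)\le r(S)$ one may assume $r(S)<\infty$ and, by Proposition \ref{prop:easy1}({\it ii}\/), fix a $pm$--neighbourhood basis $\{N_k\}$ of $S$ with ${\rm rk}\ H_1(N_k)=r(S)$ for all $k$. From each $N_k$ I would manufacture $pm$--neighbourhoods of $\overline{U}_1$ and $\overline{U}_2$. Since $N_k$ is a polyhedral $3$--manifold containing $S$ in its interior, $\overline{\mathbb{S}^3-N_k}$ is a polyhedral $3$--manifold disjoint from $S$, so each of its finitely many components lies wholly in $U_1$ or in $U_2$; let $P_{k,1}$ and $P_{k,2}$ be the unions of those components lying in $U_1$ and $U_2$ respectively, and set $N_{k,i}:=N_k\cup P_{k,i}$. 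A short verification shows that $N_{k,i}$ is obtained by gluing the $3$--manifolds $N_k$ and $P_{k,i}$ along a union of whole boundary components of each, hence is again a polyhedral $3$--manifold; that $N_{k,i}$ is a neighbourhood of $\overline{U}_i$ (at points of $S$ because $S\subseteq{\rm int}\ N_k$, and at points of $U_i$ because $U_i\subseteq N_k\cup P_{k,i}$ and $U_i$ is open); and that $\{N_{k,i}\}_k$ is a $pm$--neighbourhood basis of $\overline{U}_i$ (given a neighbourhood $V$ of $\overline{U}_i$ one has $P_{k,i}\subseteq U_i\subseteq V$ always, and $N_k\subseteq V$ for large $k$). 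Since $P_{k,1}\cap P_{k,2}=\emptyset$ we get $N_{k,1}\cap N_{k,2}=N_k$ and $N_{k,1}\cup N_{k,2}=\mathbb{S}^3$, so the Mayer--Vietoris isomorphism yields ${\rm rk}\ H_1(N_{k,1})+{\rm rk}\ H_1(N_{k,2})={\rm rk}\ H_1(N_k)=r(S)$ for every $k$. Applying Proposition \ref{prop:easy1}({\it i}\/) to the two bases $\{N_{k,1}\}$ and $\{N_{k,2}\}$,
\[ r(\overline{U}_1)+r(\overline{U}_2)\le\liminf_{k\to\infty}{\rm rk}\ H_1(N_{k,1})+\liminf_{k\to\infty}{\rm rk}\ H_1(N_{k,2})\le\liminf_{k\to\infty}\bigl({\rm rk}\ H_1(N_{k,1})+{\rm rk}\ H_1(N_{k,2})\bigr)=r(S), \]
which is the desired inequality.

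The main obstacle is the bookkeeping in the second half. One must check carefully that $N_k\cup P_{k,i}$ is genuinely a $3$--manifold, which is where one uses that a polyhedral $3$--submanifold of $\mathbb{S}^3$ is locally flat, so that $\overline{\mathbb{S}^3-N_k}$ is itself a polyhedral $3$--manifold meeting $N_k$ precisely along whole boundary components; one must also check that $N_k\cup P_{k,i}$ really is a \emph{neighbourhood} of $\overline{U}_i$ (the key point being that the components of $\overline{\mathbb{S}^3-N_k}$ do not straddle $U_1$ and $U_2$), and that these neighbourhoods shrink to $\overline{U}_i$. None of this is deep, but it is the part that requires care; the Mayer--Vietoris inputs and the frontier facts $\overline{U}_1\cup\overline{U}_2=\mathbb{S}^3$, $\overline{U}_1\cap\overline{U}_2=S$ are routine.
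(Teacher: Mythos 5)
Your proposal is correct, and the overall strategy coincides with the paper's: both inequalities come from a Mayer--Vietoris argument for a decomposition $\mathbb{S}^3=N_1\cup N_2$ with $N_1\cap N_2$ a neighbourhood of $S$, together with $\overline{U}_1\cup\overline{U}_2=\mathbb{S}^3$ and $\overline{U}_1\cap\overline{U}_2=S$ (via Lemma \ref{lem:borde}). Your ($\leq$) half is essentially identical to the paper's (the paper takes $N_i\subseteq\overline{U}_i\cup V$ and sets $N:=N_1\cap N_2$, which is exactly your compactness argument). The one genuine divergence is in the ($\geq$) half: where you manufacture \emph{polyhedral} neighbourhoods of $\overline{U}_i$ by adjoining to $N_k$ the components of $\overline{\mathbb{S}^3-N_k}$ lying in $U_i$ -- the step you rightly flag as the delicate bookkeeping (gluing along whole boundary components, components not straddling $U_1$ and $U_2$) -- the paper simply sets $N_i:=\overline{U}_i\cup N$. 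These sets are not polyhedral (e.g.\ when $S$ is the Alexander sphere), but they are $m$--neighbourhoods, and Proposition \ref{prop:equivalentes} makes that sufficient, so all PL gluing issues evaporate; the paper then concludes directly from the arbitrariness of $V_1,V_2$, implicitly using that sufficiently small neighbourhoods of $\overline{U}_i$ have first Betti number at least $r(\overline{U}_i)$. Your route costs the extra verification that $N_k\cup P_{k,i}$ is a polyhedral $3$--manifold (which does hold, by the same PL facts the paper uses in Lemma \ref{lem:perfora}), but it buys honest $pm$--neighbourhood bases of $\overline{U}_1$ and $\overline{U}_2$, so the final estimate falls out cleanly from Proposition \ref{prop:easy1}({\it i}\/) and the superadditivity of $\liminf$; both arguments are sound.
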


Before proving this result we need Lemma \ref{lem:borde} below. Its proof is an adaptation of an argument contained in the book by Munkres \cite[Theorem 36.3, p. 205]{munkres3}.

\begin{lemma} \label{lem:borde} Let $U_1$ and $U_2$ be the complementary domains of a connected surface $S \subseteq \mathbb{S}^3$. Then their topological frontiers ${\rm fr}\ U_1$ and ${\rm fr}\ U_2$ coincide with $S$.
\end{lemma}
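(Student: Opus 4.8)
The plan is to run the classical proof of the Jordan curve theorem (as recalled before the lemma for $\mathbb{S}^2$), replacing the ingredient ``an arc does not separate $\mathbb{S}^2$'' by the statement ``a compact surface with non-empty boundary does not separate $\mathbb{S}^3$''. The essential observation is that, although $S$ itself need not sit nicely in $\mathbb{S}^3$ (it might be the sphere of Alexander!), the auxiliary pieces used below will be cut out of $S$ along a \emph{coordinate} disk, which is locally flat in $S$, and all that is actually needed about them is that a certain \v{C}ech cohomology group vanishes.

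First I would dispose of the inclusion ${\rm fr}\ U_i \subseteq S$. Both $U_1$ and $U_2$ are open (they are components of the open set $\mathbb{S}^3 - S$ in a locally connected space) and disjoint; hence a point of ${\rm fr}\ U_1 = \overline{U_1} - U_1$ lies neither in $U_1$ nor in $U_2$ (the latter would be an open neighbourhood of it disjoint from $U_1$), so it lies in $\mathbb{S}^3 - (U_1 \cup U_2) = S$. The same works for $U_2$, so only the reverse inclusions $S \subseteq {\rm fr}\ U_1$ and $S \subseteq {\rm fr}\ U_2$ remain.

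For these, fix $x \in S$ and an arbitrary open neighbourhood $U$ of $x$ in $\mathbb{S}^3$; I must exhibit points of $U_1$ and of $U_2$ lying in $U$. Using that $S$ is a $2$--manifold, choose a coordinate disk $D \subseteq S \cap U$ around $x$ and put $S_2 := S - {\rm int}_S D$, which is a compact $2$--manifold with non-empty boundary. Being homotopy equivalent to a graph, $S_2$ satisfies $\check{H}^2(S_2) = 0$, so Alexander duality gives $\tilde{H}_0(\mathbb{S}^3 - S_2) = \check{H}^2(S_2) = 0$: the open set $\mathbb{S}^3 - S_2$ is connected, hence path connected. Pick $p \in U_1$, $q \in U_2$ and join them by a path $\gamma : [0,1] \to \mathbb{S}^3 - S_2$. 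Since $\gamma$ is connected and meets the two disjoint open sets $U_1$ and $U_2$, it cannot stay inside $\mathbb{S}^3 - S = U_1 \cup U_2$, so it meets $S$; as it avoids $S_2$, it in fact meets $S - S_2 = {\rm int}_S D \subseteq U$.

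The heart of the matter is to locate a crossing point that lies in $\overline{U_1}$ and (symmetrically) one in $\overline{U_2}$. For this set $s := \sup\{t : \gamma(t) \in U_1\}$. Using the openness of $U_1$ and $U_2$ one checks that $\gamma(s) \notin U_1 \cup U_2$; since also $\gamma(s) \notin S_2$, we get $\gamma(s) \in S - S_2 = {\rm int}_S D \subseteq U$. On the other hand $\gamma(s)$ is the limit of points $\gamma(t_n)$ with $t_n \nearrow s$ and $\gamma(t_n) \in U_1$, so $\gamma(s) \in \overline{U_1}$; as $U$ is an open neighbourhood of $\gamma(s)$, this forces $U \cap U_1 \neq \emptyset$. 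The same argument with $\inf\{t : \gamma(t) \in U_2\}$ (equivalently, with the rôles of $U_1$ and $U_2$ interchanged) gives $U \cap U_2 \neq \emptyset$. Since $U$ was arbitrary, $x \in \overline{U_1} \cap \overline{U_2}$, and because $x \in S$ lies outside $U_1 \cup U_2$ we conclude $x \in {\rm fr}\ U_1$ and $x \in {\rm fr}\ U_2$. Together with the first inclusion this yields ${\rm fr}\ U_1 = {\rm fr}\ U_2 = S$. I expect the only mildly delicate points to be purely bookkeeping: checking that $\gamma(s) \notin U_1 \cup U_2$ and that $\gamma(s)$ is approached from the left by points of $U_1$ (a routine $\sup$ argument), and checking that $S_2$ really is a manifold with boundary so that $\check{H}^2(S_2)=0$; nothing genuinely topological stands in the way.
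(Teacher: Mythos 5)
Your proof is correct, but it follows a genuinely different route from the one written in the paper. The paper argues globally: it observes that $C := {\rm fr}\ U_1$ separates $\mathbb{S}^3$ (every path from $U_1$ to $U_2$ must cross it), deduces $\check{H}^2(C) \neq 0$ from Alexander duality in $\mathbb{S}^3$, and then applies Alexander duality a second time, now \emph{inside the surface} $S$, to get $H_0(S,S-C) = \check{H}^2(C) \neq 0$, which together with the connectedness of $S$ forces $S - C = \emptyset$. You instead run the classical local Jordan--Brouwer argument (essentially the Munkres proof that the paper says it adapts): remove a coordinate disk $D \subseteq S \cap U$ near the given point, use duality in the opposite direction ($\check{H}^2(S_2)=0$, so $\mathbb{S}^3 - S_2$ is connected), and extract crossing points of a path from $U_1$ to $U_2$ via the $\sup/\inf$ bookkeeping. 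Each approach has its advantages: the paper's proof is shorter and avoids the point-set crossing argument, at the price of invoking duality in $S$ for the pair $(S, S-C)$, where $C$ may be a rather arbitrary compact subset so \v{C}ech cohomology is essential; your proof only uses the Alexander duality statement in $\mathbb{S}^3$ already recorded in the paper, applied to the honest compact manifold-with-boundary $S_2$ (so \v{C}ech equals singular and no wildness issues arise), it shows directly that every point of $S$ lies in the closure of \emph{both} domains, and it generalizes verbatim to $n$-manifolds in $\mathbb{S}^{n+1}$. The one point you should make explicit is that every component of $S_2$ has nonempty boundary -- otherwise a closed component would contribute to $H^2(S_2)$; this is automatic here, since a boundaryless component would be open (invariance of domain) and closed in the connected surface $S$, hence all of $S$, a contradiction. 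With that remark added, your argument is complete.
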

\begin{proof} The only nontrivial part is to show that ${\rm fr}\ U_1$ and ${\rm fr}\ U_2$ are not only subsets of $S$, but that they are all of $S$. To prove this, observe first that the compact set $C := {\rm fr}\ U_1$ separates $\mathbb{S}^3$, for no point $p \in U_1$ can be joined to a point $q \in U_2$ with a path that does not meet $C$. Thus by Spanier's version of Alexander duality \cite[Theorem 17, p. 296]{spanier1} in $\mathbb{S}^3$ we see that $\check{H}^2(C) \neq 0$. Again by Alexander duality, but now in the surface $S$, it follows that $H_0(S,S-C) = \check{H}^2(C) \neq 0$. Since $S$ is connected, this requires that $S - C = \emptyset$, so $C = S$ as we wanted to prove. An analogous argument shows that ${\rm fr}\ U_2 = C$.
\end{proof}

\begin{proof}[Proof of Proposition \ref{prop:surface}] We prove both inequalities.

($\leq$) It will be enough to show that for every neighbourhood $V$ of $S$ there is an $m$--neighbourhood $N$ of $S$ contained in $V$ such that ${\rm rk}\ H_1(N) \leq r(\overline{U}_1) + r(\overline{U}_2)$.

We assume $r(\overline{U}_1), r(\overline{U}_2) < \infty$, for otherwise there is nothing to prove. Since ${\rm fr}\ \overline{U}_1 \subseteq S$ is contained in ${\rm int}\ V$, the set $W_1 := \overline{U}_1 \cup V$ is a neighbourhood of $\overline{U}_1$. Thus there is an $m$--neighbourhood $N_1$ of $\overline{U}_1$ such that $N_1 \subseteq W_1$ and ${\rm rk}\ H_1(N_1) = r(\overline{U}_1)$. Similarly, there is an $m$--neighbourhood $N_2$ of $\overline{U}_2$ such that $N_2 \subseteq W_2 := \overline{U}_2 \cup V$ and ${\rm rk}\ H_1(N_2) = r(\overline{U}_2)$.

Let $N := N_1 \cap N_2$, which is an $m$--neighbourhood of $S$ contained in $W_1 \cap W_2 = V$. Since $\mathbb{S}^3 = N_1 \cup N_2$, there is a Mayer--Vietoris exact sequence \[H_2(\mathbb{S}^3) = 0 \longrightarrow H_1(N) \longrightarrow H_1(N_1) \oplus H_1(N_2),\] whence ${\rm rk}\ H_1(N) \leq {\rm rk}\ H_1(N_1) + {\rm rk}\ H_1(N_2) = r(\overline{U}_1) + r(\overline{U}_2)$.
\smallskip

($\geq$) We assume $r(S) < \infty$, for otherwise there is nothing to prove.

Let $V_1$ and $V_2$ be neighbourhoods of $\overline{U}_1$ and $\overline{U}_2$ respectively. $\overline{U}_1$ and $\overline{U}_2$ both contain $S$ by Lemma \ref{lem:borde}, so $V_1 \cap V_2$ is a neighbourhood of $S$. Thus there exists an $m$--neighbourhood $N$ of $S$ such that $N \subseteq V_1 \cap V_2$ and ${\rm rk}\ H_1(N) = r(S)$. Denote $N_1 := \overline{U}_1 \cup N$ and $N_2 := \overline{U}_2 \cup N$. These are $m$--neighbourhoods of $\overline{U}_1$ and $\overline{U}_2$ respectively. Observe that $N_1 \cap N_2 = N$ and $N_1 \cup N_2 = \mathbb{S}^3$, so from the Mayer--Vietoris sequence \[H_1(N) \longrightarrow H_1(N_1) \oplus H_1(N_2) \longrightarrow H_1(\mathbb{S}^3) = 0\] we see that ${\rm rk}\ H_1(N_1) + {\rm rk}\ H_1(N_2) \leq {\rm rk}\ H_1(N)$. Since $N_1 \subseteq V_1$ and $N_2 \subseteq V_2$, we conclude that $r(\overline{U}_1) + r(\overline{U}_2) \leq r(S)$.
\end{proof}

\begin{theorem} \label{teo:alex} The sphere of Alexander $S^*$ cannot be an attractor.
\end{theorem}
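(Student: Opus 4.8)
\emph{Plan.} The strategy is the same as in every previous non-attractor result: by the finiteness property (Theorem \ref{teo:finite}) it suffices to prove that $r(S^*)=\infty$. All the necessary tools are now available; in fact the proof should be very short, because Proposition \ref{prop:surface} reduces $r$ of a connected surface to the $r$ of its complementary domains, and we have just computed $r(B^*)=\infty$ in Theorem \ref{teo:ball}.

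First I would set things up in $\mathbb{S}^3$ (which, as remarked above, changes nothing). The sphere of Alexander $S^*=\partial B^*$ is homeomorphic to $\mathbb{S}^2$, hence it is a connected surface in the sense used in Proposition \ref{prop:surface}. By that proposition $S^*$ separates $\mathbb{S}^3$ into two complementary domains $U_1$ and $U_2$, and \[r(S^*)=r(\overline{U}_1)+r(\overline{U}_2).\] The key (and only genuinely geometric) observation is the identification of one of these closures: by construction $B^*$ is the closure of one of the complementary domains of the horned sphere $S^*$, with $\partial B^*=S^*$; thus $U_1:=\operatorname{int} B^*$ is one complementary domain and $\overline{U}_1=B^*$. (The other domain is $U_2=\mathbb{S}^3-B^*$, whose closure is the non–simply connected ``bad'' side; we will not need anything about it beyond $r(\overline{U}_2)\ge 0$.)

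Plugging this into the displayed formula gives \[r(S^*)=r(B^*)+r(\overline{U}_2)\ge r(B^*).\] By Theorem \ref{teo:ball} we have $r(B^*)=\infty$, hence $r(S^*)=\infty$. By the finiteness property (Theorem \ref{teo:finite}) a compact set with infinite $r$ cannot be an attractor, so $S^*$ cannot be an attractor, which is what we wanted.

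I do not expect any serious obstacle here: the only point requiring a word of justification is that one complementary domain of $S^*$ really is the interior of the Alexander ball $B^*$, which is immediate from the way $B^*$ and $S^*$ were defined; everything else is a formal consequence of Proposition \ref{prop:surface}, Theorem \ref{teo:ball} and the finiteness property. (One could equally argue directly, mimicking the self-similarity trick of Theorem \ref{teo:ball}: $S^*$ admits a tame-like decomposition into two copies of itself, each ambient homeomorphic to $S^*$; but going through the surface/complement formula is cleaner since the subadditivity hypotheses would need checking.)
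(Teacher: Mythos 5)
Your argument is correct and coincides with the paper's own proof: both identify $B^*$ as the closure of one complementary domain of $S^*$, apply Proposition \ref{prop:surface} to get $r(S^*)\geq r(B^*)=\infty$ from Theorem \ref{teo:ball}, and conclude via the finiteness property. No gaps.
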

\begin{proof} The closure of one of the complementary domains of $S^*$ is precisely the ball of Alexander $B^*$. From Theorem \ref{teo:ball} we know $r(B^*) = \infty$, and then Proposition \ref{prop:surface} implies $r(S^*) \geq r(B^*) = \infty$.
\end{proof}

\section{Comparing $r(K)$ and \v{C}ech cohomology} \label{sec:cech}

The definition of $r(K)$ suggests that $r(K)$ encodes two different pieces of information about $K$: it captures its ``intrinsic complexity'' as measured by ${\rm rk}\ \check{H}^1(K)$ and, in addition, it includes an extra term that accounts for the ``crookedness'' of $K$ as a subset of $\mathbb{R}^3$. On these intuitive grounds we may expect the inequality $r(K) \geq {\rm rk}\ \check{H}^1(K)$ to hold true, and this is actually the case:

\begin{theorem} \label{teo:bound} Let $K$ be a compact subset of $\mathbb{R}^3$. Then $r(K) \geq {\rm rk}\ \check{H}^1(K)$.
\end{theorem}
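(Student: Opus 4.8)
The plan is to combine the description of $r(K)$ through $p$--neighbourhood bases with the continuity property of \v{C}ech cohomology. If $r(K) = \infty$ the inequality is vacuous, so assume $r(K) < \infty$. By Proposition \ref{prop:easy1}.{\it ii} there is a $pm$--neighbourhood basis $\{N_k\}$ of $K$ with ${\rm rk}\ H_1(N_k) = r(K)$ for every $k$, and since $\{N_k\}$ is a neighbourhood basis we may pass to a subsequence and assume in addition that $N_{k+1} \subseteq N_k$ for all $k$, so that $\{N_k\}$ is a decreasing $p$--neighbourhood basis of $K$.

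Next I would move from homology to cohomology. As we work with $\mathbb{Z}_2$ coefficients and each $N_k$ is a polyhedron, $H_1(N_k)$ is a finite-dimensional $\mathbb{Z}_2$--vector space, and the universal coefficient theorem gives $H^1(N_k) = {\rm Hom}(H_1(N_k),\mathbb{Z}_2)$; hence ${\rm rk}\ H^1(N_k) = {\rm rk}\ H_1(N_k) = r(K)$ for every $k$. By the continuity property of \v{C}ech cohomology (the particular instance recalled after Theorem \ref{teo:null}), $\check{H}^1(K)$ is the direct limit of the direct sequence $H^1(N_1) \longrightarrow H^1(N_2) \longrightarrow \ldots$ whose maps are induced by the inclusions $N_{k+1} \subseteq N_k$. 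Thus $\check{H}^1(K)$ is the direct limit of a direct sequence of $\mathbb{Z}_2$--vector spaces each of dimension $r(K)$.

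It remains to observe that the direct limit $G$ of a direct sequence of $\mathbb{Z}_2$--vector spaces of dimension at most $r$ has itself dimension at most $r$. Writing $\psi_k : H^1(N_k) \longrightarrow G$ for the canonical maps, one has ${\rm im}\ \psi_k \subseteq {\rm im}\ \psi_{k+1}$ (because $\psi_k = \psi_{k+1} \circ \varphi_k$) and $G = \bigcup_k {\rm im}\ \psi_k$; each ${\rm im}\ \psi_k$ is a quotient of $H^1(N_k)$ and so has dimension at most $r(K)$, and an increasing union of subspaces of dimension at most $r(K)$ has dimension at most $r(K)$, since any $r(K)+1$ elements of $G$ already lie in a common ${\rm im}\ \psi_k$. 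Therefore ${\rm rk}\ \check{H}^1(K) = \dim_{\mathbb{Z}_2} G \leq r(K)$, as claimed. The argument is essentially formal once the earlier sections are available; the only points requiring a little care are that the maps in the direct system really are the inclusion-induced ones (so that the continuity property applies) and that ranks are preserved when passing from $H_1(N_k)$ to $H^1(N_k)$ via the universal coefficient theorem.
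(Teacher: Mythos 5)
Your proposal is correct and follows essentially the same route as the paper: pass to a nested $pm$--neighbourhood basis with ${\rm rk}\ H_1(N_k) = r(K)$ via Proposition \ref{prop:easy1}.{\it ii}, convert to cohomology by the universal coefficient theorem, apply the continuity property of \v{C}ech cohomology, and bound the dimension of the direct limit. Your inline argument that the limit of vector spaces of dimension at most $r(K)$ has dimension at most $r(K)$ is just a rephrasing of the paper's Lemma \ref{lem:vector}, so there is nothing substantive to add.
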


The proof of this inequality depends on Lemma \ref{lem:vector} below, which is an easy algebraic result. However, it should be noted that it makes essential use (for the only time in this paper) of the fact that we are computing homologies and cohomologies with coefficients in a field and therefore the homology and cohomology groups are actually vector spaces.  Recall that the dimension of a vector space $V$ agrees with its rank when viewed as an abelian additive group.

\begin{lemma} \label{lem:vector} Let \[V_1 \stackrel{\varphi_1}{\longrightarrow} V_2 \stackrel{\varphi_2}{\longrightarrow} \ldots \stackrel{\varphi_{k-1}}{\longrightarrow} V_k \stackrel{\varphi_k}{\longrightarrow} \ldots\] be a direct sequence of vector spaces and let $V$ be its direct limit. If ${\rm dim}\ V_k \leq r < \infty$ for every $k$, then ${\rm dim}\ V \leq r$.
\end{lemma}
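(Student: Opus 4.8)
The plan is to derive this directly from the construction of the direct limit by a short argument by contradiction. The fact I would exploit is the standard description of $V$: every element of $V$ lies in the image of one of the canonical maps $\psi_k : V_k \longrightarrow V$, these images satisfy $\psi_k = \psi_\ell \circ \varphi_{k\ell}$ for $k \le \ell$, and each $\psi_\ell(V_\ell)$, being a linear image of $V_\ell$, has $\dim \psi_\ell(V_\ell) \le \dim V_\ell \le r$.

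The main step is then as follows. Suppose, contrary to the claim, that $\dim V > r$, so that we can choose $r+1$ linearly independent vectors $v_1, \ldots, v_{r+1} \in V$. Writing each $v_i = \psi_{k_i}(u_i)$ with $u_i \in V_{k_i}$ and setting $\ell := \max_i k_i$, the relation $\psi_{k_i} = \psi_\ell \circ \varphi_{k_i \ell}$ shows that $v_i = \psi_\ell(\varphi_{k_i \ell}(u_i)) \in \psi_\ell(V_\ell)$ for every $i$. Hence the single subspace $\psi_\ell(V_\ell) \subseteq V$ contains $r+1$ linearly independent vectors, so $\dim \psi_\ell(V_\ell) \ge r+1$, contradicting the bound $\dim \psi_\ell(V_\ell) \le r$ noted above. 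Therefore $\dim V \le r$. (Equivalently, one could observe that the increasing chain of subspaces $\psi_1(V_1) \subseteq \psi_2(V_2) \subseteq \ldots$ has dimensions that are non-decreasing and bounded by $r$, hence eventually constant, so the chain stabilises and its union $V$ has dimension at most $r$.)

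I do not expect a genuine obstacle here. The only point requiring a little care is to invoke the correct elementary properties of the direct limit — that $V = \bigcup_k \psi_k(V_k)$ and that $\psi_k = \psi_\ell \circ \varphi_{k\ell}$ for $k \le \ell$ — so that finitely many prescribed elements of $V$ can be traced back to a common stage $V_\ell$; the hypothesis $\dim V_k \le r$ is then used only at that single index $\ell$. That we work with vector spaces (coefficients in a field) enters, as the paper remarks, precisely through the transparent inequality $\dim \psi_\ell(V_\ell) \le \dim V_\ell$ for a linear image.
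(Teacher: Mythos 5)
Your argument is correct and is essentially the paper's proof: both trace $r+1$ linearly independent elements of $V$ back to a single stage (the paper lifts the finite set to some $V_k$, you use the compatibility $\psi_{k_i} = \psi_\ell \circ \varphi_{k_i\ell}$ to land them in $\psi_\ell(V_\ell)$) and then contradict the bound $\dim V_\ell \leq r$. The only cosmetic difference is that you derive the contradiction from $\dim \psi_\ell(V_\ell) \leq \dim V_\ell$ rather than from independence of preimages, which is the same idea.
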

\begin{proof} By the definition of direct limit, each $v \in V$ is of the form $\psi_k(u_k)$ for suitable $k$ and $u_k \in V_k$. It is an easy exercise to see that this holds more generally: if $S \subseteq V$ is finite, then $S = \psi_k(U)$ for suitable $k$ and $U \subseteq V_k$.

Now suppose that ${\rm dim}\ V > r$, so there exists a linearly independent set $S \subseteq V$ with $r+1$ elements. Then $S = \psi_k(U)$ for suitable $k$ and $U \subseteq V_k$, and $U$ must contain at least $r+1$ linearly independent elements. This contradicts the hypothesis that ${\rm dim}\ V_k \leq r$.
\end{proof}

\begin{proof}[Proof of Theorem \ref{teo:bound}] If $r(K) = \infty$ there is nothing to prove, so we assume that $r(K) = r < \infty$. By Proposition \ref{prop:easy1}.{\it ii} $K$ has a $pm$--neighbourhood basis $\{N_k\}$ such that ${\rm rk}\ H_1(N_k) = r$ for every $k$, which implies ${\rm rk}\ H^1(N_k) = r$ by the universal coefficient theorem. We may as well assume that $N_{k+1} \subseteq N_k$ for each $k$, and then by the continuity property of \v{C}ech cohomology $\check{H}^1(K)$ is the direct limit of the sequence \[H^1(N_1) \longrightarrow H^1(N_2) \longrightarrow H^1(N_2) \longrightarrow \ldots\] where the bonding maps are induced by inclusions. Then Lemma \ref{lem:vector} implies that ${\rm rk}\ \check{H}^1(K) \leq r = r(K)$.
\end{proof}

Of course, the inequality in Theorem \ref{teo:bound} may be strict: the nonattracting arcs, balls and spheres constructed earlier all have trivial $\check{H}^1$ and infinite $r$. When ${\rm rk}\ \check{H}^1(K) < \infty$ it will be convenient to denote $c(K)$ the nonnegative integer, possibly $\infty$, such that \[r(K) = {\rm rk}\ \check{H}^1(K) + c(K).\]

We shall accept that $c(K)$ quantifies the ``crookedness'' of $K$ as a subset of $\mathbb{R}^3$ much in the same way as $r(K)$ does, but having factored out the contribution due to the ``intrinsic complexity'' of $K$. Clearly $c$ inherits from $r$ both the invariance and the finiteness properties.
\medskip

It is natural to expect that polyhedra should have $c = 0$. This turns out to be true:

\begin{proposition} \label{prop:rpol} Let $K \subseteq \mathbb{R}^3$ be a polyhedron. Then $c(K) = 0$.
\end{proposition}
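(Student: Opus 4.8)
The plan is to establish the two inequalities $r(K) \geq {\rm rk}\ \check{H}^1(K)$ and $r(K) \leq {\rm rk}\ \check{H}^1(K)$ separately; together they give $r(K) = {\rm rk}\ \check{H}^1(K)$, which is exactly the assertion $c(K) = 0$. (Note that ${\rm rk}\ \check{H}^1(K)$ is finite because polyhedra have finitely generated cohomology, so $c(K)$ is well defined in the first place.) The first inequality requires no new work at all: it is precisely Theorem \ref{teo:bound}, which holds for every compact subset of $\mathbb{R}^3$.

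For the reverse inequality I would invoke the theory of regular neighbourhoods recalled in Section \ref{sec:poliedros}. Since $K$ is a polyhedron it has arbitrarily small regular neighbourhoods $N$, each of which is simultaneously a polyhedron and a $3$--manifold (hence a $pm$--neighbourhood) and such that the inclusion $K \subseteq N$ is a homotopy equivalence. In particular $H_1(N) \cong H_1(K)$. Because we work with coefficients in the field $\mathbb{Z}_2$, the universal coefficient theorem gives ${\rm rk}\ H_1(N) = {\rm rk}\ H_1(K) = {\rm rk}\ H^1(K)$; and since $K$ is a polyhedron its \v{C}ech and singular cohomologies coincide, so ${\rm rk}\ H_1(N) = {\rm rk}\ \check{H}^1(K)$.

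Thus $K$ has arbitrarily small $pm$--neighbourhoods $N$ with ${\rm rk}\ H_1(N) = {\rm rk}\ \check{H}^1(K)$; equivalently, property $(P_r)$ of Definition \ref{defn:rdek} holds with $r = {\rm rk}\ \check{H}^1(K)$ (using the $pm$--neighbourhood formulation made legitimate by Proposition \ref{prop:equivalentes}), whence $r(K) \leq {\rm rk}\ \check{H}^1(K)$. Combining this with Theorem \ref{teo:bound} yields $r(K) = {\rm rk}\ \check{H}^1(K)$, i.e. $c(K) = 0$.

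I do not expect any genuine obstacle: the entire argument rests on the single non-trivial input, the existence of arbitrarily small regular neighbourhoods of a polyhedron, which has already been quoted. The only point that needs a moment's care is the bookkeeping with coefficients — ensuring the chain of equalities between ${\rm rk}\ H_1(N)$, ${\rm rk}\ H^1(N)$, ${\rm rk}\ H^1(K)$ and ${\rm rk}\ \check{H}^1(K)$ is valid — but over the field $\mathbb{Z}_2$ this is entirely routine.
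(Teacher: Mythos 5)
Your proposal is correct and follows essentially the same route as the paper: the paper also takes arbitrarily small regular neighbourhoods $N_k$ of $K$, uses the fact that the inclusion $K \subseteq N_k$ is a homotopy equivalence to get ${\rm rk}\ H_1(N_k) = {\rm rk}\ \check H^1(K)$, deduces $r(K) \leq {\rm rk}\ \check H^1(K)$, and combines this with Theorem \ref{teo:bound}. Your extra care with the $\mathbb{Z}_2$-coefficient bookkeeping is fine and matches what the paper leaves implicit.
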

\begin{proof} It is well known that ${\rm rk}\ \check{H}^1(K) = {\rm rk}\ H^1(K) < \infty$ for a poyhedron, so $c(K)$ is defined. Let $\{N_k\}$ be a $pm$--neighbourhood basis of $K$ whose members are all regular neighbourhoods of $K$. Each inclusion $K \subseteq N_k$ is a homotopy equivalence so ${\rm rk}\ \check{H}^1(N_k) = {\rm rk}\ \check{H}^1(K)$ for every $k$, which easily implies $r(K) \leq {\rm rk}\ \check{H}^1(K)$. Together with Theorem \ref{teo:bound} this shows that $r(K) = {\rm rk}\ \check{H}^1(K)$ and $c(K) = 0$.
\end{proof}

Another interesting class of sets for which $c = 0$ is that of attractors for \emph{flows}.

\begin{proposition} \label{prop:at_poly} Let $K \subseteq \mathbb{R}^3$ be an attractor for a \emph{flow}. Then $c(K) = 0$.
\end{proposition}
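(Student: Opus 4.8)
The plan is to combine Theorem \ref{teo:bound}, which already gives $r(K) \geq {\rm rk}\ \check{H}^1(K)$, with a proof of the reverse inequality $r(K) \leq {\rm rk}\ \check{H}^1(K)$; together these force $r(K) = {\rm rk}\ \check{H}^1(K)$, that is, $c(K) = 0$ (and in particular ${\rm rk}\ \check{H}^1(K) < \infty$, so that $c(K)$ is defined). The point where the hypothesis that $K$ is an attractor for a \emph{flow}, rather than merely for a homeomorphism, becomes essential is precisely the reverse inequality: flows come equipped with a converse Lyapunov theorem that produces especially well-behaved neighbourhoods of $K$, something not available for discrete systems (indeed $c$ need not vanish for general attractors, by the examples of the previous sections).

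The key input I would use is the following known fact about attractors for flows (see e.g. \cite{gunthersegal1}, \cite{garay1}, \cite{mio3}): an attractor $K$ for a flow $\varphi$ has a neighbourhood basis $\{N_k\}$ consisting of compact $3$--manifolds with boundary such that each inclusion $N_{k+1} \subseteq N_k$ is a homotopy equivalence. Concretely one takes a smooth proper Lyapunov function $g : \mathcal{A}(K) \longrightarrow [0,\infty)$ with $g^{-1}(0) = K$ that is strictly decreasing along the orbits of $\varphi$ in $\mathcal{A}(K) - K$, chooses by Sard's theorem a decreasing sequence of regular values $c_1 > c_2 > \ldots \to 0$, and sets $N_k := g^{-1}([0,c_k])$. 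Each $N_k$ is a compact $3$--manifold with boundary $g^{-1}(c_k)$, and since $g$ tends to $0$ along every forward orbit in $\mathcal{A}(K)$ while decreasing strictly, the shell $g^{-1}([c_{k+1},c_k])$ is swept out by the flow and is a collar of its boundary; hence $N_k$ deformation retracts onto $N_{k+1}$, so the inclusion $N_{k+1} \subseteq N_k$ is a homotopy equivalence. Since the $N_k$ are compact with $\bigcap_k N_k = K$, they form a neighbourhood basis of $K$.

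Granting this, set $r := {\rm rk}\ H_1(N_1)$, which is finite because $N_1$ is a compact manifold. As all the inclusions $N_{k+1}\subseteq N_k$ are homotopy equivalences we get ${\rm rk}\ H_1(N_k) = r$ for every $k$, and the direct system $H^1(N_1) \longrightarrow H^1(N_2) \longrightarrow \ldots$ induced by the inclusions consists of isomorphisms; by the continuity property of \v{C}ech cohomology \cite{munkres3} its direct limit is $\check{H}^1(K)$, so ${\rm rk}\ \check{H}^1(K) = {\rm rk}\ H^1(N_1) = {\rm rk}\ H_1(N_1) = r$, where the middle equality uses that over the field $\mathbb{Z}_2$ the rank of $H^1$ equals the rank of $H_1$. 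Thus $\{N_k\}$ is an $m$--neighbourhood basis of $K$ with ${\rm rk}\ H_1(N_k) = {\rm rk}\ \check{H}^1(K)$ for all $k$, so $K$ has property $(P_r)$ with $r = {\rm rk}\ \check{H}^1(K)$ (using the $m$--neighbourhood characterisation of $r(K)$ from Proposition \ref{prop:equivalentes}), and therefore $r(K) \leq {\rm rk}\ \check{H}^1(K)$. Combined with Theorem \ref{teo:bound} this yields $r(K) = {\rm rk}\ \check{H}^1(K)$ and hence $c(K) = 0$.

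The step I expect to require the most care — and the only one in which the flow hypothesis is genuinely used — is the construction of the neighbourhood basis $\{N_k\}$ of compact $3$--manifolds with all inclusions homotopy equivalences. If one prefers not to invoke it as a black box, the delicate points to verify are the converse Lyapunov theorem (existence, smoothness and properness of $g$, so that regular sublevel sets are genuine submanifolds) and the claim that $g^{-1}([c_{k+1},c_k])$ is a collar, which amounts to the absence of recurrence of $\varphi$ in $\mathcal{A}(K) - K$. Everything after that is routine algebra with direct limits over a field together with the tools already developed in the paper.
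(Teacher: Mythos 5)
Your proof is correct and its skeleton is exactly the paper's: establish the reverse inequality $r(K) \leq {\rm rk}\ \check{H}^1(K)$ from the existence of especially good manifold neighbourhoods of a flow attractor, then combine with Theorem \ref{teo:bound} to get $c(K)=0$. The only real difference is how the key input is justified. The paper simply quotes \cite{mio3} (Proposition 6 and Remark 9 there) for the fact that $K$ has arbitrarily small $m$--neighbourhoods $P$ with $K \subseteq P$ inducing isomorphisms in \v{C}ech cohomology, plus the finiteness property to know $c(K)$ is defined; you instead derive a manifold neighbourhood basis with homotopy-equivalence inclusions from a smooth proper Lyapunov function and Sard's theorem. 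Two caveats about your derivation. First, the flows in this paper are merely continuous, so a \emph{smooth} Lyapunov function (needed for Sard and for regular level sets to be surfaces) is not automatic: converse Lyapunov theorems for continuous flows produce continuous functions, and smoothing them while keeping strict decrease along orbits is genuinely delicate -- you flag this yourself, and your fallback of invoking the literature as a black box is precisely what the paper does, so nothing is lost, but the Lyapunov sketch should not be presented as routine. Second, the continuity property of \v{C}ech cohomology is stated in the paper for \emph{polyhedral} neighbourhood bases; since your $N_k$ are smooth manifolds you should either pass to polyhedral copies (Moise, as in Proposition \ref{prop:equivalentes}) or note that \v{C}ech and singular cohomology agree on compact manifolds, so that the direct limit argument applies verbatim. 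With those two points addressed, your argument gives ${\rm rk}\ H_1(N_k) = {\rm rk}\ \check{H}^1(K)$ for arbitrarily small manifold neighbourhoods, hence $r(K) \leq {\rm rk}\ \check{H}^1(K)$, and the conclusion follows just as in the paper.
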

\begin{proof} Since $K$ is also an attractor for the time-one map of the flow, by the finiteness property $r(K) < \infty$ so $c(K)$ is defined. $K$ has arbitrarily small $m$--neighbourhoods $P$ such that the inclusion $K \subseteq P$ induces isomorphisms in Cech cohomology \cite[Proposition 6 and Remark 9, p. 6168]{mio3} which implies $r(K) \leq {\rm rk}\ \check{H}^1(K)$. Together with Theorem \ref{teo:bound} this proves that $c(K) = 0$.
\end{proof}

Proposition \ref{prop:at_poly} can be used to show that certain sets which are attractors for homeomorphisms cannot be attractors for flows. This is the case of the dyadic solenoid and the Whitehead continuum, as we now describe.
\smallskip

{\it The dyadic solenoid.} Let $T \subseteq \mathbb{R}^3$ be a solid torus, and let $f : \mathbb{R}^3 \longrightarrow \mathbb{R}^3$ be a homeomorphism such that $f(T)$ winds twice inside $T$ as shown in Figure \ref{fig:solenoid}. The set $K := \bigcap_k f^k(T)$ is known as the \emph{dyadic solenoid}, and it is an attractor for $f$ by its very construction.

\begin{figure}[h]
\begin{pspicture}(0,0)(8.4,3)
	\rput[bl](0,0){\scalebox{0.7}{\includegraphics{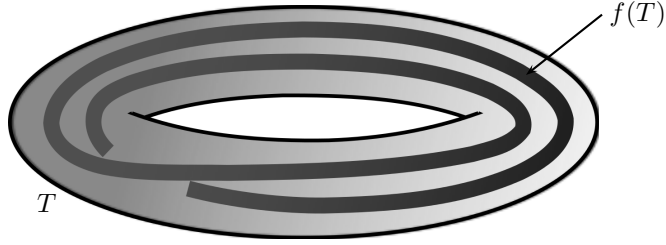}}}
	\rput(0.5,0.5){$T$}
	\psline{->}(7.8,3)(6.8,2.2) \rput[l](7.9,3){$f(T)$}
\end{pspicture}
\caption{Constructing the dyadic solenoid \label{fig:solenoid}}
\end{figure}

\begin{example} \label{ejem:solenoid} The dyadic solenoid $K$ has $c(K) = 1$. Thus by Proposition \ref{prop:at_poly} it cannot be an attractor for a flow, although it is an attractor for a homeomorphism.
\end{example}
\begin{proof} For each $k$ let $N_k$ be the solid torus $f_k(T)$. Clearly $\{N_k\}$ is an $m$--neighbourhood basis of $K$, so $r(K) \leq 1$ because ${\rm rk}\ H_1(N_k) = 1$ for every $k$. Using the continuity property of \v{C}ech cohomology it is very easy to see that $\check{H}^d(K;\mathbb{Z}_2) = 0$ for every $d \geq 1$ but $\check{H}^1(K;\mathbb{Z}) \neq 0$. The latter implies that $K$ is not cellular, and then by the nullity property $r(K) \geq 1$. It follows that $r(K) = 1$ and $c(K) = 1$ too.
\end{proof}

{\it The Whitehead continuum.} Let $T \subseteq \mathbb{R}^3$ be a solid torus, and let $f : \mathbb{R}^3 \longrightarrow \mathbb{R}^3$ be a homeomorphism such that $f(T)$ lies inside $T$ as shown in Figure \ref{fig:wh}. The set $K := \bigcap_k f^k(T)$ is known as the \emph{Whitehead continuum}, and it is an attractor for $f$ by its very construction.

\begin{figure}[h]
\begin{pspicture}(0,0)(8.4,3)
	\rput[bl](0,0){\scalebox{0.7}{\includegraphics{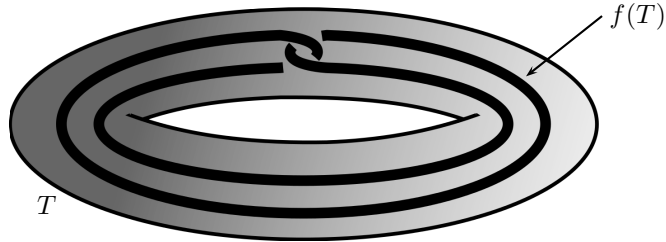}}}
	\rput(0.5,0.5){$T$}
	\psline{->}(7.8,3)(6.8,2.2) \rput[l](7.9,3){$f(T)$}
\end{pspicture}
\caption{Constructing the Whitehead continuum \label{fig:wh}}
\end{figure}

\begin{example} \label{ejem:wh} The Whitehead continuum $K$ has $c(K) = 1$. Thus by Proposition \ref{prop:at_poly} it cannot be an attractor for a flow, although it is an attractor for a homeomorphism.
\end{example}
\begin{proof} As in Example \ref{ejem:solenoid}, $r(K) \leq 1$. Each torus $f^{k+1}(T)$ is nullhomotopic inside the previous one $f^k(T)$, so $\check{H}^d(K;\mathbb{Z}) = 0$ for every $d \geq 1$. The Whitehead continuum is not cellular \cite[p. 156]{hempel1}, so by the nullity property $r(K) = 1$. Thus $c(K) = 1$.
\end{proof}

If the reader is familiar with shape theory \cite{borsukshape1} he may want to consider the following remark. There is a result of G\"unther and Segal \cite[Corollary 1, p. 324]{gunthersegal1} which states that every attractor for a flow on a manifold has the shape of a (finite) polyhedron, so in particular it has finitely generated \v{C}ech cohomology in every dimension and for every coefficient group (this holds true even if attractors are allowed to be mildly unstable \cite{moronsanjurjoyo1,mio1}). Since the dyadic solenoid has nonfinitely generated $\check{H}^1(K;\mathbb{Z})$, it follows that it cannot be an attractor for a flow \cite[Example 3, p. 325]{gunthersegal1} which agrees with Example \ref{ejem:solenoid}. Conversely, G\"unther and Segal also show that a (finite dimensional, metrizable) compact set having the shape of a finite polyhedron can be embedded in some Euclidean space in such a way that it is an attractor for a suitable flow \cite[Theorem 2, p. 327]{gunthersegal1}. The Whitehead continuum \emph{does} have the shape of a finite polyhedron (it has the shape of a point), but according to Example \ref{ejem:wh} it cannot be an attractor in $\mathbb{R}^3$. However, if $K$ is thought of as a subset of $\mathbb{R}^4$ (identifying $\mathbb{R}^3$ with $\mathbb{R}^3 \times \{0\} \subseteq \mathbb{R}^4$) then it becomes cellular and it \emph{is} an attractor for a suitable flow in $\mathbb{R}^4$. This shows how shape theory, which is a topological invariant and cannot discriminate between $K \subseteq \mathbb{R}^3$ and $K \subseteq \mathbb{R}^4$, cannot be used to analyse Example \ref{ejem:wh}.

\section{Final remarks and open questions} \label{sec:final}

We finish this paper by stating some open questions.

\subsection{} Of course,

\smallskip
{\bf Question 1.} Solve the characterization problem for $M = \mathbb{R}^3$.
\smallskip

\noindent or even more ambitiously, solve the characterization problem when the dynamics on the attractor is given:

\smallskip
{\bf Question 2.} Assume $K \subseteq \mathbb{R}^3$ is a compact set on which a homeomorphism $h : K \longrightarrow K$ is defined. Characterize when it is possible to extend $h$ to a homeomorphism $\hat{h} : \mathbb{R}^3 \longrightarrow \mathbb{R}^3$ having $K$ as an attractor.
\smallskip

There is a very interesting situation, related to the theory of partial differential equations (PDEs), in which this question arises naturally. A PDE generates a flow or semiflow on an infinite-dimensional phase space, but many of them (especially those with a physical motivation) have finite-dimensional attractors. A copy $K$ of such an attractor can be embedded together with its dynamics in some finite dimensional Euclidean space $E$, and the question arises whether there exists a flow in $E$ having $K$ as an attractor and reproducing its dynamics. This can be considered a ``parametric version'' of the extension problem posed above (with $\mathbb{R}^3$ replaced by $E$). Heuristically, if such a flow exists then a finite number of variables suffices to describe completely the long term behaviour of the original system modelled by the PDE. The question was first studied by Eden, Foias, Nicolaenko and Temam \cite{eden1} as well as Romanov \cite{romanov1} but only later \cite{pintorobinsonyo1, robinson1, robinsonyo1} it was noticed that the way $K$ is embedded in $E$ needs to be carefully controlled; it also became clear that the mathematical language needed to describe how $K$ should sit in $E$ is still to be developed. We hope that the ideas introduced in the present paper provide a starting point for this task.

\subsection{} Suppose $K$ is an attractor for $f$. Let $N \subseteq \mathcal{A}(K)$ be an $m$--neighbourhood of $K$. Since $K$ attracts $N$, there exists $n_0 \in \mathbb{N}$ such that $f^{n_0}(N) \subseteq {\rm int}\ N$. Letting $N_k := f^{kn_0}(N)$ we obtain (as in Lemma \ref{lem:trivial}) a \emph{decreasing} $m$--neighbourhood basis of $K$ all of whose members are homeomorphic to each other, but with the additional property that $f^k$ provides a homeomorphism from the pair $(N_0,N_1)$ onto the pair $(N_k,N_{k+1})$. Less technically stated, $\{N_k\}$ has the property that each $N_{k+1}$ lies in $N_k$ in the same way as $N_1$ lies in $N_0$.

\smallskip
{\bf Question 3.} Is it possible to refine the definition of $r(K)$ in such a way that it accounts for the above observation?
\smallskip

Let us include a specific example to illustrate this question. Denote $\nu_k$ the sequence of prime numbers $2,3,5,7,11,13,\ldots$ Consider a solid torus $T_0 \subseteq \mathbb{R}^3$. Place in its interior another solid torus $T_1$ that winds $\nu_1 = 2$ times around $T_0$. Then place in the interior of $T_1$ another solid torus $T_2$ that winds $\nu_2 = 3$ times around $T_1$, and so on. Repeating this construction one obtains a nested sequence of tori $\{T_k\}$, each winding $\nu_k$ times around the previous one, whose intersection is a compact set $K$. Clearly $r(K) \leq 1 < \infty$, so $r$ does not rule out the possibility that $K$ is an attractor, but our intuition tells us that it cannot because there is no repeating pattern in how each $T_{k+1}$ lies in the previous $T_k$. A beautiful result of G\"unther \cite[Theorem 1, p. 653]{gunther1} confirms this intuition by an ingenious examination of the structure of $\check{H}^1(K;\mathbb{Z})$.

\subsection{} Throughout the paper we have concentrated on discrete dynamical systems, but the characterization problem makes perfect sense also in the context of continuous dynamical systems: ``characterize topologically what compact sets $K \subseteq \mathbb{R}^3$ can be attractors for \emph{flows} on $\mathbb{R}^3$''. Being an attractor for a flow is much more restrictive than being an attractor for a homeomorphism (Examples \ref{ejem:solenoid} and \ref{ejem:wh} illustrate this), which makes this version of the characterization problem easier to deal with. It can be answered in the following terms:
\smallskip

{\it Theorem.}  \cite[Theorem 11, p. 6169]{mio3} A compactum $K \subseteq \mathbb{R}^3$ is an attractor for a flow if and only if there exists a polyhedron $P \subseteq \mathbb{R}^3$ such that $\mathbb{S}^3 - K$ is homeomorphic to $\mathbb{S}^3 - P$.
\smallskip

The interesting point to be observed here is that whether $K$ is an attractor for a flow depends \emph{only} on $\mathbb{S}^3 - K$. 

\smallskip
{\bf Question 4.} Suppose $K, K' \subseteq \mathbb{R}^3$ are compacta such that $\mathbb{S}^3 - K$ and $\mathbb{S}^3 - K'$ are homeomorphic. Is it true that if $K$ is an attractor for a homeomorphism, then so must be $K'$?
\smallskip

If the answer to this question is negative (that is, if there exist compacta with homeomorphic complements in $\mathbb{S}^3$, one being an attractor and the other one not) then the characterization problem cannot be solved in terms of invariants such as $r(K)$, for we saw earlier that these depend \emph{only} on $\mathbb{S}^3 - K$ (Theorem \ref{teo:invs}). 

\subsection{} If the reader is acquainted with the notions of \emph{wild} and \emph{tame} sets from geometric topology he might have been reminded of them by the overall flavour of the paper. Recall that a compact set $K \subseteq \mathbb{R}^3$ is called \emph{tame} if there exist a polyhedron $P \subseteq \mathbb{R}^3$ and a homeomorphism $h : \mathbb{R}^3 \longrightarrow \mathbb{R}^3$ such that $h(K) = P$; otherwise $K$ is said to be \emph{wild}. It is almost unavoidable to wonder whether there is some relation between these concepts and the characterization problem. One has the following result:

\smallskip
{\it Proposition.} \cite[Proposition 12, p. 6169]{mio3} Every tame set is an attractor for a flow; hence also for a homeomorphism (namely, the time-one map of the flow).
\smallskip

This was essentially proved by G\"unther and Segal \cite[Corollary 4, p. 327]{gunthersegal1}. It explains why all our examples of nonattracting sets are wild, although one should not be misled to think that no wild set can be an attractor: for instance, the dyadic solenoid is wild but it is an attractor, and it is even possible to construct wild arcs that are attractors \cite[Example 38, p. 6177]{mio3}.
\smallskip

The notions of tameness and wildness have local versions when applied to arcs or spheres. An arc $A \subseteq \mathbb{R}^3$ is \emph{locally tame} at a point $p \in A$ if there exist a neighbourhood $U$ of $p$ in $\mathbb{R}^3$ and a homeomorphism $h : \mathbb{R}^3 \longrightarrow \mathbb{R}^3$ such that $h(U \cap A)$ is contained in the $x$--axis. Similarly, a sphere $S \subseteq \mathbb{R}^3$ is locally tame at a point $p \in S$ if there exist a neighbourhood $U$ of $p$ in $\mathbb{R}^3$ and a homeomorphism $h : \mathbb{R}^3 \longrightarrow \mathbb{R}^3$ such that $h(U \cap S)$ is contained in the $xy$--plane. In both cases, the set of points at which the given set $K$ is tame is an open subset of $K$; its complement is a compact subset of $K$ called the \emph{wild} set of $K$.

\smallskip
{\bf Question 5.} Is it possible to characterize when an arc or a sphere is an attractor in terms of properties of their wild sets?
\smallskip

\subsection{} We used the Fox--Artin arc $F$ shown in Figure \ref{fig:wild_2} as a basis for our construction of an arc that cannot be an attractor. It is fairly easy to construct a neighbourhood basis of $F$ comprised of double tori, so $r(F) \leq 2$. We also know that $r(F) \geq 1$ because $F$ is not cellular.

\smallskip
{\bf Question 6.} What is $r(F)$? Can $F$ be an attractor?
\smallskip

\section*{Appendix: Alexander duality for manifolds with boundary}

\begin{lemma} Let $N$ be a compact $3$--manifold and $S \subseteq {\rm int}\ N$ a compact set. Then $H_d(N,S) = H_{3-d}(N-S,\partial N)$.
\end{lemma}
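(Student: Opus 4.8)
The plan is to prove the general form of the statement, in which the left-hand side is \v{C}ech cohomology, $\check{H}^d(N,S) = H_{3-d}(N-S,\partial N)$; when $S$ is a polyhedron or a compact manifold its \v{C}ech cohomology agrees with ordinary cohomology, and over $\mathbb{Z}_2$ the universal coefficient theorem identifies $H^d(N,S)$ with $H_d(N,S)$, so this recovers the displayed statement. I would organise the argument in two stages: first establish the isomorphism when $S$ is itself a compact polyhedral $3$--manifold $V \subseteq {\rm int}\ N$, and then obtain the general case by approximating $S$ by such $V$'s and passing to a direct limit.

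For the polyhedral case, write $N' := \overline{N-V}$, which is a compact polyhedral $3$--manifold whose boundary is the disjoint union $\partial N' = \partial N \sqcup \partial V$ (disjoint because $V \subseteq {\rm int}\ N$ is a neighbourhood of each of its points). First I would use excision: thickening $V$ by a collar of $\partial V$ inside $N'$ produces a neighbourhood $V^+$ of $V$ in $N$ with $V \hookrightarrow V^+$ a homotopy equivalence, so $H^d(N,V) = H^d(N,V^+)$; excising ${\rm int}\ V$ from $(N,V^+)$ — legitimate since $V \subseteq {\rm int}\ V^+$ — and retracting the leftover collar onto $\partial V$ gives $H^d(N,V^+) = H^d(N',\partial V)$. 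Then I would apply Poincar\'e--Lefschetz duality to the compact $3$--manifold $N'$ with its boundary split as $\partial V \sqcup \partial N$ (no orientation is needed with $\mathbb{Z}_2$ coefficients) to obtain $H^d(N',\partial V) = H_{3-d}(N',\partial N)$. Chaining these, $H^d(N,V) = H_{3-d}(\overline{N-V},\partial N)$, and both isomorphisms are natural for inclusions of such $V$'s.

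For the general case, choose a decreasing sequence $V_1 \supseteq V_2 \supseteq \ldots$ of compact polyhedral $3$--manifold neighbourhoods of $S$ with $V_{k+1} \subseteq {\rm int}\ V_k$ and $\bigcap_k V_k = S$ (regular neighbourhoods do). On the cohomology side, comparing the long exact sequences of the pairs $(N,V_k)$ and $(N,S)$ and using the continuity property of \v{C}ech cohomology (which gives $\varinjlim_k H^d(V_k) = \check{H}^d(S)$) together with $\check{H}^d(N) = H^d(N)$ (as $N$ is a manifold), the five lemma yields $\check{H}^d(N,S) = \varinjlim_k H^d(N,V_k)$. On the homology side, the compact manifolds $\overline{N-V_k} = N - {\rm int}\ V_k$ increase to $N-S$, and any compact subset of $N-S$ — in particular the image of any singular simplex — misses some $V_k$ by the finite intersection property; since every $\overline{N-V_k}$ contains $\partial N$, this gives $C_*(N-S)/C_*(\partial N) = \varinjlim_k C_*(\overline{N-V_k})/C_*(\partial N)$ and hence $H_{3-d}(N-S,\partial N) = \varinjlim_k H_{3-d}(\overline{N-V_k},\partial N)$. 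Passing to the limit in the natural isomorphisms of the polyhedral stage then finishes the proof. (Alternatively, one could bypass the approximation by applying Poincar\'e--Lefschetz duality with compact supports directly to the non-compact manifold $N-S$, $H^d_c(N-S) = H_{3-d}(N-S,\partial N)$, and then identifying $H^d_c(N-S)$ with $\check{H}^d(N,S)$ via the compactly-supported cohomology sequence of the open set $N-S \subseteq N$; but that presupposes familiarity with compactly-supported cohomology.)

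The step I expect to require the most care is the passage to the limit: one must verify that the excision and Lefschetz-duality isomorphisms of the polyhedral stage are genuinely natural with respect to the bonding maps $H^d(N,V_k)\to H^d(N,V_{k+1})$ and $H_{3-d}(\overline{N-V_k},\partial N)\to H_{3-d}(\overline{N-V_{k+1}},\partial N)$ — the one delicate point being the compatibility, under restriction, of the $\mathbb{Z}_2$--fundamental classes of the nested manifolds $\overline{N-V_k} \subseteq \overline{N-V_{k+1}}$ — and that the \v{C}ech-continuity identification on the cohomology side and the chain-level exhaustion on the homology side are carried out over the same cofinal family of neighbourhoods. Everything else is a routine diagram chase.
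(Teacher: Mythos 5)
Your argument is correct, but it follows a genuinely different route from the paper's. The paper does not approximate $S$ at all: it uses a collar of $\partial N$ to shrink $N$ to a compact manifold $N^* \subseteq {\rm int}\ N$ with $S \subseteq {\rm int}\ N^*$, observes via the exact sequence of the triple $(N,N^*,S)$ that $\check{H}^d(N,S) \cong \check{H}^d(N^*,S)$, and then simply quotes Spanier's version of Alexander duality for the compact pair $(N^*,S)$ inside the boundariless manifold ${\rm int}\ N$, translating the resulting pair back to $(N-S,\partial N)$ with the collar and the homotopy equivalence $\partial N \subseteq N-N^*$. What you do instead --- excision plus $\mathbb{Z}_2$ Poincar\'e--Lefschetz duality on the complement of a polyhedral manifold neighbourhood $V$ of $S$, followed by a double passage to the limit (\v{C}ech continuity in cohomology, compact supports in singular homology) over a nested sequence $V_k$ shrinking to $S$ --- is essentially the standard proof of Spanier's duality theorem specialised to the present situation. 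Your version buys self-containedness: it only uses tools already deployed elsewhere in the paper (polyhedral manifold neighbourhoods as in Lemma~\ref{lem:easy}, Lefschetz duality, \v{C}ech continuity) and it isolates the real content, namely the compatibility of the $\mathbb{Z}_2$ fundamental classes of the nested complements, which you rightly flag as the delicate naturality check. The paper's version buys brevity by outsourcing exactly that limit-and-naturality work to Spanier's theorem. Two small imprecisions in your write-up, neither fatal: $S$ need not be a polyhedron, so it has no regular neighbourhoods as such --- the $V_k$ should be regular neighbourhoods of simplicial neighbourhoods of $S$ in finer and finer subdivisions (this is how one proves Lemma~\ref{lem:easy}), and if $N$ is an abstract compact $3$--manifold rather than a polyhedron in $\mathbb{R}^3$ this presupposes a triangulation of $N$ (Moise); and the assertion that $\overline{N-V}$ is a compact $3$--manifold with boundary $\partial N \sqcup \partial V$ needs that a PL surface in a $3$--manifold is locally flat, hence bicollared, not merely that the two boundary pieces are disjoint.
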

\begin{proof} $\partial N$ is collared in $N$ \cite{connelly1}. This means that there exists an embedding $c : \partial N \times [0,1] \longrightarrow N$ such that $c(p,0) = p$ for every $p \in \partial N$. Since $S$ is a compact subset of ${\rm int}\ N$, there exists $\varepsilon > 0$ such that $c(\partial N \times [0,\varepsilon])$ is disjoint from $S$. The theorem on invariance of domain guarantees that $U := c(\partial N \times [0,\varepsilon))$ is an open subset of $N$, and it is not difficult to check that $N^* := N - U$ is a compact $3$--manifold contained in ${\rm int}\ N$ and containing $S$ in its interior. The inclusions $N^* \subseteq N$ and $\partial N \subseteq N - N^*$ are both homotopy equivalences.

The cohomology exact sequence for the triple $(N,N^*,S)$ and the fact that $\check{H}^*(N,N^*) = 0$ imply that $\check{H}^d(N,S) = \check{H}^d(N^*,S)$. Using Spanier's version of Alexander duality \cite[Theorem 17, p. 296]{spanier1} for the compact pair $(N^*,S)$ in the interior of the boundariless manifold ${\rm int}\ N$ we have \[\check{H}^d(N^*,S) = H_{3-d}(({\rm int}\ N) - S, ({\rm int}\ N) - N^*).\] An easy exercise involving the collar $c$ shows that $H_{3-d}(({\rm int}\ N) - S,({\rm int}\ N) - N^*) = H_{3-d}(N-S,N-N^*)$ so \[\check{H}^d(N^*,S) = H_{3-d}(N-S,N-N^*),\] which readily implies that \[\check{H}^d(N,S) = H_{3-d}(N-S,\partial N).\]
\end{proof}

\bibliographystyle{plain}
\bibliography{biblio}

\end{document}